\theoremstyle{plain}
\newtheorem*{lemma*}{Lemma}
\newtheorem{lemma}[subsection]{Lemma}
\newtheorem*{theorem*}{Theorem}
\newtheorem{theorem}[subsection]{Theorem}
\newtheorem*{proposition*}{Proposition}
\newtheorem{proposition}[subsection]{Proposition}
\newtheorem*{corollary*}{Corollary}
\newtheorem{corollary}[subsection]{Corollary}
\theoremstyle{definition}
\newtheorem*{definition*}{Definition}
\newtheorem{definition}[subsection]{Definition}
\newtheorem*{example*}{Example}
\newtheorem{example}[subsection]{Example}
\theoremstyle{remark}
\newtheorem*{remark*}{Remark}
\newtheorem{remark}[subsection]{Remark}
\newcommand{\R}{{\mathbb R}}
\newcommand{\CV}{{\mathcal{V}}}
\newcommand{\CE}{{\mathcal{E}}}
\newcommand{\g}{\mathfrak{g}}
\newcommand{\PW}{{\mathcal {P}}{\mathcal {W}}}
\newcommand{\C}{{\mathbb C}}
\newcommand{\Z}{{\mathbb Z}}
\renewcommand{\ll}{{\langle}}
\newcommand{\rr}{{\rangle}}
\newcommand{\ch}{{\mathrm{ch}}}
\newcommand{\Cone}{{\mathrm{Cone}}}
\newcommand{\Tr}{{\mathrm{Tr}}}
\newcommand{\Bott}{{\mathrm{Bott}}}
 \newcommand{\Thom}{{\mathrm{Thom}}}
\newcommand{\infdex}{{\mathrm{infdex}}}
 \renewcommand{\index}{{\mathrm{index}}}
\newcommand{\Ker}{{\mathrm{Ker}}}
\newcommand{\Coker}{{\mathrm{Coker}}}
 \newcommand{\ind}{{\mathrm{ind}}}
\newcommand{\CL}{{\mathcal{L}}}
 \renewcommand{\c}{{\mathfrak{c}}}
\title[Box spline]{
 Box splines and the equivariant  index theorem}
\author{ C. De Concini,\quad
C. Procesi,\quad M. Vergne}
\begin{document}

\newpage

\begin{abstract}
In this article,  we
 start to recall the inversion formula for the convolution with the Box spline.
 The equivariant cohomology  and the equivariant $K$-theory
with respect to a compact torus $G$ of  various spaces associated to a linear action of $G$  in a vector space $M$ can be both described  using some  vector spaces of distributions, on the dual of the group $G$ or on the dual of its Lie algebra $\g$.
 The morphism from $K$-theory to cohomology is analyzed and the multiplication  by the Todd class  is shown to correspond to the
 operator (deconvolution) inverting the  semi-discrete convolution with  a box spline.
 Finally, the multiplicities of the index of a $G$-transversally elliptic operator on $M$ are determined using the infinitesimal index of the symbol.
 \end{abstract}
 \maketitle

{\small \tableofcontents}

\section*{Introduction}

\subsection{Motivations}
The motivation of this work is to understand the multiplicities of
a representation of a torus $G$ in the virtual representation space $\Ker (A)-\Coker(A)$ obtained as the index of a $G$-invariant, elliptic or more generally transversally elliptic,  pseudo-differential operator $A$, in terms of the symbol. For basic definitions and results we refer to the Lecture Notes of   Atiyah (cf. \cite{At}).

We shall restrict to the case of a torus $G$ (with Lie algebra $\mathfrak g$) acting on a vector space (to some extent this is the essential case).
Let $\Lambda$ be the lattice of characters of $G$, and for $\lambda\in \Lambda$, we denote by $g\to g^{\lambda}$ the corresponding function on $G$.
 According to the theory of Atiyah-Singer (cf. \cite{At}),
in the case  of a transversally elliptic operator,
 $\Ker(A)$ and $\Coker(A)$ might be infinite dimensional, but the multiplicity of a character is finite and the difference of the two multiplicities  in $\Ker(A)$ and $\Coker(A)$ is the Fourier coefficient of the generalized function $\index(A)$ on $G$.
We thus obtain a function $\ind_m(A)$ on $\Lambda$  so that
${\rm
index}(A)(g)=\sum_{\lambda\in \Lambda} \ind_m(A)(\lambda) g^{\lambda}$. We call $\ind_m(A)$  the
multiplicity index map.

A cohomological formula for the equivariant index of elliptic operators was obtained by  Atiyah-Bott-Segal-Singer.
Using integrals of equivariant cohomology classes, a formula for the equivariant index of transversally elliptic operators was obtained in
 \cite{BV2}, \cite{BV1}, \cite{par-ver4}.
These formulae define (generalized) functions on $G$ in terms of the Chern character of the symbol of $A$.
However,  the behavior of multiplicities is our main interest.
Remark that, even in the case of elliptic operators where we deal with finite dimensional representations, a "formula" for the multiplicities is not easy to deduce from the Atiyah-Bott-Segal-Singer fixed point formulae, as  $\index(A)(g)$ is
given  by  different formulae for each $g\in G$.
 A similar drawback of the formulae of \cite{BV2}, \cite{BV1}, \cite{par-ver4} is that for each $g\in G$, they are
  defined only on a neighborhood of   $g$ (and with different formulae for each $g\in G$).
 Thus  known formulae for the equivariant index were  not adapted to the study  of the Fourier transform.

Our point of view is new.
Instead of   functions on $G$,  we    consider directly  the multiplicity index of an operator $A$  as a function on $\hat G$.
Similarly we associate directly to the Chern character of the symbol of $A$  a spline function on  $\mathfrak g^*$.
 Here splines (called also multisplines in several variables) are the familiar objects in approximation theory: piecewise polynomial functions with respect to a polyhedral subdivision  of $\g^*$
 (see \cite{deboor}).
 Our main theorem (Theorem \ref{final})  says (essentially)   that  the multiplicity index is the restriction of a suitable spline function  to $\Lambda$, a lattice in $\mathfrak g^*$.
Our inspiration comes from the "continuous analogue" of the index: the Duistermaat-Heckman measure, a piecewise polynomial function on $\mathfrak g^*$ and from the ``quantization commutes with reduction"  results   on multiplicities of twisted Dirac operators.
The key point of our approach are explicit computations of the index of some transversally elliptic operators in terms of vector partition functions. We construct  two piecewise polynomial  functions, one obtained from the multiplicity index and Box splines,
 the other from the Chern character using our theory of the infinitesimal index \cite{dpv3} and we compare them on generators.
 Finally,  our final theorem (Theorem \ref{final}) follows  from  a remarkable inversion formula, basically due to Dahmen-Micchelli \cite{DM1}, for multisplines.

\bigskip

%One typical instance of index formula  is the Riemann-Roch theorem for complex manifolds.  A particularly remarkable case is that
%  of toric varieties. For such a variety the Riemann-Roch theorem  for line bundles provides a dictionary between continuous and discrete formulae for polytopes: the number of integral points in a polytope is related to the volume by a Todd operator.
%\smallskip

Let us first recall the basic formalism of our approach.
 Let $M:=M_X=\oplus_{a\in X}L_a$ be a complex vector space with a linear action of $G$ where $a\in X\subset \Lambda$ is a character and $L_a$ denotes the corresponding 1--dimensional representation of $G$.

 The vector partition function $\mathcal P_X$, a function on $\Lambda$  which describes the multiplicity of the action of the torus $G$ on polynomial functions on $M$  is approximated by  a multispline distribution $T_X$: the convolution of the Heaviside functions associated to the half line $\R^+a$, where $a$ runs through the sequence $X$ of  weights of
 $G$ in $M$ (we assume here in the introduction that all weights $a$ are on one side of a half-space and span $\g^*$). The  locally polynomial measure $T_X$  on $\g^*$ is  the Duistermaat-Heckman measure of the Hamiltonian vector space $M_X$.

 In approximation theory, one  introduces another special distribution {\em  the Box spline $B_X$} defined as  convolution  of the intervals $[0,1]a$ (thought of as measures or distributions).
An immediate  relation between $\mathcal P_X$ and $B_X$  is the fact that the  convolution of the Box spline $B_X$ with the partition function  $\mathcal P_X$ is the multispline $T_X$.  The Todd operator, an infinite series of constant coefficients differential operators,
 acts on spline functions. It  enters naturally in the  "deconvolution" formula,
  leading to the "Riemann-Roch formula" for $\mathcal P_X$ in function of $T_X$ (at least in the special case of $X$ unimodular):
We  apply  a series of constant coefficient operators to the piecewise polynomial function $T_X$ and then restrict it to the lattice. In this way we obtain the  vector partition function $\mathcal P_X$.

  These algebraic formulae are well-known:
cf.  Khovanskii-Pukhlikov \cite{KP}, Dahmen-Micchelli \cite{DM3}, Brion-Vergne \cite{brionvergne}, De Concini--Procesi \cite{dp1}
and they are equivalent to the Riemann-Roch theorem for line bundles over toric varieties.

  Our aim in this article is to show that the same deconvolution formula allows us to compute the index of any transversally elliptic operator  on $M_X$ in function of a piecewise polynomial  function on $\g^*$ associated to its symbol by applying to it the Todd  differential operator.

\subsection{Summary of results}   Let $M$ be a manifold, $T^*M$ its cotangent bundle and $p:T^*M\to M$ the canonical projection.  Given now a pseudo differential operator $A$ between the sections of two vector bundles $\mathcal E^+,\mathcal E^-$,  one constructs its symbol $\Sigma=\Sigma(x,\xi)$  which is a bundle map $\Sigma:p^*\mathcal E^+\to p^*\mathcal E^-$.

If $M$ has a
  $G$ action,  we
 denote by $T^*_GM$ the closed subset of $T^*M$, union of the conormals to the $G$ orbits.  Then a $G$--equivariant pseudodifferential operator $A$  is called
  $G$-transversally elliptic if the symbol $\Sigma$ restricted to  $T^*_GM$ minus the zero section is an isomorphism of bundles.\smallskip

 The symbol $\Sigma(x,\xi)$ of the pseudo-differential  transversally elliptic operator $A$  on $M$ determines two topological objects:

 1) An element of the equivariant $K-$theory group  $K_G^0(T^*_GM).$

2)  The Chern character
$\ch(\Sigma)$ of $\Sigma$, which   is  an element of the $G$-equivariant cohomology with compact supports of $T^*_GM$.
\medskip

 The index    of $A$, denoted $\index(A)$,  depends only on the symbol and defines a map from $K_G^0(T^*_GM) $ to the space of  generalized functions on $G$. The Fourier transform of $\index(A)$ is the multiplicity index map $\ind_m(A)$, a function on $\Lambda\subset \g^*$.

In \cite{dpv3}, we have associated  to $\ch(\Sigma)$   a distribution on $\mathfrak g^*$, its infinitesimal index, denoted
$\infdex(\ch(\Sigma))$.

Assume now that $M$ is a real vector space with a linear action of $G$.
 The list of weights of $G$ in the complex vector space $M\otimes_{\R}\C$ is $X\cup -X$ for some list $X\subset \Lambda$.
 For simplicity assume that $X$ generates $\mathfrak g^*$.

  For any $\Sigma\in K_G^0(T^*_GM)$, we prove that
the  distribution $\infdex(\ch(\Sigma))$
 is piecewise polynomial on $\mathfrak g^*$.
 Furthermore (Theorem \ref{theo:boxindex}), the following identity of locally $L^1$-functions of $\xi\in \mathfrak g^*$ holds

 \begin{equation}\label{pp}
\sum_{\lambda\in \Lambda}\ind_m(A)(\lambda) B_{X\cup -X}(\xi-\lambda)=\frac{1}{(2i\pi)^{\dim  M}}\infdex(\ch(\Sigma))(\xi).
\end{equation}
 In other words, $\infdex(\ch(\Sigma))$ is (up to a multiplicative constant)  the spline function obtained from convoluting  a Box spline  with the discrete measure $\sum_{\lambda}\ind_m(A)(\lambda)\delta_\lambda$.
 It is easy to check  this formula on  the Atiyah symbols $At^F$, and these elements  generate the $R(G)$-module $K_G(T^*_GM)$.
 In some sense, as explained  in Part 2, this formula is the Fourier transform  of   the formulae of Berline-Paradan-Vergne.

Apply the Todd operator associated to $X\cup -X$ to the spline function  $\infdex(\ch(\Sigma))(\xi)$. We obtain again a spline function $p$ on $\mathfrak g^*$.
Our final result (Theorem \ref{final})
says essentially that the restriction of this function $p$ to $\Lambda$ is the multiplicity index.
This follows from a   "deconvolution" formula for splines functions produced by convolution with Box splines.

\subsection{Outline of the article}

$\bullet$
In Part 1,  we   recall  some
  results obtained by Dahmen-Micchelli in the purely combinatorial context of the semi-discrete convolution with the Box spline.
Let  $X=[a_1,a_2,\ldots,a_N]$ be a finite list of vectors in a vector space $V$. The Box spline $B_X$ is the image measure of the hypercube $[0,1]^N$ by the map $(t_1,t_2,\ldots, t_N)\to \sum_i t_i a_i$.

We assume next that $X$ span a lattice $\Lambda$.
Convoluting a discrete measure supported on the lattice $\Lambda$ by $B_X$ produces  a locally polynomial function on $V$.
The first step is to   prove a {\em deconvolution  formula}.  Define  the Todd operator
$\prod_{a\in X}\frac{\partial_a}{1-e^{-\partial_a}}$.
Then we prove in Theorem \ref{theo:inversionuni} that, if we apply (in an appropriate sense) the Todd operator $Todd(X)$ to the Box spline and restrict to the lattice, we obtain the $\delta$ function of the lattice $\Lambda$ in the case of a unimodular system
(a  slightly more complicated  formula is obtained for any $X$):

 $$Todd(X)*B_X|_\Lambda=\delta_0.$$

We prove this result using our knowledge of the Dahmen-Micchelli spaces (see \cite{dp1},\cite{dpv1}).

We show that
Khovanskii-Pukhlikov formula and
more generally  Brion-Vergne formula for the partition function $\mathcal P_X$  is a particular case of
the  deconvolution formula.

$\bullet$
In Part 2, we consider $M:=M_X$ as a real $G$-manifold, we recall   our description of $K_G^0(T^*_GM)$ as well as $H_{G,c}^*(T^*_GM)$ as vector spaces of distributions, on the dual of the group $G$ or on the dual of its Lie algebra $\g$ (cf. \cite{dpv2} and \cite{dpv4} respectively). We compute the infinitesimal index of the  Chern character of the Atiyah symbol $At^F$ and descent formulae associated to
a finite subset of  elements of $g\in G$.
We use all these ingredients to give a general Formula, in  \eqref{lamax}, for the index of a transversally elliptic operator $A$ in function of the infinitesimal index  of the Chern character of the symbol of $A$.

\bigskip

 As we already pointed out,  our results are motivated by previous results of  Berline-Vergne, and Paradan-Vergne.
 But our point of view is dual. We work with functions on $\hat G$ or $\g^*$.
  This way, we  are dealing with very familiar objects: the  partition functions and the multispline functions.
 It is remarkable that the transversally elliptic operators
 having multiplicity index Partition functions are the building blocks of "all index theory".

We thank Michel Duflo and Paul-Emile Paradan for various comments on this text.

\section{Notations and preliminaries\label{Nop}}
Let $V$ be a $s$-dimensional real vector space equipped with a lattice $\Lambda\subset V$.   We choose   the Lebesgue measure $dv$ on $V$ for which   $V/\Lambda$ has volume 1. With the help of $dv$, we  can freely identify generalized functions on $V$ and distributions on $V$. We denote by $\C[V]$ the space of (complex valued) polynomial functions on $V$.

We denote by $S^1$ the circle group of complex numbers of modulus 1. The character group   of $\Lambda$, $G:=\hom(\Lambda,S^1)$  is   a compact torus, and $V$ can be identified to $\mathfrak g^*$, the dual of the Lie algebra $\mathfrak g$ of $G$.   Of course if we choose a basis of the lattice $\Lambda$, then we may identify  $\Lambda$ with $\Z^s$,  $V$ with $\R^s$ and $G$ with $(S^1)^s$.

Dually,   let    $\Gamma\subset \mathfrak g=V^*$ be the  lattice of elements $x\in \g$ such that $ \langle x\,|\, \lambda\rangle\in 2\pi\mathbb Z $ for all $\lambda\in \Lambda$, the torus $G$ is $\g/\Gamma$.
If $x\in \mathfrak g$, we shall denote by $e^x$ its class in $G$ and the duality pairing $G\times \Lambda\to S^1$ will be given by
$$(e^x,\lambda)\mapsto e^{i \langle x\,|\, \lambda\rangle}.$$
Under this duality, $\Lambda$ is the character group of $G$ and will sometimes be denoted by $\check G$.

We identify the space $C^\infty(G)$ with the subspace of $C^\infty(\mathfrak g)$ formed by functions periodic under $\Gamma$.  If $g=e^x\in G$, we will sometimes write   $ g^{\lambda}$ for   $e^{i \langle x\,|\, \lambda\rangle}$.

  $L_\lambda$ will denote the one-dimensional complex vector space with action of $G$ given by $g^{\lambda}.$
 Notice that, as a real $G$ linear representation, $L_\lambda$ is isomorphic to $L_{-\lambda}$ by changing the complex structure with the conjugate one.

More generally,
 \begin{definition}\label{MX}
Let $X$ be a finite sequence of non zero elements of  $\Lambda$.
Define  the  vector space
\begin{equation}\label{SMX}
M_X:=\oplus_{a\in  X}L_a.
\end{equation}
\end{definition}

Thus $M_X$ is a complex representation space for $G$ and every finite dimensional complex representation of $G$ is of this form for a well defined $X$.

Again the  space $M_X$ as a real $G$- representation depends only of the sequence $X$ up to sign changes.
In fact it will be very important to consider for a real representation space $M$ (with no $G$-invariant non zero vector)  all possible $G$-invariant complex structures on $M$.

 The space of $\mathbb C$-valued functions on $\Lambda=\check G$ will be denoted by $\mathcal C[ \Lambda ]$,  while we shall set $\mathcal C_\Z[\Lambda ]$ to be the subgroup of $\Z$-valued functions. We display such a function $f(\lambda)$ also as a formal series
$$\Theta(f):=\sum_{\lambda\in\Lambda}f(\lambda) e^{ i\lambda }.$$

The subspace $\mathbb C[\Lambda]$ of the functions with finite support is  the group algebra of $\Lambda$ but can be also considered as the coordinate ring of the complex torus $\mathfrak g\otimes_{\mathbb R} \mathbb C/\Gamma$ as algebraic group. Finally $\mathbb Z[\Lambda]:=\mathbb C[\Lambda]\cap \mathcal C_\Z[\Lambda]$ is the group ring of $\Lambda$ but can also be considered as the character ring of $G$ or the Grothendieck group of finite dimensional representations of $G$. Due to this, we shall sometimes denote it by $R(G)$. Indeed, if $T$ is a representation of $G$ in a  finite dimensional complex vector space, then $\Tr\, T(g)$ is a finite linear combination of characters and this gives the desired homomorphism.

If $f(\lambda)$ is of at most polynomial  growth, the  series
$g\to \sum_{\lambda\in\Lambda}f(\lambda) g^{\lambda }$
defines  a  generalized function on
the torus $G$. We  denote by  $R^{-\infty}(G)$ the  subspace of $\mathcal C[\Lambda]$ consisting of these $f(\lambda)$.

Let us point out that $\Lambda$ acts on $\mathcal C[ \Lambda ]$ by translations, namely if $a\in \Lambda$ and $f\in \mathcal C[ \Lambda ]$,
$(t_af)(\lambda):=f(\lambda-a)$. This clearly corresponds to multiplication by $e^{ia}$ on $\Theta(f)$. It follows that both
$\mathcal C[ \Lambda ]$ and  $R^{-\infty}(G)$ are $\mathbb C[ \Lambda ]$-modules and of course $ \mathcal C_\Z[\Lambda]$ is a $\mathbb Z[\Lambda]$-module. We also define the difference operator $$\nabla_a:=id-t_a.$$

Passing to the continuous setting, if we take the space of polynomial functions $\mathbb C[\mathfrak g]$ on $\g$  (equal to the symmetric algebra $S[\mathfrak g^*]$),  we are going to consider  the space of distributions $ \mathcal D'(\mathfrak g^*)$ on $\mathfrak g^*$ as a $S[\mathfrak g^*]$-module, using differentiation. We denote by $\partial_a$ the partial derivative in the $a\in\mathfrak g^*$ direction.

\part{Algebra}

\section{Box splines}

\subsection{Splines}
    Let $X=[a_1,a_2,\ldots, a_N]$ be   a sequence (a multiset)  of  $N$ non zero vectors in $\Lambda$.

    The {\bf zonotope} $Z(X)$
associated to $X$ is the polytope
  $$
  Z(X):=\{\sum_{i=1}^N  t_i a_i\,\,|\,\, t_i\in [0,1]\}.
  $$
In other words, $Z(X)$ is the Minkowski sum of the segments $[0,a_i]$
over all  vectors $a_i\in X$.

%We denote by $\Cone(X)$
%the cone generated by the elements $X$ in $V$.
%The  cone $\Cone(X)$ is equal to  $V$  if and only if  $0$ is in the interior of the zonotope $Z(X)$.
%
%

%We denote by $\C[V]$ the space of (complex valued) polynomial functions on $V$.

    Recall that the Box spline $B_X$ is the distribution on $V$  such that, for a test function
    $test$ on $V$, we have the equality

    \begin{equation}\label{eq:box}
 \ll B_X, test\rr =\int_{t_1=0}^1\cdots \int_{t_N=0}^1 test(\sum_{i=1}^N t_i a_i) dt_1\cdots dt_N.
\end{equation}

The Box spline is a compactly supported probability measure on $V$ and we have

\begin{equation}\label{eq:FB}
\int_V e^{i\ll v,x\rr}B_X(v)=\prod_{k=1}^N \frac{e^{i\ll a_k,x\rr }-1}{i\ll a_k,x\rr }.
\end{equation}

If $X$ generates $V$, the zonotope is a full dimensional polytope, and $B_X$  is given by integration against a piecewise polynomial function on $V$, supported and continuous on $Z(X)$, that we  still call $B_X$.

\begin{example}Let $V=\R$ be  one dimensional  and let $X_k=[1,1,\ldots,1]$ where $1$ is repeated $k$ times.
Figure \ref{b1} gives  the graphs of $B_{X_1}$, $B_{X_2}$ and  $B_{X_3}$.

\begin{figure}
\begin{center}
 \includegraphics[width=37mm]{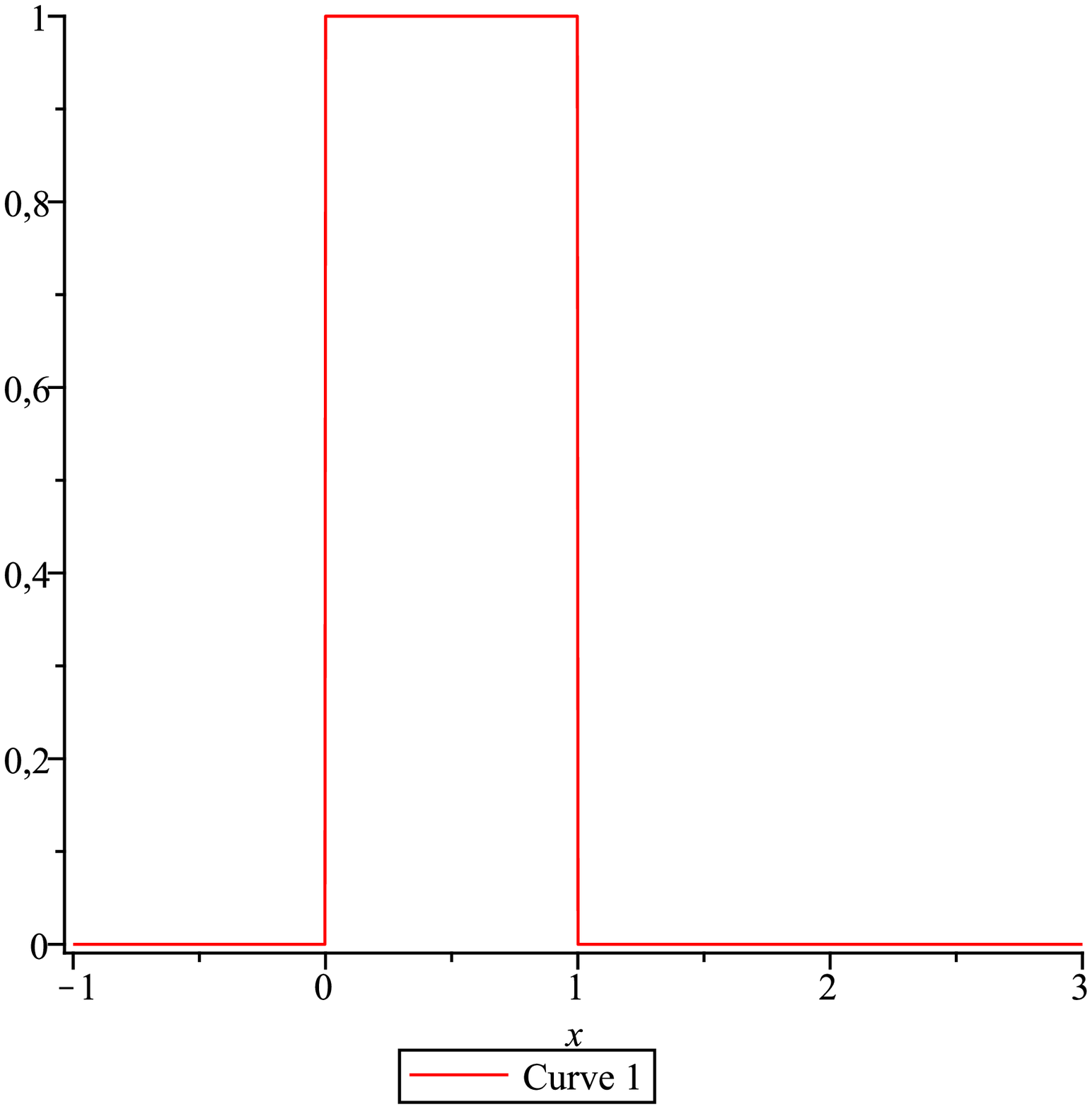} \includegraphics[width=37mm]{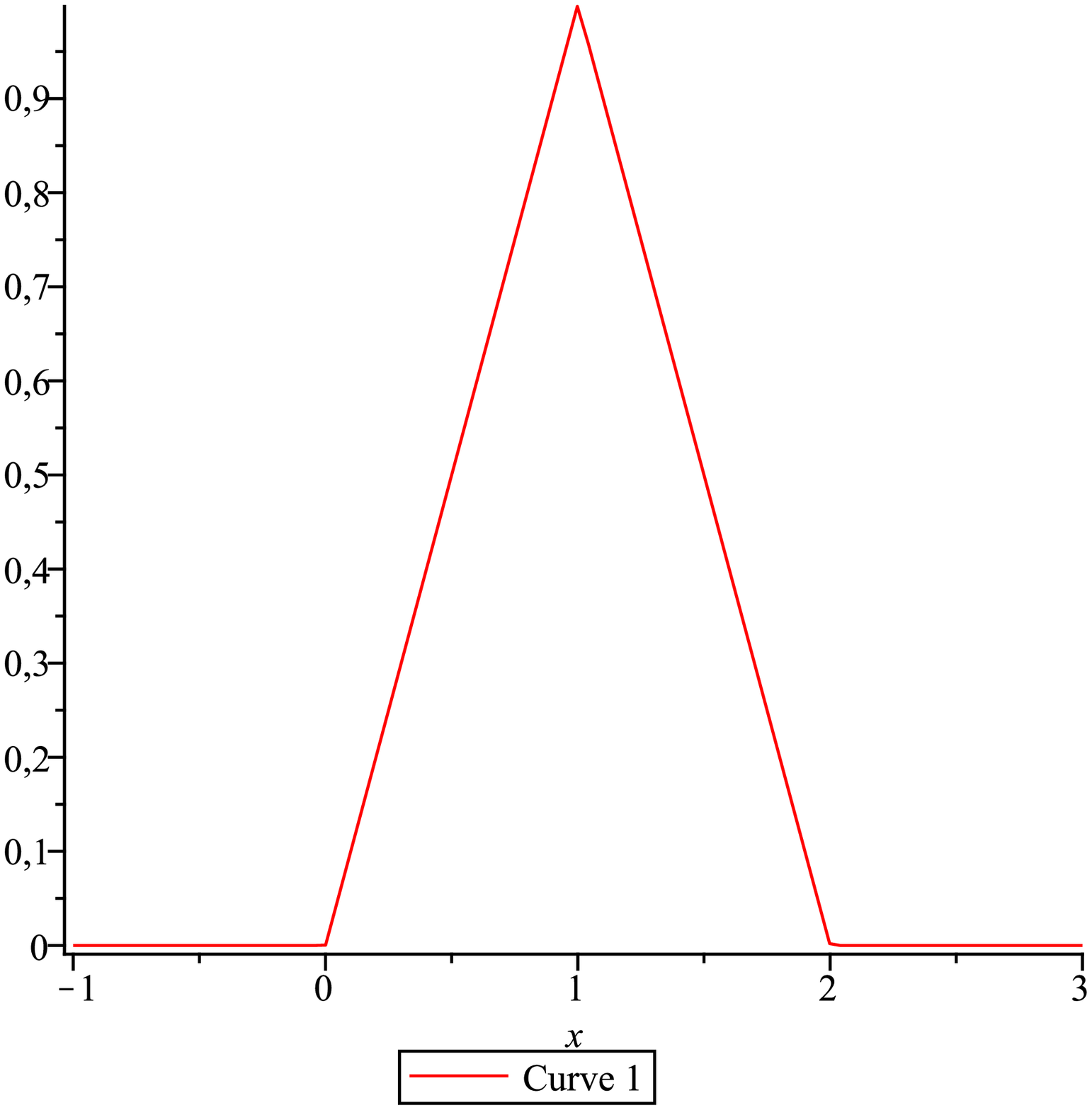}  \includegraphics[width=37mm]{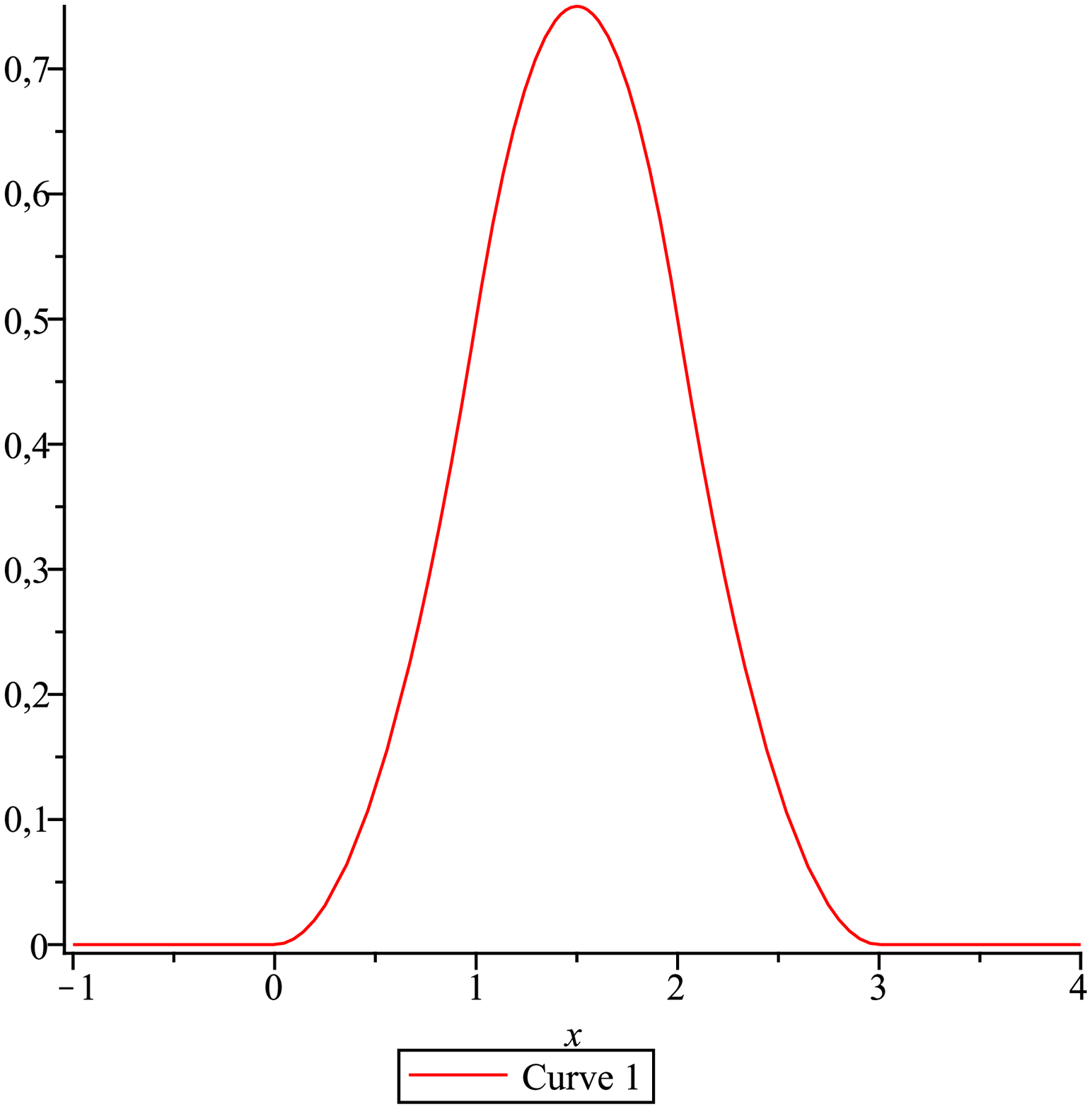}\\
 \caption{$B_{X_1},\quad B_{X_2},\quad B_{X_3}$}\label{b1}
 \end{center}
\end{figure}

\end{example}

Let us describe more precisely where this function is given by a polynomial formula.
\begin{definition}
An hyperplane of $V$ generated by a subsequence of elements of $X$ is
called  {\em admissible}.

An {\em affine admissible hyperplane} is a translate  $\lambda+H$ of an admissible hyperplane $H$ by an element $\lambda\in \Lambda$.
\end{definition}

\begin{remark}
The zonotope is bounded by affine admissible hyperplanes.
\end{remark}

\begin{definition}
An element of $v\in V$ is called  {\em regular} if  it   does not lie in any admissible hyperplane. We denote by $V_{\rm reg}$ the open subset of  $V$ consisting of  regular elements.
A connected component $\c$ of the set of regular elements will be called a {\em (conic) tope}.

An element of $v\in V$ is called  {\em affine regular} if it   does not lie in any  admissible affine hyperplane. We denote by $V_{\rm reg,aff}$ the open subset of  $V$ consisting of affine regular elements.
A connected component $\tau$ of the set of affine regular elements will be called an  {\em alcove} (see Figure \ref{a2}).

\end{definition}

\begin{figure}
\begin{center}
 \includegraphics[width=37mm]{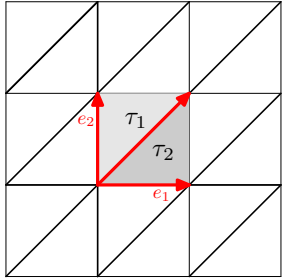}\\
 \caption{Topes for $X:=[e_1,e_2,e_1+e_2]$}\label{a2}
 \end{center}
\end{figure}

\begin{definition}

We will say that a  locally $L^1$ function $b$ on $V$ is piecewise polynomial (with respect to $(X,\Lambda)$)  if, on each alcove $\tau$, there exists a polynomial function $b^{\tau}$ on $V$ such that the restriction of $b$ to $\tau$ coincides with the restriction of  the polynomial $b^{\tau}$ to $\tau$.

If $b$ is a piecewise polynomial function,  we will say that the distribution $b(v)dv$ is piecewise  polynomial.

We denote by $\PW_{(X,\Lambda)}(V)$ the space of these piecewise polynomial functions on $V$.

When there is no ambiguity, we may drop $\Lambda$ or $X$ or both and write simply  $\PW_{ X }(V)$ or $\PW (V)$.
\end{definition}

\begin{remark}\begin{itemize}
\item $\PW_{(X,\Lambda)}(V)$ is preserved by the translation operators, $t_a$, $a\in \Lambda$.
\item The polynomial function $b^{\tau}$ is uniquely determined by $b$ and $\tau$.

\item The support of a piecewise polynomial function is a union of closures of alcoves.

\item      If $X$ generates $V$, the Box spline  $B_X$  is a piecewise  polynomial  function supported on the zonotope $Z(X)$.
Furthermore,  if  for any $a$ in $X$, $X\setminus\{a\}$ still spans $V$, this piecewise polynomial function extends continuously on $V$. In particular this applies if $0$ is an interior point in $Z(X)$.
\end{itemize}

\end{remark}

\bigskip

A piecewise polynomial function $h$ maybe continuous. In this case, its restriction to the lattice $\Lambda$ is well defined. If not, we may define  the "restriction" of $h$ to $\Lambda$ by a limit procedure as follows.

 Consider a piecewise polynomial function $h$ on $V$.
Let $\c$ be an alcove in $V$ containing $0$ in its closure.
Then for any $\lambda\in \Lambda$, $\tau=\lambda+ \c$ is an alcove on which $h$ is the polynomial $h^{\tau}$. We thus  define  a  map  $\lim_{\c}: \PW(V)\to \mathcal C[\Lambda] $ by setting
$$\lim_{\c}(h)(\lambda):=h^\tau(\lambda).$$

\begin{example}
Consider the box spline $B_{X_1}$ in Figure \ref{b1}. If $\c$ is the alcove $]0,1[$, then  $\lim_{\c}B_{X_1}(0)=1$, as we take the limit from the right, while  $\lim_{-\c}B_{X_1}(0)=0$, for the opposite alcove.
\end{example}

Notice that the     operator $\lim_{\c}$ on $\PW(V)$ commutes with translations by elements of $\Lambda$.

It is convenient to think of an element in $\PW_{(X,\Lambda)}(V)$ as a function only on the set of affine regular points. As such, differentiating alcove by alcove,  this space is a module over the ring of formal differential operators of infinite order with constant coefficients.

Therefore we may set \begin{definition}\label{oppw}
Given an operator $D$ of infinite order with constant coefficients  and $b\in \PW_{(X,\Lambda)}(V)$ we shall denote by   $D_{pw}b\in \PW_{(X,\Lambda)}(V)$  the element defined by the action of $D$ on $b$ alcove by alcove:
$$(D_{pw}b)^{\tau}=Db^{\tau}$$
\end{definition}

Notice that the action $D_{pw}$ on $\PW(V)$ commutes with the action of translation by elements of $\Lambda$.
 type.

{\bf Warning}\quad We may act on such a  function,  thought of as distribution,  with a finite order differential operator. In general we get a different result to that we obtain taking this function of affine regular points, applying the same operator and then considering the result as a $L^1$ function. Indeed the two coincide only on the set of affine regular points.

\begin{example}
Consider again the Box spline $B_{X_1}$ from Figure \ref{b1}. Then if we consider $B_{X_1}$ as a distribution, we have $\partial B_{X_1}=\delta_0-\delta_1$, a difference of two delta functions, while $\partial_{pw} B_{X_1}=0$.
\end{example}

\medskip

If $K$ is a polynomial function on $V$, we will say that the function $\lambda\mapsto K(\lambda)$ is a polynomial function on $\Lambda$.  The polynomial function $K$ is determined by its restriction to $\Lambda$.
 A function $k$ on $\Lambda$ for which there exists a sublattice $\Lambda'$ of $\Lambda$ such that, for any $\xi\in \Lambda$, the function
$\nu \to k(\xi+\nu)$  is polynomial on $\Lambda'$ will be called a quasi polynomial function on $\Lambda$.

We denote by $\delta_0$ the function on $\Lambda $ identically equal
to $0$ on $\Lambda $,  except  for $\delta_0(0)=1$.

\begin{definition}

If $f\in \mathcal C[\Lambda]$, define
 the distribution $B_X*_d f$ by
 $$B_X*_d f=\sum_{\lambda\in \Lambda} f(\lambda) t_{\lambda}B_X.$$

When $X$ spans, this gives rise  to the piecewise  polynomial function
 $$(B_X*_d f)(v)=\sum_{\lambda\in \Lambda} f(\lambda)  B_X(v-\lambda).$$
\end{definition}

The notations $*_d$ means discrete.
$B_X*_d f$ is the convolution of $B_X$ with the discrete measure $\sum_\lambda f(\lambda)\delta_\lambda$.

We denote (to emphasize the difference with the discrete case) the usual convolution
of two distributions $\theta_1,\theta_2$
(with some support conditions so that their convolution exists)
 by $\theta_1*_c\theta_2$.

Our aim is to write an inversion formula for $f\to B_X*_d f$.
As this operator is not injective, we will need a few other data.

\begin{remark}If $p\in \C[V]$ is a polynomial, then by Taylor formula, we have $t_b p=e^{-\partial_b}p$.\end{remark}

If $Y$ is a  sequence of vectors, we define
the operator $I(Y)$  on $\C[V]$ by
\begin{equation}
I(Y):=\prod_{a\in Y} \frac{(1-e^{-\partial_a})}{\partial_a}.
\end{equation}
Then, by integrating the Taylor formula,  we have $$\int_{t_1=0}^1\cdots \int_{t_N=0}^1 p(v-(\sum_{i=1}^N t_i a_i)) dt_1\cdots dt_N= (I(Y)p)(v).$$

The operator  $I(Y)$ is an invertible operator on $\C[V]$.
 We denote the inverse of $I(Y)$  by $Todd(Y)$:
\begin{equation}
Todd(Y):=\prod_{a\in Y} \frac{\partial_a}{(1-e^{-\partial_a})}.
\end{equation}

Notice that  $$\ll B_X*_cf,test\rr =
   \int_{t_1=0}^1\cdots \int_{t_N=0}^1 \langle f, test(v-(\sum_{i=1}^N t_i a_i)) \rangle dt_1\cdots dt_N$$
  for any distribution $f$ on $V$.
Thus \begin{proposition}
\label{dire}For $p$ a polynomial function on $V$, the usual convolution $B_X*_c p$   is still a polynomial given by
the formula
$B_X*_cp=I(X)p$
and its inverse is given by the operator $Todd(X)$.
\end{proposition}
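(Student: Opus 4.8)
The plan is to compute $B_X*_c p$ pointwise, recognize it as $I(X)p$ by the integrated Taylor formula recalled just above the statement, and then read off the inverse from the invertibility of $I(X)$ that has already been noted.

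First I would use that $B_X$ is a distribution with compact support (indeed a compactly supported probability measure, by \eqref{eq:box}), while $p$ is smooth. Hence $B_X*_c p$ is a genuine smooth function, and the usual formula for the convolution of a compactly supported distribution with a $C^\infty$ function gives, at every $v\in V$,
\[
(B_X*_c p)(v)=\ll B_X(y),\,p(v-y)\rr=\int_{t_1=0}^1\cdots \int_{t_N=0}^1 p\Big(v-\sum_{i=1}^N t_i a_i\Big)\,dt_1\cdots dt_N,
\]
the second equality being \eqref{eq:box} applied to the test function $y\mapsto p(v-y)$. Now $p\big(v-\sum_i t_i a_i\big)=\prod_i e^{-t_i\partial_{a_i}}p(v)$, a finite sum since $p$ is a polynomial, and integrating each $t_i$ over $[0,1]$ replaces $e^{-t_i\partial_{a_i}}$ by $\tfrac{1-e^{-\partial_{a_i}}}{\partial_{a_i}}$; hence the iterated integral equals $\prod_{a\in X}\tfrac{1-e^{-\partial_a}}{\partial_a}\,p=I(X)p$, which is exactly the integrated Taylor formula stated above. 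All these manipulations are legitimate because only finitely many terms act on a fixed polynomial, and $I(X)p\in\C[V]$. Thus $B_X*_c p=I(X)p$ on $\C[V]$.

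For the last assertion I would simply invoke the invertibility of $I(X)$ on $\C[V]$ noted just before the statement: each factor $\tfrac{1-e^{-\partial_a}}{\partial_a}$ has the form $1+(\text{higher order in }\partial_a)$, hence is invertible on polynomials, with inverse the Todd series $\tfrac{\partial_a}{1-e^{-\partial_a}}$, and the product of these inverses over $a\in X$ is $Todd(X)$. Consequently $Todd(X)\big(B_X*_c p\big)=Todd(X)\,I(X)\,p=p$ for every $p\in\C[V]$, which is precisely the claim that $Todd(X)$ inverts the map $p\mapsto B_X*_c p$.

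The statement is a short verification with no genuine obstacle; the only points deserving a word of care are (i) that the convolution of $B_X$ with the non-integrable function $p$ makes sense — handled by the compact support of $B_X$, which is why the pointwise formula is used rather than an $L^1$-type argument — and (ii) keeping track of signs so that the translation $p\big(v-\sum_i t_i a_i\big)$ above matches the convention used in the integrated Taylor formula. If one prefers to argue through the displayed identity for $\ll B_X*_c f,test\rr$ instead of pointwise, one additionally needs Fubini to interchange the $[0,1]^N$-integration with the pairing against $test$, together with the obvious change of variables moving the translation onto $p$; this is again immediate here.
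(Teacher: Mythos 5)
Your argument is correct and follows the same route as the paper: the pointwise evaluation of $B_X*_c p$ via \eqref{eq:box} applied to $y\mapsto p(v-y)$, the identification of the resulting iterated integral with $I(X)p$ through the integrated Taylor formula, and the observation that $Todd(X)$ is the inverse of $I(X)$ on $\C[V]$. No issues.
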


\subsection{Inversion formula: the unimodular case}

Recall that a sequence $X$ is unimodular if $X$ spans $V$ and if any basis $\sigma$ of $V$ extracted from $X$ is a basis of $\Lambda$.
We will prove now that if $X$ is unimodular, then the inverse of the semi-discrete convolution by the box spline $B_X$  $$K\to B_X*_d K; \hspace{1cm}\C[\Lambda]\to \PW(V)$$
is  obtained by applying the operator $Todd(X)_{pw}$ (cf. Definition \ref{oppw}) to the piecewise polynomial function $B_X*_d K$ and then passing to a suitable limit.

\begin{theorem}\label{theo:inversionuni}
 Assume that $X$ is unimodular. Let $\c$ be an alcove in $V$ containing $0$ in its closure and contained in $Z(X)$.
  Then
\begin{enumerate}[i)]
\item
$\lim_{\c} (Todd(X)_{pw}B_X)=\delta_0.$

\item
For any $K\in \mathcal C[\Lambda]$,

$$K= \lim_{\c} (Todd(X)_{pw} (B_X*_d K)).$$
\end{enumerate}
\end{theorem}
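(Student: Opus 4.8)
The plan is to reduce (ii) to (i): since $B_X *_d K = \sum_\lambda K(\lambda)\, t_\lambda B_X$, the operator $Todd(X)_{pw}$ commutes with translations by $\Lambda$ and acts alcove-by-alcove, and $\lim_{\c}$ also commutes with $t_\lambda$, we get
\[
\lim_{\c}\bigl(Todd(X)_{pw}(B_X *_d K)\bigr)(\mu)
= \sum_{\lambda\in\Lambda} K(\lambda)\,\lim_{\c}\bigl(Todd(X)_{pw}B_X\bigr)(\mu-\lambda).
\]
By (i) the inner quantity is $\delta_0(\mu-\lambda)$, so the sum collapses to $K(\mu)$. (One must check this sum is locally finite, which follows because $B_X$, hence $Todd(X)_{pw}B_X$, is supported on the compact zonotope $Z(X)$, so only finitely many $\lambda$ contribute near any point.) So the whole weight of the argument is on statement (i).

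For (i), the first step is to make sense of $Todd(X)_{pw}B_X$. On each alcove $B_X$ agrees with a polynomial $B_X^\tau$; applying the infinite-order operator $Todd(X) = \prod_{a\in X}\partial_a/(1-e^{-\partial_a})$ to a fixed polynomial is a finite operation (only finitely many terms of the Taylor expansion survive), so $Todd(X)_{pw}B_X$ is a well-defined element of $\PW_X(V)$. Next I would invoke Proposition~\ref{dire}: on polynomials, $Todd(X)$ inverts $I(X)$, and $B_X *_c p = I(X)p$ for $p$ polynomial. The key idea is then to relate the piecewise operation $Todd(X)_{pw}$ applied to $B_X$ itself (which is the convolution identity for the measure $B_X$) to the genuine convolution $B_X *_c (\,\cdot\,)$ acting on polynomials via $\lim_{\c}$. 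Concretely, I expect the right framework to be the Dahmen--Micchelli spaces $D(X)$ and the central fact (from \cite{dp1}, \cite{dpv1}) that the local pieces $\{B_X^\tau\}_{\tau}$ of the box spline, together with the difference equations $\nabla_a B_X *_d (\,\cdot\,)$ they satisfy, are governed by $D(X)$; in the unimodular case the relevant algebra of difference operators and the algebra of differential operators match up exactly, which is precisely what makes the answer the clean $\delta_0$ rather than a more complicated expression.

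So the detailed route for (i) is: (a) compute the genuine convolution $Todd(X)*_c B_X$ in the sense of distributions — using \eqref{eq:FB}, the Fourier transform of $B_X$ is $\prod_k (e^{i\langle a_k,x\rangle}-1)/(i\langle a_k,x\rangle)$, and formally multiplying by the symbol of $Todd(X)$ gives $\prod_k (1-e^{-i\langle a_k,x\rangle})^{-1}\cdot(e^{i\langle a_k,x\rangle}-1) = \prod_k e^{i\langle a_k,x\rangle}$, i.e.\ the delta at the vertex $\sum_k a_k$ of the zonotope — but this distributional computation is exactly the one the Warning cautions against, since it does not equal the piecewise operation; (b) instead, express the difference between $Todd(X)_{pw}B_X$ and its distributional analogue as a sum of distributions supported on affine admissible hyperplanes, using $\partial_{pw} = \partial - (\text{boundary jump terms})$ repeatedly; (c) restrict to $\Lambda$ via $\lim_{\c}$ and use unimodularity to show all the hyperplane-supported corrections either vanish on $\Lambda$ or telescope, leaving only the contribution $\delta_0$ from the chosen alcove $\c$. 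The main obstacle is step (b)--(c): tracking the boundary/jump terms produced when one trades the distributional derivative for the piecewise derivative across each of the $N$ factors, and showing that after restriction to the lattice and in the unimodular case these collapse. This is where the structural input on $D(X)$ and the unimodularity hypothesis (every extracted basis spans $\Lambda$, so that no "fractional lattice point" corrections appear on the admissible hyperplanes) do the real work; I would expect the cleanest argument to avoid the explicit bookkeeping by characterizing both sides as the unique element of an appropriate finite-dimensional space (functions on $\Lambda$ annihilated by the ideal of difference operators attached to $X$) satisfying a normalization at $0$, and checking the normalization via the one-dimensional example $B_{X_1}$ reasoning extended through the unimodular product structure.
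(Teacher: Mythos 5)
Your reduction of (ii) to (i) is correct and is exactly the paper's Remark \ref{soeq}, including the local finiteness coming from the compact support of $B_X$. The problem is that your argument for (i) is a plan rather than a proof, and the plan has a genuine gap precisely where you flag ``the main obstacle''. Step (a) is, as you say yourself, the computation the Warning forbids; worse, since $Todd(X)$ is an operator of infinite order, ``$Todd(X)$ applied to $B_X$ as a distribution'' is not even defined, so in steps (b)--(c) you would have to truncate, track the jump terms on affine admissible hyperplanes produced by each of infinitely many translation operators, and then justify a limit --- none of which is carried out, and which is substantially harder than the statement you are trying to prove. Your proposed escape route --- characterize both sides as \emph{the} element of the finite-dimensional space of solutions of the difference equations attached to $X$ normalized at $0$ --- cannot work as stated, because $\delta_0$ does not belong to $DM(X)$: nonzero elements of $DM(X)$ are quasi-polynomials on $\Lambda$ and are never compactly supported, so there is no global characterization of $\delta_0$ inside $DM(X)$.

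What actually makes the paper's proof work is a \emph{local} interpolation argument that is absent from your proposal. Fix $\lambda$ with $(\lambda+\c)\cap Z(X)\neq\emptyset$, so that $0$ and $\lambda$ both lie in the finite set $\delta(\lambda+\c\,|\,X)=(\epsilon-Z(X))\cap\Lambda$. Theorem \ref{theo:deltalambda} (Dahmen--Micchelli) produces $p_{\lambda,\c}\in DM(X)$ agreeing with $\delta_0$ on that finite set; a support argument (only $\nu\in\delta(\lambda+\c\,|\,X)$ contribute to $\sum_\nu p_{\lambda,\c}(\nu)B_X(v-\nu)$ for $v\in\lambda+\c$) gives $B_X*_d p_{\lambda,\c}=B_X^{\lambda+\c}$ on that alcove, with the left side a polynomial in $D(X)$ by Lemma \ref{daDMaD}. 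Unimodularity enters exactly once: it makes the restriction $D(X)\to DM(X)$ an isomorphism, so $p_{\lambda,\c}$ is a polynomial in $D(X)$ and Theorem \ref{theo:act} yields $B_X^{\lambda+\c}=I(X)p_{\lambda,\c}$, hence $Todd(X)B_X^{\lambda+\c}=p_{\lambda,\c}$ and $\lim_{\c}(Todd(X)_{pw}B_X)(\lambda)=p_{\lambda,\c}(\lambda)=\delta_0(\lambda)$. To repair your write-up you need to import this local mechanism (Theorems \ref{theo:deltalambda} and \ref{theo:act} plus the support argument); the jump-term bookkeeping you sketch is not a viable substitute.
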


\begin{remark}

If $0$ belongs to  the interior of the zonotope $Z(X)$, one can show that  the function  $Todd(X)_{pw} B_X$ extends to a continuous function on $V$. We will recall this result in Remark \ref{cont}.
\end{remark}

\begin{remark}\label{soeq}
The two items  in Theorem \ref{theo:inversionuni} are equivalent statements. The first item is the particular case of the second item applied to $K=\delta_0$, and the other is deduced from the first one by writing $K$ as a linear combination   of translates of $\delta_0$.
However, we list them  independently  as  we want to emphasize  this striking property of the Box spline function.
 Figure   \ref{tb1}
 describes $(Todd(X)_{pw}B_X)$ for $X=X_1,X_2,X_5$.

 \begin{figure}
\begin{center}
 \includegraphics[width=37mm]{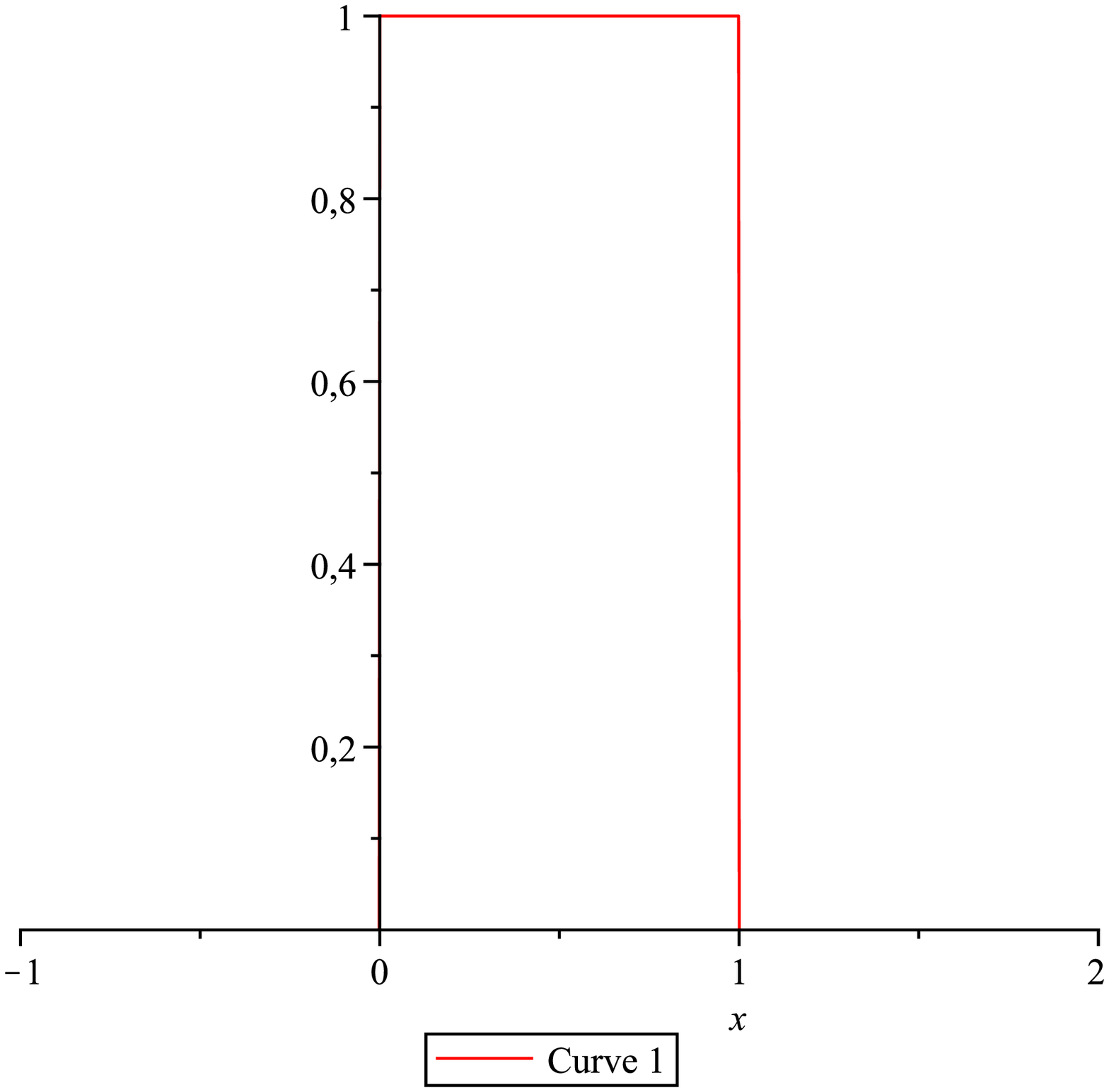}
 \includegraphics[width=37mm]{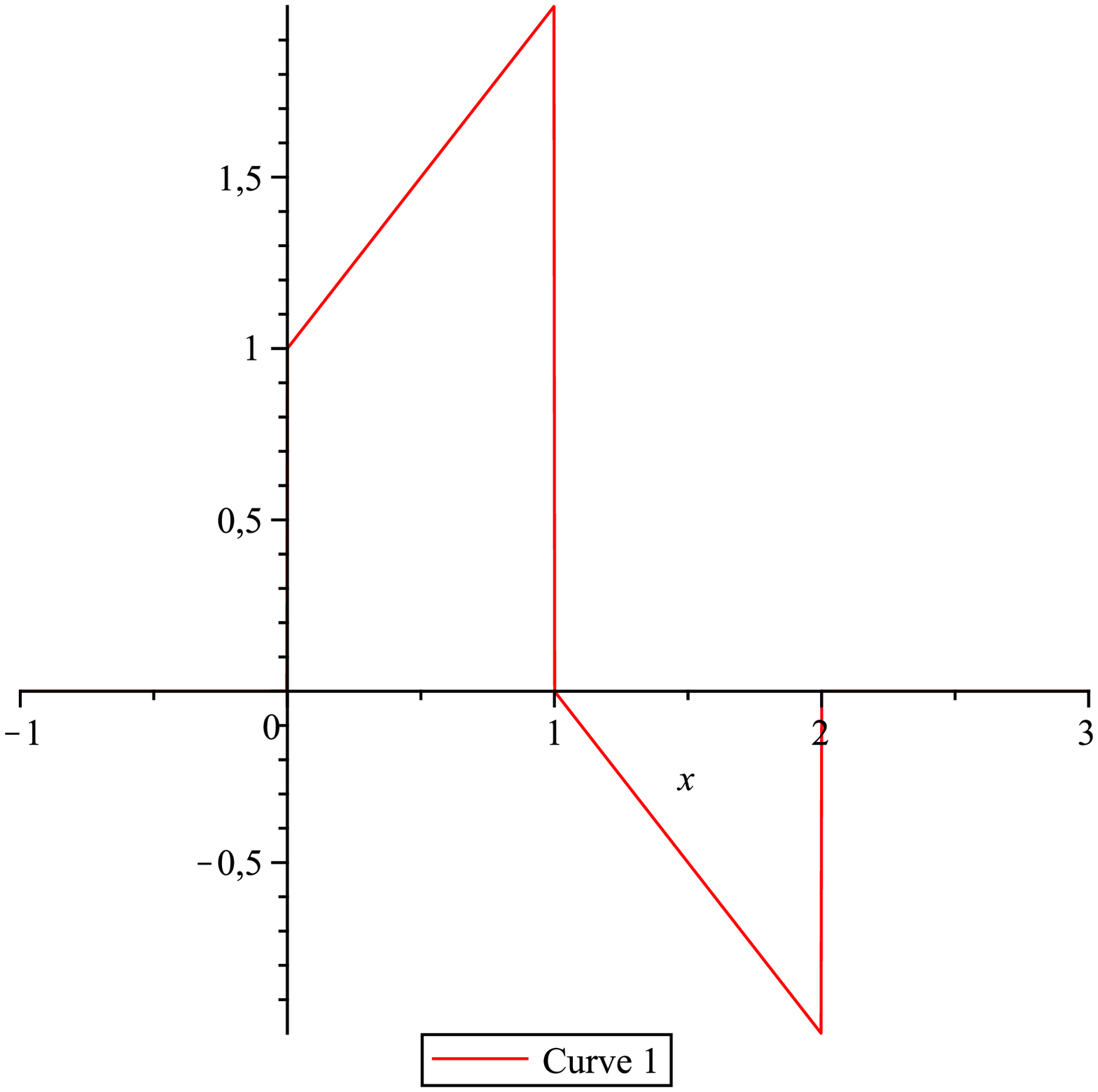}  \includegraphics[width=37mm]{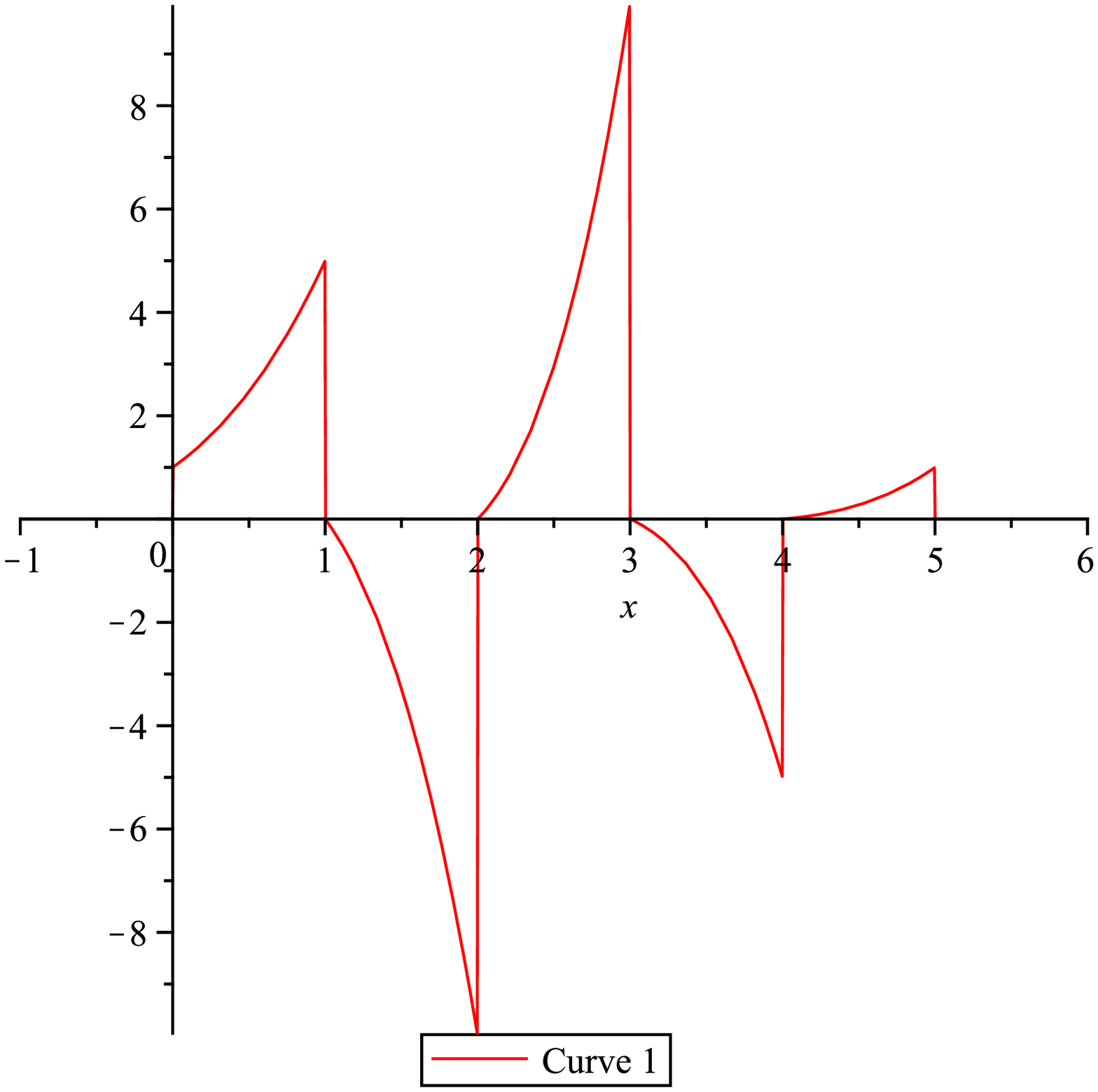}\\
 \caption{$Todd(X_1)_{pw} B_{X_1}, \quad Todd(X_2)_{pw} B_{X_2},\quad Todd(X_5)_{pw} B_{X_5}$}\label{tb1}
 \end{center}
\end{figure}

  \end{remark}

%
%The beauty of this theorem is that for interesting situations, the
%function $B_X*_d K$ has a geometric meaning, and is  piecewise polynomial with respect to large regions (set of regular values of the moment map,....). The inversion formula shows that $K$ has also a geometric meaning, and is given by polynomials in large regions.\marginpar{why polynomials?}
%

\bigskip

 We will give the proof  of this theorem in  Subsection \ref{proofinversion} after having introduced some further notions.

\subsection{Dahmen-Micchelli spaces}

Let us recall some facts on Dahmen-Micchelli polynomials.

If $I$ is a   sequence  of vectors, we define the operators  $\partial_I:=\prod_{a\in I}\partial_a$ and $\nabla_I:=\prod_{a\in I}\nabla_a$. These operators  are defined on distributions $B$ since  by duality we can set:
$$\langle t_a B, test\rangle =\langle B,t_{-a} test\rangle,  \hspace{1cm} \langle \partial_a B,test\rangle =-\langle B,\partial_a test\rangle .$$

If $Y$ is a subsequence of $X$,   by  $X\setminus Y$ we mean the complement  in $X$ of the sequence $Y$. If $S$ is a subset of $V$, we also employ  the notation    $X\setminus S$  for  the sequence of elements of $X$ not lying in $S$, and  $X\cap S$  for the sequence of elements of $X$  lying in $S$.
We have the following equality of distributions (cf. \cite{dp1} Proposition 7.14):

\begin{equation}\label{eq:nablabox}
\partial_Y B_X=\nabla_Y B_{X\setminus Y}.
\end{equation}

A subsequence $Y$ of $X$ will be called {\em long} if the sequence $X\setminus Y$ does not generate the vector space $V$.
A  long subsequence $Y$, minimal along the long subsequences, is also called a  {\em cocircuit}. In this case $Y=X\setminus H$ where $H$ is an admissible hyperplane.

In particular, if $Y=X\setminus H$, using equation (\ref{eq:nablabox}) we have that $\partial_Y B_X=\nabla_Y B_{X\cap H}$ is supported on the union of the  affine admissible hyperplanes which are translates of $H$ by elements of $Y$. So the restriction of $\partial_Y B_X$ to any alcove $\tau$ is equal to $0$.

Recall the definitions.

\begin{definition}

1)
The space $D(X)$ is the space  of (generalized) functions $B$ on $V$ such that
$\partial_Y B=0$ for all long subsequences $Y$ of $X$.

2)
The space $DM(X)$ is the space of integral valued functions $K$ on $\Lambda$ such that
$\nabla_Y K=0$ for all long subsequences $Y$ of $X$.

\end{definition}

Of course, it is sufficient to impose these equations for $Y$ running  along all cocircuits.

\medskip

  If $X$ spans $V$,            it is easy to see that $D(X)$ is a finite dimensional space  of polynomial functions on $V$
             and that $DM(X)$ is a free abelian group of finite rank of quasi-polynomial functions on $\Lambda$ \cite{DM1} (see \cite{dp1}).
             
             In our paper  we need to compare often $D(X)$ with $DM(X)$. In order to do this it is more convenient to extend $DM(X)$ to the spaces $DM(X)_{\Bbb R}:=DM(X)\otimes_{\Bbb R},$ $DM(X)_{\Bbb C}:=  DM(X)\otimes_{\Bbb C}$ respectively of real or complex valued functions satisfying the same difference equations as  $DM(X) $  (cf. Theorem \ref{theo:deltalambda}).  Sometimes by abuse of notations we shall drop the subscript $\Bbb C$  and  just write $DM(X)$ for $DM(X)_{\Bbb C} .$  Similarly we could take complex valued solutions of the diffrential equations getting the space  $D(X)_{\Bbb C}:=  D(X)\otimes_{\Bbb C}$.
             
             The restriction of a function $p\in D(X)$ (a polynomial) to $\Lambda$ is in $DM(X)\otimes_{\Bbb R}$.
If $X$ is a  unimodular system, this restriction map is an isomorphism.

The space $D(X)$ is invariant by differentiations. The space $DM(X)$ is invariant by translations by elements of $\Lambda.$

The following lemma follows from the definitions.

\begin{lemma}\label{lem:partialydx}
If $Y$ is a subsequence of $X$ such that $X\setminus Y$ still generates $V$, then
$\partial_Y D(X)$ is contained in $D(X\setminus Y)$
and $\nabla_Y DM(X)$  is contained in $DM(X\setminus Y)$.
\end{lemma}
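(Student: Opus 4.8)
The plan is to reduce both inclusions to the defining difference/differential equations. Fix a subsequence $Y$ of $X$ such that $X\setminus Y$ still generates $V$. For the statement about $D(X)$: take $p\in D(X)$, so $\partial_W p=0$ for every long subsequence $W$ of $X$. I want to show $\partial_Y p\in D(X\setminus Y)$, i.e. $\partial_Z(\partial_Y p)=0$ for every long subsequence $Z$ of $X\setminus Y$. But $\partial_Z\partial_Y p = \partial_{Z\cup Y} p$, where $Z\cup Y$ is the concatenated subsequence of $X$. The key observation is that if $Z$ is long in $X\setminus Y$ — meaning $(X\setminus Y)\setminus Z$ does not span $V$ — then $Z\cup Y$ is long in $X$, since $X\setminus(Z\cup Y) = (X\setminus Y)\setminus Z$ fails to span. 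Hence $\partial_{Z\cup Y}p=0$ by the hypothesis on $p$, and since the operators $\partial_a$ commute this is exactly $\partial_Z(\partial_Y p)=0$. Also $\partial_Y p$ is a polynomial function on $V$ (being a derivative of the polynomial $p$), so it genuinely lies in $D(X\setminus Y)$, which is finite-dimensional because $X\setminus Y$ spans $V$.

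For the statement about $DM(X)$ the argument is formally identical with $\nabla$ replacing $\partial$. Take $K\in DM(X)$, so $\nabla_W K=0$ for every long subsequence $W$ of $X$. Given a long subsequence $Z$ of $X\setminus Y$, the subsequence $Z\cup Y$ is long in $X$ by the same spanning computation, and since the translation operators $t_a$ (hence the $\nabla_a=\mathrm{id}-t_a$) all commute, $\nabla_Z(\nabla_Y K)=\nabla_{Z\cup Y}K=0$. Thus $\nabla_Y K\in DM(X\setminus Y)$; and $\nabla_Y K$ is $\Z$-valued since $K$ is and $\nabla_a$ preserves $\Z$-valued functions, so it lies in the integral space $DM(X\setminus Y)$ as required (the same works verbatim for the real or complex versions).

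There is really no serious obstacle here: the whole lemma rests on the combinatorial fact that concatenating $Y$ onto a long subsequence of $X\setminus Y$ produces a long subsequence of $X$, together with the commutativity of the operators $\partial_a$ (resp.\ $\nabla_a$). The one point worth stating carefully is that it suffices to test the vanishing on cocircuits rather than all long subsequences, as already noted after the definition; but since we verify it directly for all long $Z$ this refinement is not even needed. The mild subtlety — that one should check $\partial_Y p$ and $\nabla_Y K$ land in the correct ambient function spaces (polynomials on $V$, resp.\ $\Z$-valued functions on $\Lambda$) and not merely satisfy the equations — is immediate from the fact that $\partial_a$ maps polynomials to polynomials and $\nabla_a$ maps $\mathcal C_\Z[\Lambda]$ to itself.
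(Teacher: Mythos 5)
Your proof is correct and is exactly the argument the paper has in mind: the paper simply states that the lemma "follows from the definitions," and your observation that $X\setminus(Y\cup Z)=(X\setminus Y)\setminus Z$, so that concatenating $Y$ onto a long subsequence of $X\setminus Y$ yields a long subsequence of $X$, combined with commutativity of the operators, is precisely that verification. (One micro-remark: since $D(X\setminus Y)$ is defined inside generalized functions, you do not even need to check that $\partial_Y p$ is a polynomial, though it is.)
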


\begin{remark}
In fact the operators $\partial_Y$ and $\nabla_Y$ are surjective onto  $D(X\setminus Y)$ and $DM(X\setminus Y)$ respectively. This is a more delicate statement, that we proved in \cite{dpv2} (and over $\Z$  for $DM$).
We will not use this  stronger statement here.
\end{remark}

If $\tau$ is an alcove contained  in $Z(X)$, then the polynomial $B_X^{\tau}$ is a non zero polynomial belonging to $D(X)$ (as seen from Equation (\ref{eq:nablabox})).

\begin{lemma}\label{daDMaD}
 If $K\in DM(X)$, then
 $B_X*_d K\in D(X).$

\end{lemma}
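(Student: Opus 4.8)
The plan is to verify the two defining difference equations for $D(X)$: we must show that for every long subsequence $Y$ of $X$ we have $\partial_Y (B_X *_d K) = 0$. Since it suffices to treat cocircuits, fix a cocircuit $Y = X \setminus H$ with $H$ an admissible hyperplane. The key computational tool is the commutation of $\partial_Y$ with the discrete convolution together with the identity \eqref{eq:nablabox}. First I would write
\[
\partial_Y (B_X *_d K) = \partial_Y \Bigl( \sum_{\lambda \in \Lambda} K(\lambda)\, t_\lambda B_X \Bigr) = \sum_{\lambda \in \Lambda} K(\lambda)\, t_\lambda (\partial_Y B_X),
\]
using that $\partial_Y$ is a constant-coefficient operator and hence commutes with the translations $t_\lambda$. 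By \eqref{eq:nablabox}, $\partial_Y B_X = \nabla_Y B_{X \cap H}$, so the right-hand side becomes $\nabla_Y \bigl( \sum_\lambda K(\lambda)\, t_\lambda B_{X \cap H} \bigr) = \nabla_Y \bigl( B_{X\cap H} *_d K \bigr)$, where I have again used that $\nabla_Y$, being built from translations, commutes with the discrete convolution.

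Next I would move the difference operator $\nabla_Y$ onto the discrete measure rather than the box spline: since $B_{X\cap H} *_d K$ is $B_{X\cap H}$ convolved with $\sum_\lambda K(\lambda)\delta_\lambda$, and $\nabla_a$ applied to a discrete convolution can be transferred to either factor, we get $\nabla_Y (B_{X\cap H} *_d K) = B_{X\cap H} *_d (\nabla_Y K)$. Now $Y$ is a long subsequence of $X$, and $K \in DM(X)$ means precisely that $\nabla_Y K = 0$ for all long $Y$. Hence $\nabla_Y K = 0$ and the whole expression vanishes. This shows $\partial_Y(B_X *_d K) = 0$ for every cocircuit $Y$, hence for every long subsequence, which is the defining property of $D(X)$; together with the fact that $B_X *_d K$ is (when $X$ spans) a piecewise polynomial function, and the elementary observation that a function killed by enough such differential operators is a genuine polynomial, we conclude $B_X *_d K \in D(X)$.

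I expect the only real subtlety to be bookkeeping about which objects are distributions versus honest functions, and making the transfer of $\nabla_Y$ between the two factors of the discrete convolution rigorous at the level of distributions — but this is exactly the kind of duality manipulation already set up in the excerpt (the formulas $\langle t_a B, test\rangle = \langle B, t_{-a} test\rangle$ and $\langle \partial_a B, test\rangle = -\langle B, \partial_a test\rangle$), so it is routine. One should also note that $D(X)$ here is understood with complex coefficients (the $D(X)_{\mathbb C}$ of the excerpt), matching $DM(X)_{\mathbb C}$, since $K$ may be complex-valued; this is just a remark, not an obstacle. The genuinely load-bearing inputs are \eqref{eq:nablabox} and the definition of $DM(X)$, and everything else is commutation of linear operators with convolution.
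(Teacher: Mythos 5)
Your proof is correct and follows essentially the same route as the paper's: reduce to cocircuits $Y=X\setminus H$, commute $\partial_Y$ past the semi-discrete convolution, apply \eqref{eq:nablabox} to replace $\partial_Y B_X$ by $\nabla_Y B_{X\cap H}$, transfer $\nabla_Y$ onto $K$, and use $\nabla_Y K=0$. The paper's proof is exactly this three-line computation, so there is nothing to add.
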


\begin{proof}

Let $H$ be an admissible hyperplane.
Then $$\partial_{X\setminus H} (B_X *_dK)=\partial_{X\setminus\underline H}B_X *_dK$$
$$=B_{X\cap H}*_d(\nabla_{X\setminus H }K)  =0. $$
 \end{proof}

The following result is proved in  \cite{DM1} (see also \cite{dp1}, \cite{myself}).

\begin{theorem}\label{theo:act}
If $K$ is the restriction to $\Lambda$ of a polynomial $k\in D(X)$, then  $B_X*_d K$ is equal to the polynomial
$I(X)k$.

Thus on the space $D(X)$, the operator  $B_X*' k:= B_X*_d K$, called semi--discrete convolution, is an isomorphism with inverse $Todd(X)$.

\end{theorem}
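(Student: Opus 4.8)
The plan is to prove the pointwise identity $B_X*_dK=I(X)k$ of polynomial functions on $V$ by comparing Fourier transforms; the hypothesis $k\in D(X)$ enters precisely through the orders to which the entire function $\widehat{B_X}$ of \eqref{eq:FB} vanishes on the lattice $\Gamma\subset\g=V^*$.

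First I would record that both sides are genuine polynomials. Since $\partial_Yk=0$ for every long subsequence $Y$ and $\nabla_a=\partial_a\,u_a(\partial_a)$ with $u_a$ an invertible power series, also $\nabla_Yk=0$, so $K=k|_\Lambda\in DM(X)_{\C}$ and Lemma \ref{daDMaD} gives $B_X*_dK\in D(X)$; likewise $I(X)k\in D(X)$ since $D(X)$ is stable under constant coefficient operators, and $I(X)k=B_X*_ck$ by Proposition \ref{dire}. So it suffices to show the tempered distributions $B_X*_dK=B_X*_c\mu_K$, with $\mu_K:=\sum_\lambda k(\lambda)\delta_\lambda$, and $B_X*_ck$ have equal Fourier transforms. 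As $B_X$ is compactly supported, $\widehat{B_X}$ is a slowly increasing smooth function, so $\widehat{B_X*_dK}=\widehat{B_X}\,\widehat{\mu_K}$; and since $k(\lambda)e^{i\ll\lambda,x\rr}=k(-i\partial_x)e^{i\ll\lambda,x\rr}$ while Poisson summation gives $\sum_\lambda e^{i\ll\lambda,x\rr}=(2\pi)^{s}\sum_{x_0\in\Gamma}\delta_{x_0}$ (unit constant, by $\mathrm{vol}(V/\Lambda)=1$), we get $\widehat{B_X*_dK}=(2\pi)^{s}\sum_{x_0\in\Gamma}\widehat{B_X}\cdot k(-i\partial_x)\delta_{x_0}$, a locally finite sum of distributions supported at the points of $\Gamma$ whose $x_0=0$ term is $\widehat{B_X}\,\widehat{k}=\widehat{B_X*_ck}$.

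It remains to see that the term at each $x_0\in\Gamma\setminus\{0\}$ vanishes. Put $Y_{x_0}:=X\setminus x_0^{\perp}$, the subsequence of $a\in X$ with $\ll a,x_0\rr\neq0$. Because $\ll a,x_0\rr\in2\pi\Z$ for every $a$, each factor $\frac{e^{i\ll a,x\rr}-1}{i\ll a,x\rr}$ of $\widehat{B_X}$ with $a\in Y_{x_0}$ vanishes along the hyperplane $\ll a,x-x_0\rr=0$, while the other factors are analytic and nonzero at $x_0$; so near $x_0$ one has $\widehat{B_X}(x)=\big(\prod_{a\in Y_{x_0}}\ll a,x-x_0\rr\big)g_{x_0}(x)$ with $g_{x_0}$ smooth. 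Using the apolarity pairing $\langle P,Q\rangle=[P(\partial)Q](0)$, under which multiplication by the linear form $\ll a,\cdot\rr$ is adjoint to $\partial_a$, a short computation (only finite jets of $\widehat{B_X}$ at $x_0$ matter, the distribution being supported there) shows that $\widehat{B_X}\cdot k(-i\partial_x)\delta_{x_0}$ is an operator built from $g_{x_0}$ applied to $(\partial_{Y_{x_0}}k)$, evaluated against $\delta_{x_0}$. But $X\cap x_0^{\perp}$ lies in a hyperplane, hence does not span $V$, so $Y_{x_0}$ is long and $\partial_{Y_{x_0}}k=0$: the term vanishes. Summing over $\Gamma$ gives $\widehat{B_X*_dK}=\widehat{B_X*_ck}=\widehat{I(X)k}$, i.e. $B_X*_dK=I(X)k$; and then $I(X)$ is invertible on $D(X)$ with inverse $Todd(X)=\prod_{a\in X}\frac{\partial_a}{1-e^{-\partial_a}}$, which is the operator assertion.

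The main obstacle is that middle computation: making precise, uniformly in $x_0$, the passage from "$\widehat{B_X}$ vanishes at $x_0$ with leading form $\prod_{a\in Y_{x_0}}\ll a,\cdot-x_0\rr$" to "$\partial_{Y_{x_0}}k=0$" through apolarity — and, more pedestrianly, justifying Poisson summation for $\widehat{B_X}$, which is only piecewise polynomial (dealt with by working throughout with the entire multiplier $\widehat{B_X}$ and the comb as a tempered distribution). A Fourier-free alternative is induction on $|X|$: remove a non-coloop $a$ and use $\partial_a(B_X*_dK)=B_{X\setminus a}*_d(\nabla_aK)$, $\partial_aI(X)=(1-e^{-\partial_a})I(X\setminus a)$, and $\nabla_ak\in D(X\setminus a)$ (Lemma \ref{lem:partialydx}) to reduce to $X\setminus a$; this shows $B_X*_dK-I(X)k$ is killed by every $\partial_a$, hence constant, after which one still has to evaluate that constant.
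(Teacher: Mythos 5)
Your argument is correct, but there is nothing in the paper to compare it with: Theorem \ref{theo:act} is not proved here, it is quoted from Dahmen--Micchelli \cite{DM1} (see also \cite{dp1}, \cite{myself}). What you have written is essentially a complete, self-contained version of the classical Fourier/Poisson-summation proof of that cited result. The decisive point is exactly right: for $x_0\in\Gamma\setminus\{0\}$ one has $\ll a,x_0\rr\in 2\pi\Z$ for all $a\in X$, so $\widehat{B_X}$ factors near $x_0$ as $\prod_{a\in Y_{x_0}}\ll a,x-x_0\rr$ times an analytic function with $Y_{x_0}=X\setminus x_0^{\perp}$ long; your ``apolarity'' step is then a two-line induction from the commutator identity $\ll a,x-x_0\rr\cdot P(-i\partial)\delta_{x_0}=i(\partial_aP)(-i\partial)\delta_{x_0}$, every homogeneous Taylor term of $\widehat{B_X}$ at $x_0$ being divisible by $\prod_{a\in Y_{x_0}}\ll a,x-x_0\rr$, and only the jet of order $\deg k$ matters since the distribution has finite order; hence each such term annihilates $k(-i\partial)\delta_{x_0}$ because $\partial_{Y_{x_0}}k=0$. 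The remaining analytic justifications (Poisson summation in $\mathcal S'$, multiplication of the lattice comb by the slowly increasing entire function $\widehat{B_X}$, distributing over the locally finite sum) are standard, and the final isomorphism statement is immediate once $B_X*'k=I(X)k$ is known, since $I(X)$ and $Todd(X)$ are mutually inverse constant-coefficient operators preserving $D(X)$. Your Fourier-free alternative is the route closer in spirit to \cite{myself}; as you honestly note, it only pins $B_X*_dK-I(X)k$ down to a constant depending linearly on $k$, and evaluating that constant requires an extra input (for instance the partition-of-unity identity $\sum_{\lambda}B_X(v-\lambda)=1$ together with a degree filtration argument), so the Fourier proof should be regarded as the complete one.
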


We will need some structure theory on $DM(X)$.

Let $\c$ be an alcove.
Let us consider any point $\epsilon\in \c$.
It is easy to see that the set $(\epsilon-Z(X))\cap \Lambda$ depends only of $\c$.
One  gives the following definition:

\begin{definition}
Let $\c$ be  an alcove. We denote by
$\delta(\mathfrak c\,|\,X):=(\epsilon-Z(X))\cap \Lambda$, where $\epsilon$ is any element of $\c$.
\end{definition}

We finally recall the following important  theorem of Dahmen-Micchelli \cite{DM1} ,\cite{DM3} (see \cite{dp1}).

 \begin{theorem}\label{theo:deltalambda}
 Let $\c$ be an alcove.
 For any $\xi\in \delta(\mathfrak c\,|\,X)$, there exists a unique Dahmen-Micchelli element $k_{\c}^{(\xi)} \in DM(X)$
 such that
$$k_\c^{(\xi)}(\xi)=1,$$
$$k_\c^{(\xi)}(\nu)=0$$
if $\nu\in \delta(\mathfrak c\,|\,X)$  and $\nu\neq \xi$.
 \end{theorem}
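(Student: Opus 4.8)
The plan is to recast the statement as the bijectivity of a single linear map. Consider the evaluation map
$$\mathrm{ev}_{\c}\colon DM(X)_{\C}\longrightarrow \C^{\delta(\c\,|\,X)},\qquad K\longmapsto \bigl(K(\xi)\bigr)_{\xi\in\delta(\c\,|\,X)}.$$
The theorem is exactly the assertion that $\mathrm{ev}_{\c}$ is an isomorphism: existence of $k_{\c}^{(\xi)}$ is surjectivity, uniqueness is injectivity, and the $k_{\c}^{(\xi)}$ are the preimages of the standard basis vectors. The integral statement ($k_{\c}^{(\xi)}\in DM(X)$) will follow by carrying the whole argument out over $\Z$ rather than $\C$.

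First I would reduce to injectivity by counting dimensions. By Dahmen--Micchelli (\cite{DM1}, see \cite{dp1}), $\dim_{\C}DM(X)_{\C}=\mathrm{vol}(Z(X))=\sum_{\sigma}|\det\sigma|$, the sum over bases $\sigma$ extracted from $X$. The target has the same dimension: since $\epsilon$ is affine regular, no lattice point lies on $\partial(\epsilon-Z(X))$ (such a point, lying on a face of the translated zonotope, would force $\epsilon$ onto an affine admissible hyperplane), so $\delta(\c\,|\,X)$ is the set of lattice points of a generic translate of $Z(X)$; decomposing the half--open zonotope into the half--open parallelepipeds attached to the bases $\sigma\subseteq X$, each contributing $|\det\sigma|$ lattice points, gives $|\delta(\c\,|\,X)|=\mathrm{vol}(Z(X))$. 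Thus it suffices to show $\mathrm{ev}_{\c}$ is injective.

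Injectivity I would prove by induction on $|X|$, with $X$ always spanning $V$. If $|X|=\dim V$, then $X$ is a basis, the cocircuits are the singletons, and $DM(X)_{\C}$ is the space of functions invariant under the sublattice $\Lambda_X=\sum_{a\in X}\Z a$; such a function is determined by, and takes arbitrary values on, any transversal of $\Lambda/\Lambda_X$, and $\delta(\c\,|\,X)$, being the lattice points of a generic translate of a fundamental parallelepiped of $\Lambda_X$, is such a transversal — so $\mathrm{ev}_{\c}$ is bijective, over $\Z$ as well. If $|X|>\dim V$, pick $a\in X$ contained in a circuit, so $X':=X\setminus\{a\}$ still spans, and let $\c'$ be the alcove of $(X',\Lambda)$ containing $\c$. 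From $Z(X)=Z(X')+[0,a]$ one reads off $\delta(\c'\,|\,X')\subseteq\delta(\c\,|\,X)$ and $\delta(\c'\,|\,X')-a\subseteq\delta(\c\,|\,X)$. Hence if $K\in DM(X)_{\C}$ vanishes on $\delta(\c\,|\,X)$, then $\nabla_aK$, which lies in $DM(X')$ by Lemma \ref{lem:partialydx}, vanishes on $\delta(\c'\,|\,X')$ and so is $0$ by induction. It remains to handle an $a$--periodic $K\in DM(X)_{\C}$ vanishing on $\delta(\c\,|\,X)$: such a $K$ descends to the contracted system on $\Lambda/\Z a\subset V/\R a$, and I would argue it still satisfies the Dahmen--Micchelli equations there and vanishes on the corresponding $\delta$-set, hence is $0$ by a further induction on $\dim V$.

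The hard part is exactly this descent: verifying that the descended function lies in $DM$ of the contracted list and vanishes on the right set of points, which requires keeping careful track of the admissible hyperplanes through $\R a$, of how the half--open zonotope of $X$ is swept out by that of $X'$ along $[0,a]$, and — the genuinely delicate point — of the fact that $a$ need not be primitive in $\Lambda$; this bookkeeping is the combinatorial heart of the Dahmen--Micchelli theorem, as carried out in \cite{dp1}. I note in passing that injectivity cannot be obtained cheaply from the box spline: if $K$ vanishes on $\delta(\c\,|\,X)$ then by Lemma \ref{daDMaD} the polynomial $B_X*_dK\in D(X)$ vanishes on the open alcove $\c$, hence identically, but $K\mapsto B_X*_dK$ is not injective on $DM(X)$ unless $X$ is unimodular, so this recovers only a strictly weaker statement. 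Alternatively one can bypass the induction by using the duality between $DM(X)_{\C}$ and $\C[\Lambda]/\mathfrak I(X)$, with $\mathfrak I(X)$ the ideal generated by the products $\prod_{a\in Y}(1-e^{ia})$ over long subsequences $Y$, and a standard--monomial basis of this quotient indexed by $\delta(\c\,|\,X)$.
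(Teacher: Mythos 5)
The paper does not prove this statement: it is recalled verbatim from Dahmen--Micchelli (\cite{DM1}, \cite{DM3}; see also \cite{dp1}), so there is no in-paper argument to compare yours against. Judged on its own terms, your outline is the standard deletion--contraction strategy, and the parts you actually carry out are sound: the reduction to injectivity of $\mathrm{ev}_{\c}$ via $\dim DM(X)_{\C}=\mathrm{vol}(Z(X))=|\delta(\c\,|\,X)|$ is correct (granting the Dahmen--Micchelli dimension formula as external input, which is legitimate provided your source does not itself derive that formula from the interpolation theorem you are proving); the base case $|X|=\dim V$ is correct; and the deletion step --- $\nabla_aK\in DM(X\setminus\{a\})$ vanishes on $\delta(\c'\,|\,X\setminus\{a\})$ because both $\delta(\c'\,|\,X')$ and $\delta(\c'\,|\,X')-a$ sit inside $\delta(\c\,|\,X)$ --- is a clean and correct observation.

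But the proof is not complete, and you say so yourself. Everything reduces to showing that an $a$-periodic element of $DM(X)_{\C}$ vanishing on $\delta(\c\,|\,X)$ is zero, and the descent to $\Lambda/\Z a$ is precisely where the content of the theorem is concentrated: when $a$ is not primitive the function does not descend to a lattice in $V/\R a$ (only to a quotient with torsion), the identification of the cocircuits of the contracted list with images of long subsequences of $X$ needs an argument, and one must check that $\delta(\c\,|\,X)$ maps onto the relevant $\delta$-set downstairs. None of this is done, and ``I would argue'' is not a proof; the alternative route via $\C[\Lambda]/\mathfrak I(X)$ and standard monomials is likewise only named, not executed. A second, quieter gap is integrality: your existence statement comes from a dimension count, which yields surjectivity of $\mathrm{ev}_{\c}$ only over $\C$ (or $\Q$). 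An injective $\Z$-linear map between free $\Z$-modules of equal rank need not be surjective, so $k_{\c}^{(\xi)}\in DM(X)$ (rather than $DM(X)\otimes\Q$) does not ``follow by carrying the argument out over $\Z$'' unless you establish $\Z$-surjectivity directly --- e.g.\ via the surjectivity of $\nabla_a:DM(X)\to DM(X\setminus\{a\})$ over $\Z$, which the paper itself flags (remark after Lemma \ref{lem:partialydx}) as a delicate result proved elsewhere. In short: right architecture, honest bookkeeping of where the difficulty lies, but the two hard points (contraction and integrality) are exactly the ones left open.
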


\subsection{Proof of the inversion formula}\label{proofinversion}

After these definitions, let us return to the proof of the inversion formula in the unimodular case.
We consider an alcove $\c$ contained in $Z(X)$ and containing $0$ in its closure.

\begin{proof} By Remark \ref{soeq}  it is enough to prove i).

 By definition,
$ \lim_{\c}(Todd(X)_{pw}B_X) (\lambda)=(Todd(X)B_X^{\lambda+\c})(\lambda)$.
If $(\lambda+\c)\cap Z(X)=\emptyset$, then  $B_X^{\lambda+\c}=0$ so
  $\lim_{\c} (Todd(X)_{pw} B_X)(\lambda)=0.$

  We now fix a point $\lambda$ such that  the alcove
  $\lambda+\c$  does  intersect $Z(X)$.
  The point $\lambda=0$ is such a point, by our assumption on $\c$.

  The condition $(\lambda+\c)\cap Z(X)\neq \emptyset$ is equivalent to the fact that $0\in  \delta(\lambda+\c\,|\,X) .$
  Remark that $\lambda=\lambda+\epsilon-\epsilon,\epsilon\in\c$ is also in $ \delta(\lambda+\c\,|\,X)$.
By Theorem \ref{theo:deltalambda} there is a unique element $p_{\lambda,\c}:=k_{\lambda+\c}^{(0)}$ in $DM(X)$  coinciding with $\delta_0$ on $ \delta(\lambda+\c\,|\,X)$.

Let us compute
  $(B_X*_d p_{\lambda,\c})(v )$ with
  $v\in \lambda+\c$. Using  the definitions, for such a $v$, we have
 $$(B_X*_d p_{\lambda,\c})(v )=\sum_{\nu\in \Lambda} p_{\lambda,\c}(\nu) B_X (v-\nu) =\sum_{\nu\in  \delta(\lambda+\c\,|\,X)} p_{\lambda,\c}(\nu) B_X (v-\nu).$$
   The second equality follows from the fact that the support of $B_X$ is $Z(X)$. As on $ \delta(\lambda+\c\,|\,X)$,  $p_{\lambda,\c}$ vanishes except at $0$, we obtain
from Lemma \ref{daDMaD} $$B_X^{\lambda+\c}=B_X*_d p_{\lambda,\c}.$$

 At this point we  use the fact that $X$ is a  unimodular system, so that the restriction map from $D(X)$ to $DM(X)$ is an isomorphism. Thus $p_{\lambda,\c}$ is the restriction to $\Lambda$ of a polynomial still denoted by $p_{\lambda,\c}$ belonging to $D(X)$ and, by Theorem \ref{theo:act},   $B_X^{\lambda+\c}=I(X)p_{\lambda,\c}$.
It follows that $Todd(X) B_X^{\lambda+\c}=p_{\lambda,\c}$ and
$$p_{\lambda,\c}(\lambda)= \lim_{\c}(Todd(X)_{pw} B_X)(\lambda).$$
As $p_{\lambda,\c}(\lambda)=0$, when $\lambda\neq 0$  and   $p_{\lambda,\c}(0)=1$, this proves our claim.
\end{proof}

 \subsection{ Inversion formula: the general case}
We keep the notations of \S \ref{Nop}.
  $G$  is a torus with group of characters of $\Lambda$.
  For $g\in G$ and $\lambda\in \Lambda$, define $$X^g:=\{a\in X \, | g^{a}=1\},\quad G_\lambda:=\{g\in G\,|\, g^\lambda=1\}.$$

For each $a\in X$  the set $G_a$ is a subgroup of codimension 1, these  groups generate a {\em toric arrangement}  $\mathcal A_X$, formed by all connected components of the intersections  of these groups $G_a$. Of particular importance are the vertices of the arrangement which can also be described as follows.

\begin{definition}\label{def:toricv}
  We say that a point $g\in G$ is a {\em toric vertex} of the  arrangement $\mathcal A_X$ if $X^g$  generates $V$.
                  We denote by $\mathcal V(X)\subset G$ the set of toric vertices of the arrangement $\mathcal A_X$.
  \end{definition}

   If $g$ is a vertex,  there is a basis $\sigma$ of $V$ extracted from $X$ such that $g^{a}=1$, for all $a\in \sigma$. We thus see that   the set $\mathcal V(X)$ is finite.
We also see that, if $X$ is unimodular, then $\mathcal V(X)$ is reduced to $g=1$.

For $g\in G$, we think of $g^\lambda\in \mathcal C[\Lambda]$ and denote by $\hat g$ the  operator on $\mathcal C[\Lambda]$, given by  multiplication  by $g^\lambda$: $(\hat g K)(\lambda)=g^{\lambda} K(\lambda)$. If $\nu\in \Lambda$, then
$\hat g t_\nu \hat g^{-1}= g^{\nu} t_\nu$.

%
%\begin{remark}\marginpar{I think this is useless}
%Assume that the series $\Theta(t)=\sum_{\lambda} K(\lambda) t^{\lambda}$ converges as a generalized function on $G$.
%Then $\Theta(gt)=\sum_{\lambda} (\hat g K)(\lambda) t^{\lambda}$.
%\end{remark}
%

\bigskip

We introduce next the {\em twisted}  difference and differential   operators.

 We set, for a vector $a $ or  for a sequence $Y$ of elements of $\Lambda$,
\begin{equation}\label{nabf}
\nabla_a^g:=1-g^{-a}t_a,\quad \nabla(g,Y)=\prod_{a\in Y}\nabla_a^g ,
\end{equation}

\begin{equation}\label{nabf2}
D_a^g:=1-g^{-a}e^{-\partial_a},\quad D(g,Y)=\prod_{a\in Y}D_a^g.
\end{equation}

The operator $\nabla(g,Y)$ acts on functions on $\Lambda$.
One has the formula
\begin{equation}\label{eq:comgnabla}
{ \hat g}^{-1} \nabla_Y \hat g=\nabla(g,Y).
\end{equation}

The operator $\nabla(g,Y)$ being a  linear combination of translation operators acts also on piecewise polynomial functions on $V$.
The operator $D(g,Y)$ acts on  piecewise polynomial functions on $V$ by its local action $D(g,Y)_{pw}$.

{\bf Be careful}:
The operators $D(g,Y)$ and $\nabla(g,Y)$ coincide on $\C[V]$, but their action is  not the same on  piecewise polynomial functions.
Indeed  the operator $f\mapsto D(g,Y)_{pw}f$  respects the support of $f$, while the operator $\nabla(g,Y) f$  may move the support of $f$.
\smallskip

If $g^a\neq 1$, then  $D_a^g=(1-g^{-a})+g^{-a}(1-e^{-\partial_a})$ is an invertible operator on polynomial functions
with inverse given by  the series of differential operators
$$(D_a^g)^{-1}= (1-g^{-a})^{-1}\sum_{k=0}^{\infty} (-1)^k (\frac{g^{-a}}{1-g^{-a}})^k (1-e^{-\partial_a})^k.$$

If $Y\subset X\setminus X^g$, we have also
  $D(g,Y)^{-1}=\prod_{a\in Y} (D_a^g)^{-1},$  an infinite series of differential operators.

\smallskip

For $K\in \mathcal C[\Lambda] $
and $g\in \mathcal V(X)$,  we define the  function
\begin{equation}\label{eq:defomega}
\omega_g(K):=B_{X^g}*_d \left(\hat g^{-1}\nabla_{X\setminus X^g}K\right).
\end{equation}

The function $\omega_g(K)$ is a piecewise polynomial function on $V$ with respect to $(X^g,\Lambda)$, thus a fortiori with respect to $(X,\Lambda)$.

\begin{theorem}\label{theo:main}

Let $\c$ be an alcove in $V$ containing $0$ in its closure and contained in $Z(X)$.
  Then
\begin{enumerate}[i)]

\item
 $\delta_0=\sum_{g\in \mathcal V(X)} \hat g \lim_{\c} \left(D(g, X\setminus X^g)^{-1} Todd(X^g)\right)_{pw}  (\nabla(g,X\setminus X^g)B_{X^g}).$

\item
For any $K\in \mathcal C[\Lambda]$,
  one has
the inversion formula:

$$K=\sum_{g\in \mathcal V(X)} \hat g \lim_{\c} \left(D(g, X\setminus X^g)^{-1} Todd(X^g)\right)_{pw}  \omega_g(K).$$

\end{enumerate}
\end{theorem}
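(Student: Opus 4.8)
The plan is to reduce Theorem \ref{theo:main} to the unimodular case Theorem \ref{theo:inversionuni} by a ``localization at each toric vertex'' argument, in exactly the same spirit as the proof of the inversion formula. As in Remark \ref{soeq}, by linearity it suffices to prove part i), i.e.\ the case $K=\delta_0$; part ii) then follows by writing $K=\sum_\nu K(\nu)\,t_\nu\delta_0$ and using that all the operators involved ($\hat g$, $\lim_\c$, $D(g,\cdot)^{-1}_{pw}$, $Todd(\cdot)_{pw}$ and $B_{X^g}*_d(\cdot)$) are defined and behave compatibly under translation by $\Lambda$. Concretely, one checks $\omega_g(t_\nu\delta_0)(v)$ is a translate of $\omega_g(\delta_0)$ twisted by a scalar $g^{\nu}$, and that $\hat g$ conjugates $\nabla_Y$ into $\nabla(g,Y)$ by \eqref{eq:comgnabla}, so the whole right-hand side is translation-equivariant.

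For part i), I would follow the local computation used in \S\ref{proofinversion}. Fix $\lambda\in\Lambda$ such that $(\lambda+\c)\cap Z(X)\neq\emptyset$; for other $\lambda$ every term vanishes because the relevant box spline $B_{X^g}$ (supported on $Z(X^g)\subset Z(X)$) contributes $0$ on that alcove. On the alcove $\lambda+\c$, the key identity to establish is
$$
\sum_{g\in\mathcal V(X)} \hat g\, \Bigl(D(g,X\setminus X^g)^{-1}Todd(X^g)\Bigr)_{pw}\bigl(\nabla(g,X\setminus X^g)B_{X^g}\bigr)^{\lambda+\c} \ =\ \bigl(\delta_0\text{ restricted to }\delta(\lambda+\c\,|\,X)\bigr),
$$
i.e.\ the polynomial $p_{\lambda,\c}=k_{\lambda+\c}^{(0)}\in DM(X)$ of Theorem \ref{theo:deltalambda}, evaluated at $\lambda$. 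The natural route is: first identify, for each vertex $g$, the restriction $(\nabla(g,X\setminus X^g)B_{X^g})^{\lambda+\c}$ as a polynomial lying in $D(X^g)$ (using $\partial_Y B_{X^g}=\nabla_Y B_{X^g\setminus Y}$ type relations and the fact that $\nabla(g,\cdot)$ is a combination of translations); then apply Theorem \ref{theo:act} for the unimodular-at-$g$ system $X^g$ to see that $Todd(X^g)$ inverts the semi-discrete convolution by $B_{X^g}$; and finally invert $D(g,X\setminus X^g)$ by its explicit geometric series (valid since $g^a\neq 1$ for $a\in X\setminus X^g$). Summing over vertices should reproduce the ``$g$-isotypic'' decomposition of $\delta_0$ in $DM(X)$, which is precisely the Dahmen--Micchelli structure: $DM(X)_{\mathbb C}=\bigoplus_{g\in\mathcal V(X)}\hat g\cdot DM(X^g)_{\mathbb C}$ (each $X^g$ unimodular when intersected with the sublattice it generates — one should be careful here and may need to pass to the appropriate sublattice, or argue directly with the characterization of $p_{\lambda,\c}$ by its values on $\delta(\lambda+\c\,|\,X)$).

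The main obstacle, I expect, is the bookkeeping of the twisted operators on piecewise polynomial functions versus on polynomials: the Warning before Definition \ref{oppw} and the ``Be careful'' remark show that $D(g,Y)_{pw}$ and $\nabla(g,Y)$ genuinely differ, and the statement mixes both ($\nabla(g,X\setminus X^g)$ applied to $B_{X^g}$ as a distribution/combination of translates, then $D(g,X\setminus X^g)^{-1}Todd(X^g)$ applied alcove-by-alcove). The careful point is that on each individual alcove $\tau=\lambda+\c$ the function $\nabla(g,X\setminus X^g)B_{X^g}$ agrees with a single polynomial, and on polynomials $D(g,Y)=\nabla(g,Y)$, so once restricted to the alcove the two worlds coincide and Theorem \ref{theo:act} applies verbatim; making this transition rigorous — i.e.\ that applying $D(g,X\setminus X^g)^{-1}_{pw}Todd(X^g)_{pw}$ to the alcove-polynomial of $\nabla(g,X\setminus X^g)B_{X^g}$ yields the alcove-polynomial of a genuine element of $\hat g\,DM(X^g)$ — is the heart of the argument. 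The second, more structural, difficulty is verifying that the vertex sum actually reconstructs $p_{\lambda,\c}$; I would do this by showing the right-hand side, call it $q_{\lambda,\c}$, lies in $DM(X)$ and agrees with $\delta_0$ on $\delta(\lambda+\c\,|\,X)$, and then invoke the uniqueness in Theorem \ref{theo:deltalambda}. Everything else — vanishing of off-support terms, translation-equivariance, convergence of the $D_a^g$ inverse series on polynomials — is routine.
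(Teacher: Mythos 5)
Your overall strategy is the paper's: reduce ii) to i) by translation equivariance, kill the alcoves $\lambda+\c$ missing $Z(X)$ by support considerations, and on an alcove meeting $Z(X)$ compare the vertex sum with the Dahmen--Micchelli element $p_{\lambda,\c}=k_{\lambda+\c}^{(0)}$ of Theorem \ref{theo:deltalambda}, using the support argument to replace $B_{X^g}$ by $B_{X^g}*_d\hat g^{-1}p_{\lambda,\c}$ on the relevant translates of the alcove. However, there is one concrete point where your outline goes wrong and which you flagged but did not resolve correctly: the structure theorem. The decomposition is $DM(X)_{\mathbb C}=\oplus_{g\in\mathcal V(X)}\hat g\,D(X^g)_{\mathbb C}$ (Theorem \ref{convver}), where $D(X^g)$ is the space of \emph{polynomial} solutions of the differential equations --- not $\oplus_g\hat g\,DM(X^g)_{\mathbb C}$ as you wrote. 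This is not cosmetic: it is exactly what dissolves your worry about unimodularity of $X^g$. Writing $p_{\lambda,\c}=\sum_h\hat hK_h$ with each $k_h\in D(X^h)_{\mathbb C}$ a genuine polynomial, Theorem \ref{theo:act} applies to $\omega_g(\hat gK_g)$ with no unimodularity hypothesis whatsoever, giving $\omega_g(\hat gK_g)=D(g,X\setminus X^g)I(X^g)k_g$, whence $k_g=D(g,X\setminus X^g)^{-1}Todd(X^g)\omega_g(p_{\lambda,\c})$. No passage to a sublattice is needed or wanted.

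The second, related, omission is the orthogonality computation that justifies your phrase ``summing over vertices should reproduce the $g$-isotypic decomposition.'' One must prove that $\omega_g(\hat hK_h)=0$ for $h\neq g$; this is the content of Lemma \ref{fornn} and Proposition \ref{fondi}, and it rests on two facts you never invoke: (a) $\nabla_{X\setminus X^g}\hat hD(X^h)\subset\hat h\,D(X^h\cap X^g)$ when $h\in\mathcal V(X^g)$ (and is zero otherwise), so that $\hat g^{-1}\nabla_{X\setminus X^g}\hat hK_h$ lies in $\widehat{g^{-1}h}\,D(X^g\cap X^{g^{-1}h})\subset E(X^g)$ for $h\neq g$; and (b) $B_{X^g}*_dE(X^g)=0$, which is part (3) of Theorem \ref{convver}. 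Without these the vertex sum is not identified with $p_{\lambda,\c}(\lambda)$. Your alternative suggestion --- show directly that the right-hand side lies in $DM(X)$ and equals $\delta_0$ on $\delta(\lambda+\c\,|\,X)$, then invoke uniqueness --- does not obviously work either, since the right-hand side is defined alcove by alcove and there is no a priori reason it satisfies the global difference equations; the paper instead evaluates it at the single point $\lambda$ and matches it with $p_{\lambda,\c}(\lambda)$, where $p_{\lambda,\c}$ itself depends on $\lambda$. Your handling of the $D(g,Y)_{pw}$ versus $\nabla(g,Y)$ distinction is correct as far as it goes.
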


\begin{remark}
The first assertion is a particular case of the second. Indeed, for $K=\delta_0$, as $\hat g\delta_0=\delta_0$, we have
  $$\omega_g(K)=B_{X^g}*_d(\hat g^{-1} \nabla_{X\setminus X^g}\hat g\delta_0)=B_{X^g}*_d\nabla(g,X\setminus X^g)\delta_0$$
  thus

  \begin{equation}\label{omegag}
  \omega_g(\delta_0)=\nabla(g,X\setminus X^g)B_{X^g}.
  \end{equation}

We will see later that the second assertion is a  consequence of the first.

\end{remark}

In order to prove Theorem \ref{theo:main}, we shall follow essentially the same method of proof used in the unimodular case.

 Notice that the restriction $K$  of a function $k\in D(X)$ to $\Lambda$ is an element of $DM(X)$.
Recall the following structure theorem.

\begin{theorem}(see \cite{dp1} Formula 16.1 and Theorem 17.15)\label{convver}
\begin{enumerate}

\item
   If $g\in \mathcal V(X)$ is a  toric vertex of the arrangement $X$ and $k\in D(X^g)$, the function
  $ \hat g K$  belongs to  $DM(X)$.\\

  \item
We have $DM(X)_{\Bbb C}=\oplus_{g\in \mathcal V(X)} \hat g D(X^g)_{\Bbb C}.$
\\

  \item Let $E(X)=\oplus_{g\neq 1}  \hat g D(X^g).$
Then, for any  $K\in E(X)$,
$B_X*_d K=0.$\\
\end{enumerate}

\end{theorem}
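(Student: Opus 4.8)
The plan is to prove the three assertions in order: the first is a direct computation with the twisted operators $\nabla(g,Y)$ of \eqref{nabf}, the second combines the first with a dimension count, and the third — the real content — is a Poisson summation argument.

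\textbf{(1)} Since the difference equations defining $DM(X)$ need only be imposed on cocircuits $X\setminus H$ ($H$ an admissible hyperplane), it suffices to show $\nabla_{X\setminus H}(\hat g K)=0$, which by \eqref{eq:comgnabla} equals $\hat g\,\nabla(g,X\setminus H)K$. As $g^a=1$ for $a\in X^g$ we have $\nabla_a^g=\nabla_a$ there, so $\nabla(g,X\setminus H)=\nabla(g,(X\setminus H)\setminus X^g)\cdot\nabla_{X^g\setminus H}$ (the factors commute). Now $X^g$ spans $V$ because $g\in\mathcal V(X)$ (Definition \ref{def:toricv}), while $X^g\cap H$ lies in a hyperplane and does not; hence $X^g\setminus H$ is a long subsequence of $X^g$, so $\partial_{X^g\setminus H}k=0$, and since on polynomials $\nabla_a=1-e^{-\partial_a}$ is $\partial_a$ times a unit, also $\nabla_{X^g\setminus H}k=0$, whence $\nabla_{X^g\setminus H}K=0$ after restriction to $\Lambda$. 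Thus $\nabla_{X\setminus H}(\hat g K)=0$ and $\hat g K\in DM(X)_{\C}$.

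\textbf{(2)} By (1), $\bigoplus_g\hat g D(X^g)\subseteq DM(X)_{\C}$. The sum is direct: each $\hat g D(X^g)$ is stable under the translations $t_\nu$, $\nu\in\Lambda$, on which $t_\nu$ acts as $g^{-\nu}$ times the unipotent operator $t_\nu|_{D(X^g)}=e^{-\partial_\nu}$; choosing $\nu$ with the numbers $g^{-\nu}$ ($g\in\mathcal V(X)$) pairwise distinct puts the summands in distinct generalized $t_\nu$–eigenspaces. It remains to compare dimensions: by Theorem \ref{theo:deltalambda} (and \cite{dp1}) $\dim_{\C}DM(X)_{\C}=\#\,\delta(\c\,|\,X)=\mathrm{vol}\,Z(X)$, whereas $\sum_{g\in\mathcal V(X)}\dim_{\C}D(X^g)_{\C}=\sum_g\#\{\text{bases of }V\text{ extracted from }X^g\}$; exchanging the two sums and using $\dim D(Y)=\#\{\text{bases in }Y\}$ this equals $\sum_\sigma\#\{g\in G: g^a=1\ \forall a\in\sigma\}=\sum_\sigma|\det\sigma|=\mathrm{vol}\,Z(X)$, the sums running over bases $\sigma$ extracted from $X$, with $\#\{g: g^\sigma=1\}=[\Lambda:\sum_{a\in\sigma}\Z a]=|\det\sigma|$.

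\textbf{(3)} By (2) it is enough to show $B_X*_d\hat g K=0$ when $K=k|_\Lambda$, $k\in D(X^g)$, for a single $g=e^{x_0}\in\mathcal V(X)$ with $g\neq1$, i.e. $x_0\notin\Gamma$. Poisson summation over $\Lambda$ (dual lattice $\Gamma$, covolume $1$) applied to $v\mapsto\sum_\lambda g^\lambda k(\lambda)B_X(v-\lambda)$, together with \eqref{eq:FB}, produces an identity $(B_X*_d\hat g K)(v)=\sum_{\gamma\in\Gamma}e^{i\langle x_0-\gamma,\,v\rangle}\,\big[\,k(v+i\partial_z)\,\widehat{B_X}\,\big](\gamma-x_0)$, where $\widehat{B_X}(z)=\prod_{a\in X}\frac{e^{i\langle a,z\rangle}-1}{i\langle a,z\rangle}$ and $k(v+i\partial_z)$ is the polynomial $k$ applied to the operator $v+i\partial_z$ in the $z$–variable. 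Fix $\gamma$ and set $z_0=\gamma-x_0\neq0$. For $a\in X\setminus X^g$ the $a$–factor of $\widehat{B_X}$ is an analytic unit near $z_0$ (as $\langle a,x_0\rangle\notin2\pi\Z$); for $a\in X^g$ one has $\langle a,z_0\rangle\in2\pi\Z$, and the factor is an analytic unit if $\langle a,z_0\rangle=0$ and has a simple zero along $\{\langle a,\,\cdot-z_0\rangle=0\}$ otherwise. Hence near $z_0$ one may write $\widehat{B_X}(z)=\big(\prod_{a\in S}\langle a,z-z_0\rangle\big)\,\psi(z)$ with $S:=X^g\setminus z_0^\perp$ and $\psi(z_0)\neq0$. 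Because $z_0\neq0$, the complement $X^g\cap z_0^\perp$ lies in a proper hyperplane and fails to span $V$, so $S$ is a long subsequence of $X^g$ and $\partial_S k=0$. Evaluating $k(v+i\partial_z)\widehat{B_X}$ at $z_0$ by Leibniz, only the terms in which $|S|$ of the $z$–derivatives fall on $\prod_{a\in S}\langle a,z-z_0\rangle$ contribute, and summing them rewrites the value as a constant–coefficient operator in $v$ applied to $\sum_{|\mu|=|S|}\big([w^\mu]\prod_{a\in S}\langle a,w\rangle\big)\partial_v^\mu\,k=\partial_S k=0$. So every summand vanishes, $B_X*_d\hat g K=0$, and in particular $B_X*_d$ annihilates $E(X)=\bigoplus_{g\neq1}\hat g D(X^g)$. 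The main obstacle is exactly this last point: one must recognize that the order of vanishing of $\widehat{B_X}$ at each torsion translate $\gamma-x_0$ is controlled precisely by the long subsequence $X^g\setminus(\gamma-x_0)^\perp$ of $X^g$, and that this is exactly the differential relation defining $D(X^g)$ — it is this numerical coincidence that forces the cancellation (Assertion (2) additionally rests on the nontrivial identity $\dim DM(X)_{\C}=\mathrm{vol}\,Z(X)$ from \cite{dp1}).
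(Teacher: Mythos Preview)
The paper does not give a proof of this theorem; it is quoted from \cite{dp1} (Formula 16.1 and Theorem 17.15). So there is nothing in the present paper to compare your argument against, and one can only assess your proof on its own terms.

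Your arguments for (1) and (2) are correct. In (1) the key point, that $X^g\setminus H$ is long in $X^g$ because $X^g\cap H$ lies in a hyperplane, is exactly right. In (2) the directness via a single translation $t_\nu$ with the $g^{-\nu}$ pairwise distinct is clean (such $\nu$ exists since a finite union of proper subgroups cannot cover $\Lambda$), and the dimension count $\sum_{g}\dim D(X^g)=\sum_{\sigma}|\det\sigma|=\mathrm{vol}\,Z(X)=\dim DM(X)_{\C}$ is the standard zonotope decomposition combined with the results you cite from \cite{dp1}.

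For (3) the idea is right and the local vanishing computation is correct, but the pointwise Poisson identity as written is not justified: after applying $k(v+i\partial_z)$ the decay of $\widehat{B_X}$ need not make $\sum_{\gamma}$ absolutely convergent. The fix is to read the whole argument distributionally. Set $m:=\sum_{\lambda}g^{\lambda}k(\lambda)\delta_\lambda\in\mathcal S'(V)$; then $B_X*_d\hat gK=B_X*_c m$ and, since $B_X$ is compactly supported with $\widehat{B_X}\in C^\infty$ of bounded derivatives, $\widehat{B_X*_c m}=\widehat{B_X}\cdot\hat m$. By Poisson summation for tempered distributions, $\hat m$ is a \emph{locally finite} sum of derivatives of Dirac masses at the points $\gamma-x_0$, so no convergence is needed, and your Leibniz computation shows that the contribution at each $\gamma-x_0$ vanishes because $\partial_S k=0$ with $S=X^g\setminus(\gamma-x_0)^\perp$ long in $X^g$. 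In fact, once you know from (1) and Lemma~\ref{daDMaD} that $p:=B_X*_d\hat gK\in D(X)$ is a polynomial, a pure support argument already finishes: $\hat p$ is supported at $0$, while $\widehat{B_X}\cdot\hat m$ is supported in $\Gamma-x_0$, which misses $0$ since $g\neq 1$; hence $\hat p=0$ and $p=0$. Your detailed computation is thus more than is strictly necessary, but it is correct and makes transparent the role of the differential equations defining $D(X^g)$.
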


Given an element $g\in \mathcal V(X)$,  recall that  the map $\nabla_{X\setminus X^g}$ sends $DM(X )$ to  $DM(X^{g })$.
We have
\begin{lemma}\label{fornn} Take $g,h\in \mathcal V(X)$.
\begin{enumerate}[i)]
\item If $h\in \mathcal V(X^g)$, then
 \begin{equation}
\nabla_{X\setminus X^g}\hat h D(X^{h })\subset \hat h  D(X^{h}\cap X^g ).\end{equation}

\item If $h\notin \mathcal V(X^g)$, then $$\nabla_{X\setminus X^g}\hat h D(X^{h})=0.$$

\end{enumerate}
\end{lemma}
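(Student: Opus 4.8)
\textbf{Proof plan for Lemma \ref{fornn}.}
The plan is to analyze the operator $\nabla_{X\setminus X^g}$ applied to the summand $\hat h D(X^h)$ by factoring the difference operator according to which of its constituent factors $\nabla_a^{?}$ become "twisted" or "trivial" when conjugated past $\hat h$. First I would write $X\setminus X^g = (X^h\setminus X^g)\cup(X\setminus(X^g\cup X^h))$ — more precisely, I want to split $X\setminus X^g$ into the sublist $Y_1$ of vectors $a$ with $h^a=1$ and the sublist $Y_2$ of vectors $a$ with $h^a\ne 1$ — so that $\nabla_{X\setminus X^g}=\nabla_{Y_1}\nabla_{Y_2}$. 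Using the conjugation identity \eqref{eq:comgnabla}, $\hat h^{-1}\nabla_{X\setminus X^g}\hat h=\nabla(h,X\setminus X^g)=\prod_{a\in Y_1}\nabla_a^h\cdot\prod_{a\in Y_2}\nabla_a^h$, so that $\nabla_{X\setminus X^g}\hat h = \hat h\,\nabla(h,Y_1)\,\nabla(h,Y_2)$, and it suffices to understand how $\nabla(h,Y_1)\nabla(h,Y_2)$ acts on $D(X^h)$.

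For part (i), the key point is that $D(X^h)$ is a module over the ring of constant-coefficient differential operators, and on polynomials the twisted difference operator $\nabla_a^h=1-h^{-a}t_a$ coincides with $D_a^h=1-h^{-a}e^{-\partial_a}$; when $h^a\ne1$ this operator $D_a^h$ is invertible on $\C[V]$ (as recalled in the paragraph preceding the lemma), hence $\nabla(h,Y_2)$ is an invertible operator on $D(X^h)_{\C}$ viewed inside $\C[V]$, but — and this is the subtle part — its inverse is an infinite series of differential operators, so I must argue that it maps $D(X^h)$ (a finite-dimensional, differentiation-stable space) into itself; this holds because $D(X^h)$ is preserved by all constant-coefficient differential operators. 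For the factors in $Y_1$, i.e.\ those $a$ with $h^a=1$, we have $\nabla_a^h=1-t_a=\nabla_a$, the untwisted difference operator, and $Y_1=X^h\setminus X^g$ exactly when $h\in\mathcal V(X^g)$ (so that $X^h\cap X^g$ still spans and $Y_1$ is a genuine "removable" sublist relative to $X^h$); then Lemma \ref{lem:partialydx} (in its $\nabla$ form, applied with the list $X^h$ and the sublist $Y_1=X^h\setminus X^g$, whose complement $X^h\cap X^g$ spans $V$ since $h\in\mathcal V(X^g)$) gives $\nabla_{Y_1}D(X^h)\subset DM(X^h\cap X^g)$, and restricting a polynomial in $D(X^h\cap X^g)$ to $\Lambda$ lands in that Dahmen--Micchelli space. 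Combining, $\nabla_{X\setminus X^g}\hat h D(X^h)\subset \hat h\,D(X^h\cap X^g)$ after reassembling the $\hat h$ factor.

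For part (ii), suppose $h\notin\mathcal V(X^g)$. Then by definition the list $X^h\cap X^g$ fails to span $V$, which means $Y_1=X^h\setminus X^g=X^h\setminus(X^h\cap X^g)$ is a \emph{long} subsequence of $X^h$ in the sense of the definition preceding Lemma \ref{lem:partialydx}. The plan here is: apply $\nabla(h,Y_2)^{-1}$ is not needed — instead note that after moving $\hat h$ out, $\nabla(h,Y_1)=\prod_{a\in Y_1}\nabla_a=\nabla_{Y_1}$ acts, and for $p\in D(X^h)$ the defining equation of $D(X^h)$ says $\partial_{Y_1}p=0$ because $Y_1$ is long in $X^h$; but I need the difference-operator statement $\nabla_{Y_1}p=0$ rather than $\partial_{Y_1}p=0$. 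These are not literally the same on a single polynomial, so the cleanest route is to first apply the invertible twisted piece $\nabla(h,Y_2)$ (landing in $D(X^h)$ still, as in part (i)), then observe that $\nabla_{Y_1}$ of any element of $D(X^h)$ lies in $DM(X^h\setminus Y_1)$ by Lemma \ref{lem:partialydx} — except that $X^h\setminus Y_1=X^h\cap X^g$ does not span, and the space $DM$ of a non-spanning list, or rather the relevant vanishing, forces the result to be zero; more directly, since $Y_1$ is long in $X^h$, Lemma \ref{lem:partialydx} does not even apply, and instead one uses that $D(X^h)$ restricted to $\Lambda$ is $DM(X^h)$ whose defining relations include $\nabla_{Y}=0$ for all long $Y$, in particular $\nabla_{Y_1}$ annihilates $DM(X^h)_{\C}$; hence $\nabla_{Y_1}\hat h D(X^h)=0$ after restriction.

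\textbf{Main obstacle.} I expect the genuine difficulty to be the bookkeeping between differential operators $\partial_a$ and difference operators $\nabla_a$ on \emph{polynomials versus functions on the lattice}: on $\C[V]$ the twisted operators $D_a^h$ and $\nabla_a^h$ agree and invertibility is clean, but the conclusion is phrased in terms of $D(X^h\cap X^g)$ (polynomials) while the vanishing "$Y$ long $\Rightarrow$ $\nabla_Y=0$" is native to $DM$ (lattice functions); one must pass back and forth via the restriction isomorphism $D(X^h)\xrightarrow{\sim} DM(X^h)$ of Theorem \ref{convver}(1) applied with the list $X^h$ in place of $X$ (so that $h$ plays the role of the identity vertex for $X^h$), keeping careful track that all intermediate spaces are finite-dimensional and stable under the relevant operators. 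The rest — the index bookkeeping of which vectors go into $Y_1$ versus $Y_2$, and reassembling the $\hat h$ — is routine.
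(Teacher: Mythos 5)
Your argument is correct and follows essentially the same route as the paper's proof: the same splitting of $X\setminus X^g$ into the part on which $h$ is trivial and its complement, the same conjugation identity to pull $\hat h$ out, the observation that the twisted factor (a finite combination of translations) preserves $D(X^h)$, Lemma \ref{lem:partialydx} for (i), and the fact that $X^h\setminus X^g$ is long in $X^h$ for (ii). One small correction for (i): to land in $\hat h\, D(X^{h}\cap X^g)$ (polynomials) rather than merely in $\hat h\, DM(X^{h}\cap X^g)\otimes\R$, you should invoke the differential half of Lemma \ref{lem:partialydx}, writing on polynomials $\nabla_{Y_1}=\partial_{Y_1}\circ\prod_{a\in Y_1}\frac{1-e^{-\partial_a}}{\partial_a}$, where the second factor is a constant-coefficient operator preserving $D(X^h)$.
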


\begin{proof}
Let $K$  be a function on $\Lambda$.
An operator $\nabla_Y$  acting on $\hat hK$ can be analyzed by decomposing $Y=Z\cup R$ into the part $Z$ of elements $a\in Y$  such that $h^a=1$ and the complement $R$. Then
\begin{equation}
\label{bbl}\nabla_Y \hat h K=\hat h (\hat h^{-1} \nabla_Z \hat h)  (\hat h^{-1} \nabla_R \hat h)  K=\hat h \nabla_Z \nabla(h,R)K.
\end{equation}
In particular we apply this to  $ Y={X\setminus X^g}$ which, for a point $h$ of the arrangement,  we separate into   the  subsequences $Z:=X^h\cap (X\setminus X^g) =X^h\setminus X^g $ and  $ R:=X\setminus (X^h\cup X^g)$. We obtain from \eqref{bbl} $$\nabla_{X\setminus X^g}\hat h D(X^{h })=\hat h  \nabla_{X^h\setminus X^g }\nabla(h, R) D(X^{h }).$$ The operator  $\nabla(h, R)$, a finite combination of translations, preserves the space $D(X^h)$. Thus we get \begin{equation}
\nabla_{X\setminus X^g}\hat h D(X^{h })\subset \hat h  \nabla_{X^h\setminus X^g } D(X^{h })= \hat h  \nabla_{X^h\setminus  X^g } D(X^{h }).\end{equation}
and {\it i)}  follows from Lemma \ref{lem:partialydx} .

Furthermore,
by definition, a point $h$ is a vertex of  the arrangement $X^g$ if and only if  the vectors in $X^h\cap X^g$ span $V$.
So if $h\notin \mathcal V(X^g)$,    $X^h\setminus X^g $  is a long subsequence of $X^h$ and
$\nabla_{X^h\setminus X^g } D(X^{h })=0$  getting  {\it ii)}.

\end{proof}

\begin{proposition}
If $K\in DM(X)$, then
$$\omega_g(K)=B_{X^g}*_d \left(\hat g^{-1}\nabla_{X\setminus X^g}K\right)\in D(X^g).$$\end{proposition}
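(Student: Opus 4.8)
The plan is to reduce the statement to the structure theorems already available. We want to show that for $K\in DM(X)$, the piecewise polynomial function $\omega_g(K)=B_{X^g}*_d(\hat g^{-1}\nabla_{X\setminus X^g}K)$ actually lies in the polynomial space $D(X^g)$. First I would note that by Theorem \ref{convver}(2), we may assume $K=\hat h\,\tilde k$ for a single toric vertex $h\in\mathcal V(X)$ and a polynomial $\tilde k\in D(X^h)$ (restricted to $\Lambda$), since both sides are linear in $K$ and the sum over such $h$ spans $DM(X)_{\Bbb C}$.

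Next I would apply Lemma \ref{fornn} to the term $\nabla_{X\setminus X^g}K=\nabla_{X\setminus X^g}\hat h D(X^h)$. There are two cases. If $h\notin\mathcal V(X^g)$, then part {\it ii)} of that lemma gives $\nabla_{X\setminus X^g}\hat h\,\tilde k=0$, hence $\omega_g(K)=0\in D(X^g)$ trivially. If $h\in\mathcal V(X^g)$, then part {\it i)} gives $\nabla_{X\setminus X^g}\hat h\,\tilde k\in\hat h\,D(X^h\cap X^g)$, so $\hat g^{-1}\nabla_{X\setminus X^g}K\in \hat g^{-1}\hat h\,D(X^h\cap X^g)$. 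Now $(\hat g^{-1}\hat h)=\widehat{g^{-1}h}$ is multiplication by $(g^{-1}h)^\lambda$, and $g^{-1}h$ is a point of the arrangement $\mathcal A_{X^g}$; since $h\in\mathcal V(X^g)$, the vectors in $X^h\cap X^g$ span $V$, i.e. $g^{-1}h$ is a toric vertex of $X^g$ with $(X^g)^{g^{-1}h}=X^h\cap X^g$ (here I use that $a\in X^g$ satisfies $(g^{-1}h)^a=h^a$ since $g^a=1$). So $\hat g^{-1}\nabla_{X\setminus X^g}K$ lies in $\widehat{g^{-1}h}\,D((X^g)^{g^{-1}h})$, which is exactly a summand of the decomposition in Theorem \ref{convver}(2) applied to the list $X^g$ in place of $X$. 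Therefore $\hat g^{-1}\nabla_{X\setminus X^g}K\in DM(X^g)_{\Bbb C}$.

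Finally, Lemma \ref{daDMaD} applied to the list $X^g$ says that convolving an element of $DM(X^g)$ with $B_{X^g}$ lands in $D(X^g)$. Hence $\omega_g(K)=B_{X^g}*_d(\hat g^{-1}\nabla_{X\setminus X^g}K)\in D(X^g)$, as claimed. The only mild subtlety — and the step I expect to need the most care — is the bookkeeping identifying $g^{-1}h$ as a toric vertex of the arrangement $\mathcal A_{X^g}$ with the correct isotropy sublist $(X^g)^{g^{-1}h}=X^h\cap X^g$, so that the relevant instances of Theorem \ref{convver} and Lemma \ref{daDMaD} apply verbatim to $X^g$; everything else is a direct assembly of results already in hand.
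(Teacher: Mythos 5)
Your argument is correct, but it takes a heavier route than the paper's. The paper proves the intermediate claim that $\hat g^{-1}\nabla_{X\setminus X^g}K\in DM(X^g)$ in two lines: $\nabla_{X\setminus X^g}K\in DM(X^g)$ is immediate from Lemma \ref{lem:partialydx} (with $Y=X\setminus X^g$, noting $X^g$ spans $V$ because $g$ is a toric vertex), and $\hat g^{-1}$ preserves $DM(X^g)$ because for $Y\subset X^g$ one has $\nabla_Y\hat g^{-1}=\hat g^{-1}\nabla(g^{-1},Y)=\hat g^{-1}\nabla_Y$, since $g^a=1$ on $X^g$; then Lemma \ref{daDMaD} applied to $X^g$ finishes. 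You instead invoke the full structure theorem \ref{convver} to decompose $K$ over the toric vertices $h$, run the case analysis of Lemma \ref{fornn}, and identify $g^{-1}h$ as a toric vertex of $\mathcal A_{X^g}$ with isotropy list $(X^g)^{g^{-1}h}=X^h\cap X^g$ — a bookkeeping step you carry out correctly. What your route buys is finer information (which summands of $K$ actually contribute to $\omega_g(K)$, namely only those with $h\in\mathcal V(X^g)$, and into which summand of $DM(X^g)_{\mathbb C}$ they land); this is precisely the computation the paper defers to Proposition \ref{fondi}, where it is genuinely needed to evaluate $\omega_g(\hat hK_h)$. For the membership statement alone, the paper's direct argument is shorter and avoids the structure theory entirely.
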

\begin{proof}
Indeed, $\nabla_{X\setminus X^g}K\in DM(X^g)$ and $\hat g^{-1}$ preserves $DM(X^g)$. Thus $\omega_g(K)$ is a polynomial belonging to $D(X^g)$ by Lemma \ref{daDMaD}.

\end{proof}

\begin{proposition}\label{fondi} Let $K\in DM(X)$. Write
   $K=\sum_ {g\in \mathcal V(X)} \hat g K_g$, with $k_g\in D(X^g)_{\mathbb C}$ restricting to $K_g$. Then  we have
 $$k_g=D(g,X\setminus X^g)^{-1} Todd(X^g)\omega_g(K).$$

 \end{proposition}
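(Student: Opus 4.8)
The plan is to compute $\omega_g(K)$ using the decomposition $K=\sum_{h\in\mathcal V(X)}\hat hK_h$ and Lemma \ref{fornn}, and then to invert the operator $D(g,X\setminus X^g)^{-1}Todd(X^g)$ on $D(X^g)$ to isolate $k_g$. First I would apply $\nabla_{X\setminus X^g}$ term by term: since $\nabla_{X\setminus X^g}\hat hD(X^h)=0$ unless $h\in\mathcal V(X^g)$ (Lemma \ref{fornn} ii)), and for $h\in\mathcal V(X^g)$ it lands in $\hat hD(X^h\cap X^g)$ (Lemma \ref{fornn} i)), we get
\[
\hat g^{-1}\nabla_{X\setminus X^g}K=\sum_{h\in\mathcal V(X^g)}\hat g^{-1}\nabla_{X\setminus X^g}\hat hK_h,
\]
a sum of elements of $\hat g^{-1}\hat hD(X^h\cap X^g)$. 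Then I would convolve with $B_{X^g}$: by part (3) of Theorem \ref{convver} applied to the system $X^g$ (whose set of vertices is $\mathcal V(X^g)$), the contribution of every $h\neq g$ in $\mathcal V(X^g)$ to $B_{X^g}*_d(\hat g^{-1}\hat hK_h)$ vanishes — here one must check that $\hat g^{-1}\hat h$ represents a nontrivial vertex of $X^g$ when $h\neq g$, so that $\hat g^{-1}\hat hD(X^h\cap X^g)\subset E(X^g)$. Hence only the $h=g$ term survives, giving $\omega_g(K)=B_{X^g}*_d\bigl(\hat g^{-1}\nabla_{X\setminus X^g}\hat gK_g\bigr)=B_{X^g}*_d\bigl(\nabla(g,X\setminus X^g)K_g\bigr)$, using \eqref{eq:comgnabla}.

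Next I would express $\nabla(g,X\setminus X^g)$ and $B_{X^g}*_d$ as operators on the polynomial space $D(X^g)$: on polynomials $\nabla(g,X\setminus X^g)$ coincides with $D(g,X\setminus X^g)$ (the "Be careful" remark says they agree on $\C[V]$), and by Theorem \ref{theo:act} the semi-discrete convolution $B_{X^g}*'$ is the operator $I(X^g)$, which is $Todd(X^g)^{-1}$. So $\omega_g(K)$, viewed as the polynomial in $D(X^g)$ it equals (by the preceding Proposition), is
\[
\omega_g(K)=I(X^g)\,D(g,X\setminus X^g)\,k_g=Todd(X^g)^{-1}D(g,X\setminus X^g)\,k_g.
\]
Since $D(g,X\setminus X^g)$ is invertible on polynomials (its inverse is the explicit series $\prod_{a\in X\setminus X^g}(D_a^g)^{-1}$, because $g^a\neq 1$ for $a\in X\setminus X^g$ as $g\in\mathcal V(X)$ forces $X^g=X\cap\{a:g^a=1\}$), applying $D(g,X\setminus X^g)^{-1}Todd(X^g)$ to both sides yields $k_g=D(g,X\setminus X^g)^{-1}Todd(X^g)\omega_g(K)$, which is the claim.

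The main obstacle I expect is the vanishing step: verifying that for $h\neq g$ in $\mathcal V(X^g)$ the element $\hat g^{-1}\nabla_{X\setminus X^g}\hat hK_h$ genuinely lies in $E(X^g)=\oplus_{g'\neq 1}\hat{g'}D((X^g)^{g'})$, i.e. that $\hat g^{-1}\hat h$ corresponds to a vertex $g'\neq 1$ of the arrangement $\mathcal A_{X^g}$ with $D(X^h\cap X^g)\subset D((X^g)^{g'})$. This requires matching $X^h\cap X^g$ with $(X^g)^{g'}$ where $g'=\hat g^{-1}\hat h$ corresponds to the torus element $g^{-1}h$, and observing that $a\in X^g$ satisfies $(g^{-1}h)^a=1$ iff $h^a=g^a=1$ iff $a\in X^h\cap X^g$; and that $g^{-1}h\neq 1$ in the relevant quotient torus since $h\neq g$ — care is needed because distinct vertices of $\mathcal A_X$ can become equal only if $X^g$ does not separate them, but $h\in\mathcal V(X^g)$ with $h\neq g$ precisely prevents that. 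Everything else (the two applications of Theorem \ref{theo:act}, the identification $\nabla=D$ on polynomials, invertibility of $D(g,X\setminus X^g)$) is routine given the results already established.
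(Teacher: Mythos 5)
Your proposal is correct and follows essentially the same route as the paper's own proof: decompose $K$, kill the $h\neq g$ terms via Lemma \ref{fornn} and Theorem \ref{convver}(3) applied to $X^g$ (checking, exactly as the paper does, that $g^{-1}h$ is a vertex of $X^g$ distinct from $1$ with $(X^g)^{g^{-1}h}=X^g\cap X^h$), and then identify the surviving $h=g$ term as $I(X^g)D(g,X\setminus X^g)k_g$ and invert. The point you flag as the main obstacle is precisely the one the paper addresses, and your resolution of it is the intended one.
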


 \begin{proof}
 Let    $k_h\in D(X^h)_{\mathbb C}$.
Let us compute $\omega_g(\hat h K_h)$ for each $g\in \mathcal V(X)$.

By Lemma \ref {fornn},
$\nabla_{X\setminus X^g} \hat h K_h$ is zero unless $h$ is a vertex of $X^g$.

Assume now that $h$ is a vertex of $X^g$.  Then $\nabla_{X\setminus X^g} \hat h K_h=\hat h Z$ where  $Z$ is the restriction of a polynomial $z$ lying in $D(X^g\cap X^h)_{\mathbb C}$.

   Clearly $g^{-1}h$ is also a vertex of $X^g$ and $X^g\cap X^{g^{-1}h} =X^g\cap X^h$. We deduce using Lemma \ref{fornn} i) that, if $ g\neq  h$, $$\hat g^{-1}\nabla_{X\setminus X^g} \hat h K_h\in \widehat{g^{-1}h}D(X^g\cap X^{g^{-1}h})_{\mathbb C}\subset E(X^g).$$ So, by Theorem \ref{convver}   $$\omega_g(\hat h K_h)=B_{X^g}*_d\left(\hat g^{-1}\nabla_{X\setminus X^g}\hat h K_h\right )=0.$$

 Finally, if $h=g$,    we obtain that $\hat g^{-1}\nabla_{X\setminus X^g} \hat g K_g$ is the restriction to $\Lambda$ of the polynomial $ D(g,X\setminus X^g)k_g\in
 D(X^g)_{\mathbb C}$.
  By Theorem \ref{theo:act}, the semi-discrete convolution acts by the operator $I(X^g)$ on $D(X^g)$, so that we get
$$\omega_g(\hat g K_g)=D(g,X\setminus X^g)I(X^g) k_g.$$
In conclusion,
   $$\omega_g(\hat h K_h)=\begin{cases}0\ \ \text{if\ } h\neq g\\ D(g,X\setminus X^g)I(X^g) k_g \ \ \text{if\ } h=g.\end{cases}$$
This   implies our claims.
 \end{proof}

 We are now ready to prove our main  Theorem \ref{theo:main}.

   We compute the function $j$ on $\Lambda$ given by
 $$j=\sum_{g\in \mathcal V(X)} \hat g \lim_{\c} \left(D(g, X\setminus X^g)^{-1} Todd(X^g)\right)_{pw}  (\nabla(g,X\setminus X^g)B_{X^g}).$$

  We proceed as in the proof of Theorem \ref{theo:inversionuni}.
  The support of $ \nabla(g,X\setminus X^g)B_{X^g} $ is contained in $Z(X)$. Indeed if $I$ is a subsequence of  $X\setminus X^g$, $\sum_{i\in I}a_i+Z(X^g)\subset Z(X)$.
 So, if    $\lambda\in \Lambda$ and $(\lambda+\c)\cap Z(X)$ is empty, we see that $j(\lambda)=0$.

Now assume that $(\lambda+\c)\cap Z(X)$ is not empty.
Then the points $0$ and $\lambda$ belong to $  \delta(\lambda+\c\,|\,X)$.
 Let
$p_{\lambda,\c}$ be the element of $DM(X)$ which coincides with
$\delta_0$ on $ \delta(\lambda+\c\,|\,X)$.
Let us show  that
 $j(\lambda)=p_{\lambda,\c}(\lambda)$, so that we will obtain i).

We decompose $p_{\lambda,\c}$ according to Proposition \ref{fondi}. It is sufficient to prove
 that the polynomial $\omega_g(p_{\lambda,\c})(v)$ coincides with the piecewise polynomial function   $\nabla(g,X\setminus X^g)B_{X^g}$ on $\lambda+\c$.  That is, if $v\in \lambda+\c$,
\begin{equation}\label{identi}\omega_g(p_{\lambda,\c})(v)=
(\nabla(g,X\setminus X^g)B_{X^g})(v).
\end{equation}
Indeed, by Proposition \ref {fondi}, we have
$$\omega_g(p_{\lambda,\c})=B_{X^g}*_d \hat g^{-1} \nabla(X\setminus X^g) p_{\lambda,\c}=B_{X^g}*_d  \nabla(g,X\setminus X^g) \hat g^{-1} p_{\lambda,\c}$$ so, since semi-discrete convolution commutes with translation, $$\omega_g(p_{\lambda,\c})= \nabla(g,X\setminus X^g)( B_{X^g}*_d \hat g^{-1} p_{\lambda,\c}).$$

For any   subsequence  $I$   of $X\setminus X^g$, set  $a_I=\sum_{i\in I} a_i$. In order to see \eqref{identi}, we need to show  that for $v\in \lambda+\c$,
\begin{equation}\label{identi34}B_{X^g}( v-a_I)=( B_{X^g}*_d \hat g^{-1}p_{\lambda,\c})( v-a_I).\end{equation}
By definition,  the right hand side of this  expression equals
$$\sum_{\nu\in \Lambda}   g^{-\nu}p_{\lambda,\c}(\nu) B_{X^g}(v-a_I-\nu).$$
Now the summand  $B_{X^g}(v-a_I-\nu)$ is zero except if
$v-a_I-\nu$ is in the zonotope $Z(X^g)$. But if this is the case, $v-\nu\in Z(X^g)+a_I\subset Z(X)$, so necessarily  $\nu$ lies in $ \delta(\lambda+\c\,|\,X)$. From this
\eqref{identi34} and hence \eqref{identi} follow by the special choice of   $p_{\lambda,\c}$.
The proof of the first item of Theorem \ref{theo:main} is finished.

Let us prove the second item.  Define
 $$j(K)=\sum_{g\in \mathcal V(X)} \hat g \lim_{\c} \left(D(g, X\setminus X^g)^{-1} Todd(X^g)\right)_{pw}
B_{X^g}*\hat g^{-1}  \nabla_{X\setminus X^g} K$$
$$=\sum_{g\in \mathcal V(X)} \hat g \lim_{\c} \left(D(g,X\setminus X^g)^{-1} Todd(X^g)\right)_{pw}
  B_{X^g}*\nabla(g,X\setminus X^g) \hat g^{-1} K.$$

Then $K\to j(K)$ is an operator of the form $\sum_{g} \hat g R_g \hat g^{-1} K$ where $R_g$ is an operator commuting with translations by elements of $\Lambda$.
Thus the operator $K\to j(K)$ commutes with translation.
Furthermore it is clear that the formula for $j(K)(\lambda)$ involves only a finite number of values of $K(\nu)$ (contained in $\lambda-Z(X)$).
Thus to prove that $j(K)=K$, it is sufficient to prove it for $K$ with finite support.  By translation invariance this case follows from the formula for $\delta_0$.

\section{Partition functions and splines}\label{bv}

\subsection{The formula of Brion-Vergne}

If $Y$ is a sequence of elements of $\Lambda$  generating a pointed cone $\Cone(Y)$, then we can define  the series
$$\Theta_Y=\prod_{a\in Y}\sum_{k=0}^\infty e^{ka}.$$

We write $$\Theta_Y=\sum_{\lambda\in \Lambda}{\mathcal P }_Y
(\lambda)e^{\lambda}$$ where  ${\mathcal P }_Y \in {\mathcal C}_\Z [\Lambda]$ is, by definition, the {\em partition
function} associated to $Y$.
For any subsequence $S$ of $Y$, we then have
\begin{equation}\label{eqnablaY}
\nabla_S{\mathcal P }_Y ={\mathcal P }_{Y\setminus S}.
\end{equation}

In particular $\nabla_Y \mathcal P_Y=\delta_0$, the delta function on $\Lambda$.

Similarly, we can define the  multivariate spline $T_Y$
 which is the tempered distribution  on $V$
defined by:
\begin{equation}\label{multiva}
\langle T_Y\,|\,f\rangle = \int_0^\infty\dots\int_0^\infty
f(\sum_{i=1}^kt_i a_i)dt_1\dots dt_k
\end{equation}
where $Y=[a_1,a_2,\ldots, a_k].$

For any subsequence $S$ of $Y$, we then have
\begin{equation}\label{eqpartialY}
\partial_S T_Y =T_{Y\setminus S}.
\end{equation}

In particular $\partial_Y T_Y=\delta_0$, the $\delta$-distribution on $V$.

    Decomposing a  ray as a sum of intervals, the following formula of Dahmen-Micchelli  follows.

\begin{proposition}\label{dapat}
$$T_Y(v)=(B_Y*_d{\mathcal   P}_Y)(v)=\sum_{\lambda\in\Lambda}B_Y(v-\lambda){\mathcal   P}_Y(\lambda).$$
\end{proposition}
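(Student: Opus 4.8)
The plan is to reduce the identity to the elementary observation that a half-line is a stack of unit intervals, and then to propagate this through the convolution products that define all three objects.

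First I would treat a single vector $a\in\Lambda$. Write $h_a$ for the distribution $f\mapsto\int_0^\infty f(ta)\,dt$ and $b_a=B_{[a]}$ for the interval distribution $f\mapsto\int_0^1 f(ta)\,dt$. Splitting $[0,\infty)$ into the intervals $[k,k+1]$ and using the change of variable $t=k+s$ gives $h_a=\sum_{k\ge 0}t_{ka}b_a$, that is, $h_a=b_a*_c\mu_a$ where $\mu_a:=\sum_{k\ge 0}\delta_{ka}$ is a discrete measure supported on $\mathbb Z_{\ge 0}\,a$. Next, since $Y=[a_1,\dots,a_k]$ generates a pointed cone, all distributions that will occur are supported in $\Cone(Y)$, and convolution of distributions supported in a fixed pointed cone is commutative and associative; moreover, by Fubini applied to the definitions \eqref{eq:box} and \eqref{multiva}, one has $B_Y=b_{a_1}*_c\cdots*_c b_{a_k}$ and $T_Y=h_{a_1}*_c\cdots*_c h_{a_k}$. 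Combining these facts with the single-vector identity,
$$T_Y=(b_{a_1}*_c\mu_{a_1})*_c\cdots*_c(b_{a_k}*_c\mu_{a_k})=\big(b_{a_1}*_c\cdots*_c b_{a_k}\big)*_c\big(\mu_{a_1}*_c\cdots*_c\mu_{a_k}\big)=B_Y*_c\big(\mu_{a_1}*_c\cdots*_c\mu_{a_k}\big).$$
It then remains to identify $\mu_{a_1}*_c\cdots*_c\mu_{a_k}$ with $\sum_\lambda\mathcal P_Y(\lambda)\delta_\lambda$: convolving the $\mu_{a_i}$ produces $\sum\delta_{n_1a_1+\cdots+n_ka_k}$ over all $(n_1,\dots,n_k)\in\mathbb Z_{\ge 0}^k$, and the number of such tuples with $n_1a_1+\cdots+n_ka_k=\lambda$ is exactly $\mathcal P_Y(\lambda)$, by the defining expansion $\Theta_Y=\prod_{a\in Y}\sum_{k\ge 0}e^{ka}=\sum_\lambda\mathcal P_Y(\lambda)e^\lambda$. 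Hence $T_Y=B_Y*_c\big(\sum_\lambda\mathcal P_Y(\lambda)\delta_\lambda\big)=B_Y*_d\mathcal P_Y$, which is the assertion.

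The only delicate point is the bookkeeping of convolutions of distributions that are not compactly supported: one must check that each partial product above is again supported in the pointed cone $\Cone(Y)$, so that the rearrangements and the formation of the infinite sums $\mu_{a_i}$ are legitimate. Both are immediate from pointedness of $\Cone(Y)$: for fixed $\lambda$ only finitely many points of the form $n_1a_1+\cdots+n_ja_j$ lie in $\lambda-\Cone(Y)$, so every sum involved is locally finite. Alternatively, one can avoid the support bookkeeping altogether by passing to Laplace transforms on the tube over the interior of the dual cone, where \eqref{eq:FB} gives $\prod_{a\in Y}\frac{e^{\langle a,\cdot\rangle}-1}{\langle a,\cdot\rangle}$ for $B_Y$, the geometric series gives $\prod_{a\in Y}\frac{1}{1-e^{\langle a,\cdot\rangle}}$ for $\sum_\lambda\mathcal P_Y(\lambda)e^{\lambda}$, and the product of these equals $\prod_{a\in Y}\frac{-1}{\langle a,\cdot\rangle}$, the Laplace transform of $T_Y$; equality of these absolutely convergent transforms then forces the claimed equality of distributions. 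I expect the cone-support argument to be the cleaner route in the spirit of the present section, with the transform computation serving as a cross-check.
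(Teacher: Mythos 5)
Your proof is correct and is exactly the argument the paper intends: the paper's entire justification is the sentence ``Decomposing a ray as a sum of intervals, the following formula of Dahmen-Micchelli follows,'' and your identity $h_a=b_a*_c\mu_a$ together with the multiplicativity of convolution over the pointed cone is precisely that decomposition, spelled out with the support bookkeeping made explicit.
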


  Let us see that  the formulae obtained in  \cite{brionvergne} are a corollary of the general inversion formula  of Theorem \ref{theo:main}.
 We assume that $X$ generates the vector space $V$ and spans a pointed cone (thus with non empty interior).
 It follows that  $T_X$ is a piecewise polynomial distribution on $V$.

 Let us apply the inversion formula of Theorem \ref{theo:main}  to the partition function $\mathcal P_X$.
 From the equations
 $$    \hat g^{-1}\nabla_{X\setminus X^g}\mathcal P_X=\hat g^{-1}\mathcal P_{X^g}=\mathcal P_{X^g},\hspace{1cm}  B_{X^g}*_d\mathcal P_{X^g}   =T_{X^g}, $$
 we deduce from Theorem \ref{theo:main}
\begin{theorem}

Let $\c$ be   an alcove contained in  $\Cone(X)$ and having 0 in its closure, then
  $\mathcal P_X$ coincides with:
\begin{equation}\label{bbvv}
\sum_{g\in \CV(X)} \hat g \lim_\c
\left(\prod_{b\in X^g}\frac{  \partial_b} {
1-e^{-\partial_b} } \prod_{a\in X\setminus X^g}\frac{1}{1-g^{-a} e^{-\partial_a}} T_{ X^g}\right).
\end{equation}

\end{theorem}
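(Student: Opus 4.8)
The strategy is to recognize equation~\eqref{bbvv} as nothing more than the specialization of the general inversion formula, Theorem~\ref{theo:main}, to the case $K = \mathcal P_X$, after identifying the relevant "building blocks" $\omega_g(\mathcal P_X)$ with the piecewise polynomial distributions $T_{X^g}$. So the first step is to unwind the definition $\omega_g(K) = B_{X^g} *_d \bigl(\hat g^{-1}\nabla_{X\setminus X^g} K\bigr)$ when $K = \mathcal P_X$. Here one uses the difference-equation identity~\eqref{eqnablaY}, namely $\nabla_{X\setminus X^g}\mathcal P_X = \mathcal P_{X\setminus(X\setminus X^g)} = \mathcal P_{X^g}$. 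The one point needing a word is that $\hat g^{-1}$ acts trivially on $\mathcal P_{X^g}$: by definition of $X^g$ we have $g^a = 1$ for every $a\in X^g$, and $\mathcal P_{X^g}$ is supported on $\Cone(X^g) \cap \Lambda$, on which $g^\lambda \equiv 1$; hence $\hat g^{-1}\mathcal P_{X^g} = \mathcal P_{X^g}$. Combining, $\omega_g(\mathcal P_X) = B_{X^g}*_d \mathcal P_{X^g}$, which by Proposition~\ref{dapat} (the Dahmen--Micchelli identity, applied to the list $X^g$, which spans its own span and generates a pointed cone since $X$ does) equals $T_{X^g}$.

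**Assembling the formula.** With this identification in hand, I plug into Theorem~\ref{theo:main}~(ii): for any alcove $\c$ containing $0$ in its closure and contained in $Z(X)$ — and under the present pointedness hypothesis one may equivalently take $\c\subset\Cone(X)$, since near $0$ the zonotope $Z(X)$ and the cone $\Cone(X)$ coincide up to the admissible-hyperplane structure, so the same alcove works — we get
\[
\mathcal P_X = \sum_{g\in\CV(X)} \hat g\,\lim_{\c}\Bigl(D(g,X\setminus X^g)^{-1}\,Todd(X^g)\Bigr)_{pw}\,\omega_g(\mathcal P_X)
= \sum_{g\in\CV(X)} \hat g\,\lim_{\c}\Bigl(D(g,X\setminus X^g)^{-1}\,Todd(X^g)\Bigr)_{pw}\,T_{X^g}.
\]
It only remains to expand the operators in product form. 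By definition $Todd(X^g) = \prod_{b\in X^g}\frac{\partial_b}{1-e^{-\partial_b}}$, and $D(g,X\setminus X^g) = \prod_{a\in X\setminus X^g}(1 - g^{-a}e^{-\partial_a})$, so $D(g,X\setminus X^g)^{-1} = \prod_{a\in X\setminus X^g}\frac{1}{1-g^{-a}e^{-\partial_a}}$ (this inverse exists as a series of differential operators precisely because $g^a\neq 1$ for $a\in X\setminus X^g$, which is the content of $g\in\CV(X)$; this was spelled out in the discussion preceding Theorem~\ref{theo:main}). Substituting gives exactly~\eqref{bbvv}, where I have folded the subscript $pw$ and the $\lim_\c$ together into the single symbol $\lim_\c$ as in the statement.

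**Where the work really sits.** There is essentially no hard estimate here — the theorem is a corollary, and all the analytic substance is already carried by Theorem~\ref{theo:main}. The only genuine checks are the two bookkeeping facts above: (a) $\hat g^{-1}\mathcal P_{X^g} = \mathcal P_{X^g}$, which I would justify by the support argument, and (b) that Proposition~\ref{dapat} applies to the sublist $X^g$, for which I need $X^g$ to span a pointed cone in its linear span — immediate since $X^g\subseteq X$ and $X$ spans a pointed cone, and $X^g$ spans $V$ because $g$ is a toric vertex. A secondary point of care, which I expect to be the subtlest, is the passage from "alcove in $Z(X)$ containing $0$ in its closure" to "alcove in $\Cone(X)$ containing $0$ in its closure": one must observe that the admissible hyperplanes through $0$ cut out the same local cone decomposition near $0$ whether one works inside $Z(X)$ or inside $\Cone(X)$, so the operator $\lim_\c$ in Theorem~\ref{theo:main} and the one appearing in~\eqref{bbvv} agree. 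Once that is noted, the statement follows by direct substitution, and one remarks as a sanity check that applying $\nabla_X$ to both sides recovers the known identity $\nabla_X\mathcal P_X = \delta_0$.
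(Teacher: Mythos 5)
Your proof is correct and follows exactly the paper's route: specialize Theorem \ref{theo:main} to $K=\mathcal P_X$, use $\nabla_{X\setminus X^g}\mathcal P_X=\mathcal P_{X^g}$, $\hat g^{-1}\mathcal P_{X^g}=\mathcal P_{X^g}$, and $B_{X^g}*_d\mathcal P_{X^g}=T_{X^g}$, then expand the operators. Your extra care about why $\hat g^{-1}$ acts trivially (note the support is really the semigroup $\Z_{\geq 0}X^g$, not all of $\Cone(X^g)\cap\Lambda$) and about matching the alcove conditions ($\c\subset Z(X)$ versus $\c\subset\Cone(X)$) fills in details the paper leaves implicit.
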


\bigskip

More generally, given a sequence of non zero vectors $Y$ in $\Lambda$, where we do not  necessarily assume  that $Y$ spans a pointed cone, we can define
{\em polarized partition functions} as follows.
Consider the open subset $\{u\in V^*\,|\,\ll u,a\rr \neq 0 {\, \rm for \, all\,} a\in Y\}$ of $V^*$.  A connected component $F$ of this open set will be called a  {\em  regular face} for $Y$. An element $\phi\in F$ decomposes $Y=A\cup B$ where $\phi$ is positive on $A$ and negative on $B$. This decomposition depends only upon $F$. We define

$$\Theta_Y^F=(-1)^{|B|}\prod_{a\in A}\sum_{k=0}^\infty e^{ka}
\prod_{b\in B}\sum_{k=1}^\infty e^{-kb}
.$$

Thus   $\Theta_Y^F=(-1)^{|B|} e^{-\sum_{b\in B}b} \Theta_{A\cup -B}.$

Morally, $\Theta_{Y}^{F}=\prod_{a\in Y}\frac{1}{1-e^a}=\prod_{a\in A}\frac{1}{1-e^a}\prod_{b\in
B}\frac{-e^{-b}}{1-e^{-b}}.$ But we  need to reverse the sign of
 the vectors in $B$ in order to insure that they lie in a given pointed cone (depending on $F$) so that the
convolution product of the corresponding geometric series makes
sense.

We write $$\Theta_Y^F=\sum_{\lambda\in \Lambda}{\mathcal P }_Y^F
(\lambda)e^{\lambda}$$ where  ${\mathcal P }_Y^F \in {\mathcal C} [\Lambda]$ is the {\em polarized (by $F$) partition
function}. The polarized partition function is a $\Z$-valued function on $\Lambda$.

\

We define similarly
$$T_Y^F=(-1)^{|B|}T_{A,-B}$$
a distribution on $V$ and
call it the polarized multispline function.

As for Proposition \ref{dapat} it is easy to verify the following,
 \begin{proposition}\label{lem:BZ}
  For  any regular face $F$ for $Y$, one has
 $B_Y*_d\mathcal P_Y^F=T_Y^F$
 \end{proposition}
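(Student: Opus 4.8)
The plan is to reduce Proposition \ref{lem:BZ} to Proposition \ref{dapat} by unwinding the definition of the polarized objects. Fix a regular face $F$ for $Y$ and write $Y=A\cup B$ for the induced decomposition, so that $\phi\in F$ is positive on $A$ and negative on $B$. By definition, $T_Y^F=(-1)^{|B|}T_{A\cup(-B)}$ and, from the displayed identity $\Theta_Y^F=(-1)^{|B|}e^{-\sum_{b\in B}b}\,\Theta_{A\cup(-B)}$, we get $\mathcal P_Y^F=(-1)^{|B|}\,t_{c}\,\mathcal P_{A\cup(-B)}$ where $c:=\sum_{b\in B}b$ and $t_c$ is the translation operator (multiplication by $e^c$ on the level of $\Theta$). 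Note that $A\cup(-B)$ spans a pointed cone, since $\phi$ is strictly positive on all of its vectors, so Proposition \ref{dapat} applies to it.

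First I would record the elementary fact that convolution with $B_Y$ is insensitive to the signs of the vectors in $Y$ only up to a controlled translation: from the defining integral \eqref{eq:box}, changing $a_i\mapsto -a_i$ for $i\in B$ amounts to the change of variables $t_i\mapsto 1-t_i$ there, which gives $B_Y=t_{c}\,B_{A\cup(-B)}$ (as distributions on $V$), again with $c=\sum_{b\in B}b$. Equivalently this is immediate from the Fourier transform formula \eqref{eq:FB}, since $\frac{e^{-i\langle b,x\rangle}-1}{-i\langle b,x\rangle}=e^{-i\langle b,x\rangle}\frac{e^{i\langle b,x\rangle}-1}{i\langle b,x\rangle}$ and the product of the extra factors $e^{-i\langle b,x\rangle}$ over $b\in B$ is exactly the Fourier transform of the translation by $c$. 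So $B_Y=t_c B_{A\cup(-B)}$.

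Now I would simply compute, using that $B_{A\cup(-B)}*_d(\,\cdot\,)$ commutes with translations by elements of $\Lambda$:
\begin{align*}
B_Y*_d\mathcal P_Y^F
&=(t_c B_{A\cup(-B)})*_d\bigl((-1)^{|B|}t_c\mathcal P_{A\cup(-B)}\bigr)\\
&=(-1)^{|B|}\,t_c\Bigl(B_{A\cup(-B)}*_d\mathcal P_{A\cup(-B)}\Bigr)\\
&=(-1)^{|B|}\,t_c\,T_{A\cup(-B)},
\end{align*}
where the last equality is Proposition \ref{dapat} applied to the pointed sequence $A\cup(-B)$. It remains to identify $t_c T_{A\cup(-B)}$ with $T_{A\cup(-B)}=T_Y^F\cdot(-1)^{-|B|}$; indeed the same substitution $t_i\mapsto 1-t_i$ on the $B$-variables in the defining integral \eqref{multiva} for the multispline (or, cleanly, on its Laplace/Fourier transform, where reversing $b$ contributes a factor $e^{-\langle b,\cdot\rangle}$) shows $T_{A,-B}=t_c\,T_{A\cup(-B)}$ after noting that here $T_{A,-B}$ is precisely the notation for $T_{A\cup(-B)}$ with the $B$-vectors reversed, i.e. $T_Y^F=(-1)^{|B|}t_c T_{A\cup(-B)}$. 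Substituting back gives $B_Y*_d\mathcal P_Y^F=T_Y^F$, as claimed.

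The only genuinely delicate point is keeping the translation bookkeeping consistent between the three objects $\Theta_Y^F$, $B_Y$, and $T_Y^F$: each of the "sign reversal on $B$" operations introduces a shift by $c=\sum_{b\in B}b$, and one must check these shifts enter with matching signs so that they cancel correctly (the discrete measure $\mathcal P_Y^F$ carries one shift, $B_Y$ carries the opposite, and $T_Y^F$ carries the net one). I expect the cleanest way to avoid errors is to do the whole computation on the Fourier/Laplace transform side, where each reversal is literally multiplication by the corresponding exponential and the identity $B_Y*_d\mathcal P_Y^F=T_Y^F$ becomes the purely formal factorization $\prod_{a\in Y}\frac{e^{i\langle a,x\rangle}-1}{i\langle a,x\rangle}\cdot\prod_{a\in Y}\frac{1}{1-e^{i\langle a,x\rangle}}=\prod_{a\in Y}\frac{-1}{i\langle a,x\rangle}$ read off appropriately in the polarized sense — this is exactly the "morally $\Theta_Y^F=\prod_a\frac1{1-e^a}$" remark made just before the statement, and it is why the proof is "easy to verify."
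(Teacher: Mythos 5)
Your strategy --- reduce to Proposition \ref{dapat} by expressing each polarized object as a signed translate of the corresponding unpolarized object for the pointed sequence $A\cup(-B)$ --- is sound, and is essentially the intended argument (the paper only remarks that the verification is analogous to that of Proposition \ref{dapat}). But the translation bookkeeping you wrote down is wrong in three places, and the errors conspire to give the right answer. First, since $(t_af)(\lambda)=f(\lambda-a)$ corresponds to multiplication by $e^{ia}$ on $\Theta(f)$, the identity $\Theta_Y^F=(-1)^{|B|}e^{-c}\Theta_{A\cup(-B)}$ gives $\mathcal P_Y^F=(-1)^{|B|}t_{-c}\mathcal P_{A\cup(-B)}$, not $t_{c}$. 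Second, translations add under convolution, so $(t_cB)*_d(t_cP)$ would equal $t_{2c}(B*_dP)$, not $t_c(B*_dP)$; your displayed computation is internally inconsistent. Third, $T_Y^F=(-1)^{|B|}T_{A,-B}$ carries no translation at all: $T_{A,-B}$ is simply the notation for the multispline of the sequence $A\cup(-B)$, and your claimed identity $T_{A,-B}=t_cT_{A\cup(-B)}$ is false --- the substitution $t_i\mapsto 1-t_i$ is not available in \eqref{multiva}, whose domain of integration is $[0,\infty)^k$ rather than the unit cube.

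With the corrected relations the shifts cancel exactly and your reduction goes through: $B_Y*_d\mathcal P_Y^F=(t_cB_{A\cup(-B)})*_d\bigl((-1)^{|B|}t_{-c}\mathcal P_{A\cup(-B)}\bigr)=(-1)^{|B|}\,B_{A\cup(-B)}*_d\mathcal P_{A\cup(-B)}=(-1)^{|B|}T_{A\cup(-B)}=T_Y^F$. Your closing parenthetical actually describes this correct cancellation ($\mathcal P_Y^F$ and $B_Y$ carry opposite shifts and $T_Y^F$ carries none), but it contradicts the computation you wrote above it. The Fourier-side verification you sketch at the end is indeed the cleanest way to keep the signs straight, and I would recommend writing the proof in that form.
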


 Proposition \ref{lem:BZ} implies that $B_X=\nabla_X T_X^F$. Thus $B_X$ is a linear combination of translates of multisplines $T_X^F$.

 \subsection{The spaces ${\mathcal F}(X)$ and $\mathcal G(X)$}

In this subsection, we   recall the definitions of
the subspaces ${\mathcal F}(X)$, a subspace of functions on $\Lambda$, and $\mathcal G(X)$  a subspace of distributions on $V$ introduced in \cite{dpv1}.
 They will be central objects in Part II  as these spaces are related to  the
equivariant  $K$-theory and cohomology of some $G$-spaces.

We use the convolution sign $*$ for convolutions between functions on $\Lambda$, or distributions on $V$, or semi discrete convolution between a function on $\Lambda$ and distributions on $V$. The meaning will be clear in the context.

\begin{definition}
A subspace  $\underline s$ of $V$ is called {\it rational} (relative to $X$) if  $\underline s$ is the vector space generated by  $X\cap \underline s$.

We shall denote by $\mathcal S_X$ the set of rational subspaces.

\end{definition}

Denote by $\mathcal C_\Z[\Lambda]$ the $\Z$-module  of $\Z$-valued functions on $\Lambda$.
Define  the following subspace of $\mathcal C_\Z[\Lambda]$.

\begin{definition}\label{graf}

$$\mathcal F(X):=\{f:\Lambda\to \mathbb Z\,|\,  \nabla_{X\setminus  \underline r }
f \text{ is supported on }  \Lambda\cap \underline r \text{ for all  } \underline r\in \mathcal S_X  \}.
$$
We set $\tilde{\mathcal F}(X)$ to be the $\Z[\Lambda]$ module generated by $\mathcal F(X)$.
\end{definition}
The space  $\mathcal F(X)$  contains clearly the space
of Dahmen-Micchelli quasi polynomials $DM(X)$ and all polarized partition functions $\mathcal P_X^F$.

%\marginpar{I added a remark on continuity}

\begin{remark}
Assume that $X$ is unimodular. Let $\tau$ be a tope. We have proven in \cite{dpv1} that a function $f$ in $\mathcal F(X)$ coincide on $(\tau-Z(X))\cap \Lambda$ with the restriction  of a polynomial function $h^{\tau}$.
Thus we see that this collection of functions $h^{\tau}|_{\tau}$ extends to a continuous function on  the cone $Cone(X)$ generated by $X$.
\end{remark}

 We have a precise description of $\mathcal F(X)$ and  $\tilde{\mathcal F}(X)$  in Theorem 4.5 of  \cite{dpv1}.

\begin{theorem}\label{geneF}

\begin{itemize}
\item
Choose,  for every rational space $\underline r$,
  a  regular face $F_{\underline r}$ for  $X\setminus \underline r$. Then:
\begin{equation}\label{decomp}
\mathcal F(X)=\oplus_{\underline r\in S_X} {\mathcal
P}_{X\setminus\underline r}^{F_{\underline r}}*DM(X\cap \underline r ).
\end{equation}
 \item $\tilde{\mathcal F}(X)$
   is spanned over $\Z[\Lambda]$ by the elements $\mathcal P_X^F$ as $F$ runs over all regular faces for $X$.
\end{itemize}
\end{theorem}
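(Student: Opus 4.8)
\textbf{Proof plan for Theorem \ref{geneF}.}

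The plan is to establish the direct sum decomposition \eqref{decomp} and then deduce the description of $\tilde{\mathcal F}(X)$ as a corollary. The overall strategy is an induction on the number of rational subspaces, or equivalently on $\dim V$ together with the size of $X$, peeling off the top stratum. First I would record the two basic facts that make the sum on the right-hand side land inside $\mathcal F(X)$: the space $DM(X\cap\underline r)$ consists of quasi-polynomials annihilated by $\nabla_Y$ for every long subsequence $Y$ of $X\cap\underline r$, and the polarized partition function $\mathcal P_{X\setminus\underline r}^{F_{\underline r}}$ satisfies $\nabla_S\mathcal P_{X\setminus\underline r}^{F_{\underline r}}=\mathcal P_{(X\setminus\underline r)\setminus S}^{F_{\underline r}}$ by the analogue of \eqref{eqnablaY}. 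Combining these via the product rule for $\nabla$ on a convolution, one checks that each summand $\mathcal P_{X\setminus\underline r}^{F_{\underline r}}*DM(X\cap\underline r)$ lies in $\mathcal F(X)$: for a rational subspace $\underline s$, the operator $\nabla_{X\setminus\underline s}$ splits according to $X\setminus\underline s=((X\setminus\underline r)\setminus\underline s)\cup((X\cap\underline r)\setminus\underline s)$, and one argues that if $\underline s\not\supseteq\underline r$ then the $\nabla$ coming from the partition-function factor already forces support on a smaller rational subspace, while if $\underline s\supseteq\underline r$ the $DM$-factor handles the rest. This is the routine ``inclusion'' half.

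The heart of the argument is the reverse inclusion and the directness of the sum; this is the step I expect to be the main obstacle. Given $f\in\mathcal F(X)$, consider the maximal rational subspace $\underline r=V$. The defining condition says $\nabla_{X\setminus V}f=\nabla_\emptyset f=f$ is supported on all of $\Lambda$, which is vacuous, so instead one looks at a \emph{minimal} nonzero rational subspace, or better, proceeds top-down: I would show that $f$ can be corrected by an element of the $\underline r=V$ summand $\mathcal P_{X\setminus V}^{F_V}*DM(X)=DM(X)$ (since $X\setminus V=\emptyset$) so that the difference satisfies the stronger support conditions needed to apply the inductive hypothesis on the arrangement restricted to proper rational subspaces. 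Concretely, for each rational hyperplane-type subspace $\underline r$ one uses the explicit structure: $\nabla_{X\setminus\underline r}f$ is a function on $\Lambda$ supported on $\Lambda\cap\underline r$ and, by the case of the theorem already known in the lower-dimensional situation (the system $X\cap\underline r$ inside $\underline r$), it decomposes. The bookkeeping that turns these compatible lower-dimensional decompositions into a single global decomposition of $f$, with the partition-function polarizations $F_{\underline r}$ chosen once and for all, is precisely what was carried out in \cite{dpv1}, and I would invoke that argument, checking that the polarizations can indeed be chosen independently for different $\underline r$ because changing $F_{\underline r}$ changes $\mathcal P_{X\setminus\underline r}^{F_{\underline r}}$ only by an element of $DM(X\setminus\underline r)*(\text{something})$ that is absorbed into neighbouring summands. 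Directness follows by a dimension/support count: an element in the intersection of the $\underline r$-summand with the sum of the others would have to be simultaneously supported near $\underline r$ and expressible through strictly different strata, forcing it to be $0$ by the uniqueness in Theorem \ref{theo:deltalambda} applied stratum by stratum.

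Finally, for the second bullet: since $\tilde{\mathcal F}(X)$ is by definition the $\Z[\Lambda]$-module generated by $\mathcal F(X)$, and every generator of $\mathcal F(X)$ in \eqref{decomp} is of the form $\mathcal P_{X\setminus\underline r}^{F_{\underline r}}*(\text{translate of a }DM\text{ basis element})$, it suffices to show each $\mathcal P_{X\setminus\underline r}^{F_{\underline r}}*DM(X\cap\underline r)$ lies in the $\Z[\Lambda]$-span of $\{\mathcal P_X^F\}$. For this I would use the factorization $\mathcal P_X^F=\mathcal P_{X\setminus\underline r}^{F'}*\mathcal P_{X\cap\underline r}^{F''}$ for suitably compatible regular faces, together with the fact (from the unimodular-type arguments, e.g. Theorem \ref{convver}) that $DM(X\cap\underline r)$ is spanned over $\Z[\Lambda]$ by translates of partition functions $\mathcal P_{X\cap\underline r}^{F''}$ on the subspace $\underline r$; convolving on both sides and reindexing gives the claim, and the reverse containment is immediate since each $\mathcal P_X^F\in\mathcal F(X)$. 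The only delicate point here is matching polarizations so that the convolution of the two partition functions is again a genuine polarized partition function for $X$, which one arranges by choosing $F$ in the product of the chosen faces — again following \cite{dpv1}.
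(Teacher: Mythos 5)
The first thing to say is that the paper itself offers no proof of this statement: it is introduced with ``We have a precise description of $\mathcal F(X)$ and $\tilde{\mathcal F}(X)$ in Theorem 4.5 of \cite{dpv1}'' and is simply recalled from that reference. Your proposal, read closely, is in the same position — the two hard steps (the reverse inclusion in \eqref{decomp} and the global bookkeeping) are explicitly deferred to \cite{dpv1} — so as a citation it matches the paper. But since you present it as a proof plan, I will treat it as one, and then it has genuine gaps beyond the deferral.

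The concrete problems are these. First, your directness argument is not an argument: Theorem \ref{theo:deltalambda} is a statement about $DM(X)$ for a spanning system $X$ and the sets $\delta(\c\,|\,X)$, and it does not apply ``stratum by stratum'' to distinguish the summands $\mathcal P_{X\setminus\underline r}^{F_{\underline r}}*DM(X\cap\underline r)$ for different $\underline r$; the actual mechanism in \cite{dpv1} is a filtration of $\mathcal F(X)$ by the dimension of the rational subspaces on which the various $\nabla_{X\setminus\underline r}f$ are supported, with the associated graded pieces identified via the difference equations — a support argument, not an appeal to the interpolation property of $DM$. Second, in the proof of the second bullet you invoke Theorem \ref{convver} for the claim that $DM(X\cap\underline r)$ is spanned over $\Z[\Lambda]$ by translates of partition functions; Theorem \ref{convver} says nothing of the sort (it decomposes $DM(X)_\C$ into the pieces $\hat g D(X^g)_\C$). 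In fact the paper points out immediately after the statement that the expressibility of $DM(X)$ elements through the $\mathcal P_X^F$ is a \emph{consequence} of this theorem, so using it for the subsystem $X\cap\underline r$ must be built into the induction on dimension rather than quoted as known — otherwise the argument is circular. Third, the matching of polarizations you flag as ``delicate'' is a real issue: not every pair $(F',F'')$ of regular faces for $X\setminus\underline r$ and $X\cap\underline r$ arises as the restriction of a single regular face $F$ for $X$, so the factorization $\mathcal P_X^F=\mathcal P_{X\setminus\underline r}^{F'}*\mathcal P_{X\cap\underline r}^{F''}$ only produces certain combinations, and one must check these suffice. Your inclusion half (the RHS of \eqref{decomp} lands in $\mathcal F(X)$) is essentially right and can be made precise along the lines you indicate.
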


Thus any Dahmen-Micchelli  quasi-polynomial $p$  in $DM(X)$  can be written as a linear combination of polarized partition  functions $\mathcal P_X^F$, for various $F$. It is not so easy to do it explicitly.

The easiest instance  of this result is when $\Lambda=\mathbb Z$. In this case one has only two rational spaces $\mathbb R$ and $\{0\}$. Thus our results says that, if we take as $F_{\{0\}}$ the positive half-line $\mathbb R^{\geq 0}$,  every element in $\mathcal F(X)$ can be written uniquely as the sum of a quasi polynomial in $DM(X)$ and a multiple of the partition function ${\mathcal
P}_X ^{\mathbb R^{\geq 0}}$. 

In general the decomposition \eqref{decomp} is not so easy to compute  explicitly.

\bigskip

We defined  in \cite{dpv1} an analogue space of piecewise polynomial distributions on $V$.

Denote by $\mathcal D'(V)$ the space of  distributions on $V$.
Let $\underline r$ be a vector subspace in $V$. We have an embedding $j:\mathcal D'(\underline r)\to \mathcal D'(V)$ by $\ll j(\theta),f\rr =\ll \theta,f|\underline r\rr $ for any $\theta\in \mathcal D'(\underline r)$, $f$ a test function on $V$. We denote the image $j(\mathcal D'(\underline r))$ by $\mathcal D'(V, \underline r) $ (sometimes we even identify $ \mathcal D'(\underline r) $ with $\mathcal D'(V, \underline r) $  if there is no ambiguity).
 We next define
 the vector space:

 \begin{definition}\label{gra}
$$ \mathcal G(X):=
\{f\in {\mathcal D'}(V)\,|\,
\partial_{X\setminus\underline r}f\in \mathcal D'(V, \underline r ),
\text{ for all } \underline r\in \mathcal S_X\}.
$$
We set $\tilde{ \mathcal G}(X) $ to be the module generated by $ \mathcal G(X)$ under the action of the algebra $S[V]$ of differential operators with constant coefficients. \end{definition}

 It is clear that $\mathcal G(X)$ contains the space $D(X)$ of Dahmen-Micchelli polynomials (we identify freely a locally
 $L^1$-function  $p$ and the distribution $p(v)dv$ using our choice of Lebesgue measure) as well as the polarized multisplines $T_X^F$.

\begin{theorem}\label{gestad1}

\begin{itemize}

\item
 Choose,  for every rational space $\underline r$,
  a  regular face $F_{\underline r}$ for  $X\setminus \underline r$. Then:
\begin{equation}\label{decomp1}
\mathcal G(X)=\oplus_{\underline r\in S_X}
T_{X\setminus r}^{F_{\underline r}}*D(X\cap \underline r ).
\end{equation}
\\

\item
The space $\tilde{ \mathcal G}(X) $ is  generated as $S[V]$ module by  the distributions $T_X^F$ as $F$ runs over all regular faces for $X$.

\end{itemize}
\end{theorem}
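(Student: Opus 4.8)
The plan is to deduce the decomposition \eqref{decomp1} from its discrete counterpart \eqref{decomp} in Theorem \ref{geneF}, using semi-discrete convolution with the Box spline $B_X$ as the bridge, and then to treat directness of the sum and the second assertion separately. I begin with the inclusion ``$\supseteq$'', that each $T_{X\setminus\underline r}^{F_{\underline r}}*D(X\cap\underline r)$ lies in $\mathcal G(X)$. Fix $\underline r\in\mathcal S_X$, a polynomial $q\in D(X\cap\underline r)$ regarded as a density supported on $\underline r$, and $\underline s\in\mathcal S_X$. Since $\partial$ commutes with $*$, split $X\setminus\underline s$ into the part lying outside $\underline r$ and the part lying inside $\underline r$: the first block acts on $T_{X\setminus\underline r}^{F_{\underline r}}$ and, by the polarized analogue of \eqref{eqpartialY}, turns it (up to sign) into a polarized multispline for $(X\setminus\underline r)\cap\underline s$, supported in a cone inside $\underline s$; the second block acts on $q$, and either $(X\cap\underline r)\setminus\underline s$ is a long subsequence of $X\cap\underline r$, so $\partial_{(X\cap\underline r)\setminus\underline s}q=0$, or $(X\cap\underline r)\cap\underline s$ spans $\underline r$, in which case $\underline r\subseteq\underline s$ and $\partial_{(X\cap\underline r)\setminus\underline s}q$ is again a density on $\underline r\subseteq\underline s$. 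Because $\underline r+\mathrm{span}\big((X\setminus\underline r)\cap\underline s\big)=\underline s$ (as $\underline s=\mathrm{span}(X\cap\underline s)$), the convolution of the two pieces is absolutely continuous along $\underline s$, hence lies in $\mathcal D'(V,\underline s)$; so the right-hand side of \eqref{decomp1} is contained in $\mathcal G(X)$.

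The key step is to compute $B_X*_d(\mathcal F(X))$. Writing $B_X=B_{X\cap\underline r}*_c B_{X\setminus\underline r}$ and using associativity of the convolutions together with Proposition \ref{lem:BZ}, for $k\in DM(X\cap\underline r)$ one gets
$$B_X*_d\big(\mathcal P_{X\setminus\underline r}^{F_{\underline r}}*_d k\big)=T_{X\setminus\underline r}^{F_{\underline r}}*\big(B_{X\cap\underline r}*_d k\big),$$
with $B_{X\cap\underline r}*_d k\in D(X\cap\underline r)$ by Lemma \ref{daDMaD}; moreover $k\mapsto B_{X\cap\underline r}*_d k$ maps $DM(X\cap\underline r)$ onto $D(X\cap\underline r)$, being the isomorphism $I(X\cap\underline r)$ on the polynomial summand of $DM(X\cap\underline r)$ (Theorem \ref{theo:act}) and zero on the complementary one (Theorem \ref{convver}). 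Substituting \eqref{decomp} therefore gives $B_X*_d(\mathcal F(X))=\sum_{\underline r\in\mathcal S_X}T_{X\setminus\underline r}^{F_{\underline r}}*D(X\cap\underline r)\subseteq\mathcal G(X)$. Hence \eqref{decomp1} will follow once we show (a) this sum is direct, and (b) $B_X*_d\colon\mathcal F(X)\to\mathcal G(X)$ is surjective.

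I would prove (a) exactly as for \eqref{decomp} in \cite{dpv1}, by an induction on $\dim\underline r$ that separates the summands through the location and asymptotics of their supports. For the second assertion, $B_X*_d\mathcal P_X^F=T_X^F$ (Proposition \ref{lem:BZ}) together with the second bullet of Theorem \ref{geneF} and surjectivity of $B_X*_d$ shows that the $T_X^F$ generate $\mathcal G(X)$ over $S[V]$ modulo the summands with $\underline r\ne V$; a downward induction on $\dim\underline r$, using $\partial_a T_X^F=T_{X\setminus a}^F$ and the fact that $T_X^F-T_X^{F'}\in D(X)$, then places each such summand inside $S[V]\cdot\{T_X^F:F\}$, so that $\tilde{\mathcal G}(X)=S[V]\cdot\{T_X^F:F\}$.

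The main obstacle is (b): exhibiting every $f\in\mathcal G(X)$ as $B_X*_d K$ for some $K\in\mathcal F(X)$, equivalently $\mathcal G(X)\subseteq\sum_{\underline r}T_{X\setminus\underline r}^{F_{\underline r}}*D(X\cap\underline r)$. This is the real geometric content, and I expect it to go as in \cite{dpv1} for the function case: either by the deconvolution machinery of Part 1 --- applying piecewise $Todd$-type operators alcove by alcove to recover a candidate lattice function $K$ and checking it satisfies the defining difference equations of $\mathcal F(X)$ --- or, more structurally, by induction on $|X|$ via deletion--contraction exact sequences for $\mathcal G(X)$. In either route the delicate point is the combinatorial bookkeeping of how the support conditions $\partial_{X\setminus\underline s}f\in\mathcal D'(V,\underline s)$, over all $\underline s\in\mathcal S_X$, interlock to force the splitting of $f$ along the rational subspaces.
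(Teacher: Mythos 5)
Your reduction of \eqref{decomp1} to \eqref{decomp} via $K\mapsto B_X*_dK$ is the natural bridge, and the parts you actually carry out are sound: the verification that each $T_{X\setminus\underline r}^{F_{\underline r}}*D(X\cap\underline r)$ satisfies the defining conditions of $\mathcal G(X)$, and the identity $B_X*_d\bigl(\mathcal P_{X\setminus\underline r}^{F_{\underline r}}*k\bigr)=T_{X\setminus\underline r}^{F_{\underline r}}*\bigl(B_{X\cap\underline r}*_dk\bigr)$, which is exactly the computation the paper itself performs in the proof of Theorem \ref{lisom}. But the paper does not prove Theorem \ref{gestad1} at all --- it is recalled from \cite{dpv1} --- and your proposal does not prove it either: both the directness of the sum (your point (a)) and, crucially, the reverse inclusion $\mathcal G(X)\subseteq\sum_{\underline r}T_{X\setminus\underline r}^{F_{\underline r}}*D(X\cap\underline r)$ (your point (b)) are deferred to ``the methods of \cite{dpv1}'' without an argument. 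Point (b) is not a technicality; it is the entire content of the theorem, since everything you establish before it only shows that the right-hand side of \eqref{decomp1} is \emph{contained} in $\mathcal G(X)$ and coincides with $B_X*_d\mathcal F(X)$.

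Two further cautions. First, you cannot shortcut (b) by invoking the surjectivity statement of Theorem \ref{lisom}: in the paper that surjectivity is \emph{deduced from} the two decompositions \eqref{decomp} and \eqref{decomp1}, so using it here would be circular. Second, the deconvolution route you sketch for (b) is not straightforward: applying $Todd$-type operators piecewise to an arbitrary $f\in\mathcal G(X)$ produces a candidate $K$ on $\Lambda$, but showing $B_X*_dK=f$ presupposes that $f$ lies in the image of $B_X*_d$, which is what you are trying to prove; in the non-unimodular case the inversion formula (Theorem \ref{theo:main}) moreover involves all toric vertices, not just $Todd(X)$. The genuinely workable route is the one the original reference takes for $\mathcal G(X)$ directly (a support/deletion--contraction induction on the rational subspaces), independent of the discrete statement. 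Your sketch for the second bullet also leans on unproved support and jump relations for the $T_X^F$ (e.g.\ $T_X^F-T_X^{F'}\in D(X)$ and the passage from differential operators applied to $T_X^F$ to convolutions $T_{X\setminus\underline r}^{F}*q$), which would need to be established.
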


It follows from this theorem  that any $\theta$ in
$\mathcal G(X)$  is a piecewise polynomial distribution on $V$.

\subsection{An isomorphism}  We want to show now the strict relationship between the two spaces   ${\mathcal F(X)}$ and ${\mathcal G(X)}$.   We may use real valued functions $DM(X)\otimes\mathbb R$ and ${\mathcal F(X)}\otimes\mathbb R$ defined by the same difference equations.

The spaces   ${\mathcal F(X)}$ and ${\mathcal G(X)}$ are related by the semi-discrete convolution with the Box spline $B_X$.  Indeed, the following lemma generalizes the fact that a Dahmen-Micchelli quasi polynomial becomes a polynomial in $D(X)$ by  the semi-discrete convolution.

\begin{lemma}
If $f\in {\mathcal F(X)}$, then  $f*B_X\in {\mathcal G(X)}$.
\end{lemma}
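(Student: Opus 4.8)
The plan is to verify the defining difference conditions for $\mathcal{G}(X)$ directly, using the two key facts already established: the commutation identity $\partial_Y B_X = \nabla_Y B_{X\setminus Y}$ from \eqref{eq:nablabox}, and the compatibility of the operators $\partial_Y$, $\nabla_Y$ with semi-discrete convolution. So let $f \in \mathcal{F}(X)$; I must show that for every rational subspace $\underline{r} \in \mathcal{S}_X$, the distribution $\partial_{X\setminus \underline{r}}(f * B_X)$ lies in $\mathcal{D}'(V, \underline{r})$.

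The computation itself is short. Write $Y = X \setminus \underline{r}$. Since semi-discrete convolution is a convolution, differentiation passes to the $B_X$ factor and I may rewrite $\partial_Y(f * B_X) = f * (\partial_Y B_X)$; then by \eqref{eq:nablabox} this equals $f * (\nabla_Y B_{X \cap \underline{r}})$, and since $\nabla_Y$ is a finite combination of lattice translations which commute with semi-discrete convolution, this is $(\nabla_Y f) * B_{X \cap \underline{r}}$. Now I invoke the hypothesis $f \in \mathcal{F}(X)$: the function $\nabla_{X \setminus \underline{r}} f = \nabla_Y f$ is supported on $\Lambda \cap \underline{r}$. Finally $B_{X \cap \underline{r}}$ is a distribution supported inside $\underline{r}$ (its support is the zonotope $Z(X \cap \underline{r}) \subset \underline{r}$), so the semi-discrete convolution $(\nabla_Y f) * B_{X\cap\underline{r}} = \sum_{\lambda \in \Lambda \cap \underline{r}} (\nabla_Y f)(\lambda)\, t_\lambda B_{X\cap\underline{r}}$ is a sum of translates — by lattice vectors lying in $\underline{r}$ — of a distribution supported in $\underline{r}$, hence is itself supported in $\underline{r}$ and is the pushforward under $j$ of a distribution on $\underline{r}$. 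That is exactly the statement $\partial_{X\setminus\underline{r}}(f*B_X) \in \mathcal{D}'(V,\underline{r})$.

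The main point requiring a little care — and the step I expect to be the only genuine obstacle — is the bookkeeping in the identity $f * (\nabla_Y B_{X\cap\underline r}) = (\nabla_Y f) * B_{X\cap \underline r}$, i.e. moving the difference operator from the continuous factor to the discrete factor. This is the "associativity/commutativity" of mixed discrete-continuous convolution with lattice translations, which holds because $t_a$ acts the same way whether we regard $B_X$ as a distribution or view the translation as reindexing the sum defining $*_d$; one should spell out that $t_a(g *_d B) = (t_a g) *_d B = g *_d (t_a B)$ for $a \in \Lambda$. One should also note at the outset that $f * B_X$ is well defined as a distribution (the sum $\sum_\lambda f(\lambda) t_\lambda B_X$ is locally finite since $B_X$ has compact support), and that when $X$ spans $V$ it is the piecewise polynomial function $B_X *_d f$; by Theorem \ref{gestad1} membership in $\mathcal{G}(X)$ then automatically gives that $f * B_X$ is piecewise polynomial, which is the substantive consequence. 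With these points in place the proof is essentially the three-line chain of equalities above.
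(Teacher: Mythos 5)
Your proof is correct and follows the same route as the paper's: apply $\partial_{X\setminus\underline r}$, pass the derivative to the box spline, use \eqref{eq:nablabox} to convert it to $\nabla_{X\setminus\underline r}B_{X\cap\underline r}$, move the difference operator onto $f$, and conclude from the support hypothesis on $\nabla_{X\setminus\underline r}f$ together with the support of $B_{X\cap\underline r}$. The extra care you take with the commutation of lattice translations and semi-discrete convolution is a reasonable elaboration of a step the paper treats as immediate.
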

\begin{proof}
If $f\in {\mathcal F(X)}$, then  $\nabla_{X\setminus  \underline r }
f $ is supported on $  \underline r $ for every
 rational subspace    $ \underline r $.  We need to show that  $\partial_{X\setminus\underline r}f*B_X\in \mathcal D( \underline r )$ for every
 rational subspace    $ \underline r $.

 We have from Formula \eqref{eq:nablabox}
 $$\partial_{X\setminus\underline r}f *B_X=f *\partial_{X\setminus\underline r}B_X=f *\nabla_{X\setminus  \underline r }B_{X\cap\underline r} =(\nabla_{X\setminus  \underline r }f )*B_{X\cap\underline r} . $$ Since  $\nabla_{X\setminus  \underline r }
f $ is supported on  $ \Lambda\cap \underline r $, we have that  $(\nabla_{X\setminus  \underline r }f )*B_{X\cap\underline r}\in \mathcal D( \underline r )$  as desired.
 \end{proof}

 \begin{theorem}\label{lisom}
 The map $f\mapsto f*B_X$ induces a surjective map
$$i:{\mathcal F(X)}\otimes_{\mathbb Z}\mathbb R\to {\mathcal G(X)} $$ compatible with the two decompositions  \eqref{decomp} and  \eqref{decomp1}.\smallskip

If $X$ is unimodular, $i$ is a linear isomorphism.

\end{theorem}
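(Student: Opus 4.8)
The plan is to exploit the two direct-sum decompositions that are already in hand. By Theorem \ref{geneF}, once we fix a regular face $F_{\underline r}$ for each $X\setminus\underline r$, we have $\mathcal F(X)=\oplus_{\underline r\in S_X}\mathcal P_{X\setminus\underline r}^{F_{\underline r}}*DM(X\cap\underline r)$, and by Theorem \ref{gestad1} the parallel decomposition $\mathcal G(X)=\oplus_{\underline r\in S_X}T_{X\setminus\underline r}^{F_{\underline r}}*D(X\cap\underline r)$. The strategy is to show that the map $i(f)=f*B_X$ carries the $\underline r$-summand of $\mathcal F(X)\otimes\mathbb R$ into the $\underline r$-summand of $\mathcal G(X)$ and that, on each such summand, it is the restriction-to-polynomials map composed with semidiscrete convolution, which is already understood.

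First I would record the compatibility of $i$ with the filtration/grading by rational subspaces. The preceding lemma shows $f*B_X\in\mathcal G(X)$ whenever $f\in\mathcal F(X)$; tensoring with $\mathbb R$ is harmless. For a single summand take $f=\mathcal P_{X\setminus\underline r}^{F_{\underline r}}*g$ with $g\in DM(X\cap\underline r)\otimes\mathbb R$. Using $B_X=B_{X\setminus\underline r}*_c B_{X\cap\underline r}$ (a basic factorization of the Box spline over a direct sum decomposition of $X$) together with Proposition \ref{lem:BZ}, which gives $B_{X\setminus\underline r}*_d\mathcal P_{X\setminus\underline r}^{F_{\underline r}}=T_{X\setminus\underline r}^{F_{\underline r}}$, one computes
$$
\bigl(\mathcal P_{X\setminus\underline r}^{F_{\underline r}}*g\bigr)*B_X
=\bigl(\mathcal P_{X\setminus\underline r}^{F_{\underline r}}*B_{X\setminus\underline r}\bigr)*\bigl(g*B_{X\cap\underline r}\bigr)
=T_{X\setminus\underline r}^{F_{\underline r}}*\bigl(g*B_{X\cap\underline r}\bigr).
$$
Here $g\in DM(X\cap\underline r)\otimes\mathbb R$ is the restriction to $\Lambda\cap\underline r$ of a polynomial $\tilde g\in D(X\cap\underline r)\otimes\mathbb R$ (this restriction being surjective, and an isomorphism in the unimodular case), and by Theorem \ref{theo:act}, applied inside the subspace $\underline r$, $g*B_{X\cap\underline r}=I(X\cap\underline r)\tilde g\in D(X\cap\underline r)\otimes\mathbb R$. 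Hence $i(f)$ lies in $T_{X\setminus\underline r}^{F_{\underline r}}*D(X\cap\underline r)$, the matching summand of $\mathcal G(X)$. This simultaneously proves compatibility with the decompositions and, by Theorem \ref{theo:act} again, that on the summand $i$ is the composite of the restriction map $DM(X\cap\underline r)\otimes\mathbb R\to D(X\cap\underline r)\otimes\mathbb R$ (via $I(X\cap\underline r)$, an isomorphism) with an injection; so surjectivity onto each summand of $\mathcal G(X)$ follows because the restriction map is surjective, and injectivity (hence bijectivity) holds precisely when every such restriction map is an isomorphism, i.e.\ when $X$ is unimodular.

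The main obstacle I anticipate is bookkeeping rather than conceptual: one must be careful that the factorization $B_X=B_{X\setminus\underline r}*_c B_{X\cap\underline r}$ and the mixed associativity $(\mathcal P*g)*B_X=(\mathcal P*B_{X\setminus\underline r})*(g*B_{X\cap\underline r})$ are legitimate as identities of distributions, which requires the support/properness conditions on the convolutions (guaranteed because $X\setminus\underline r$ spans a pointed cone compatible with $F_{\underline r}$ and $B_{X\cap\underline r}$ is compactly supported). A second point needing care is that the decomposition of $\mathcal F(X)\otimes\mathbb R$ must use the \emph{same} choices of faces $F_{\underline r}$ as the decomposition of $\mathcal G(X)$, so that ``compatible with the two decompositions'' is literally an equality of the summand-wise maps; once the choices are pinned down this is automatic from the computation above. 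Finally, the unimodular case: there $D(X\cap\underline r)\to DM(X\cap\underline r)$ is an isomorphism for every rational $\underline r$ (stated in the excerpt), so $i$ is a direct sum of isomorphisms, hence an isomorphism.
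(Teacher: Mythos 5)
Your overall route is the one the paper itself takes: split $\mathcal F(X)\otimes\mathbb R$ and $\mathcal G(X)$ into the matching summands of \eqref{decomp} and \eqref{decomp1}, use $B_X=B_{X\setminus\underline r}*_cB_{X\cap\underline r}$ together with Proposition \ref{lem:BZ} to send the $\underline r$-summand to the $\underline r$-summand, and then analyze $g\mapsto g*B_{X\cap\underline r}$ on $DM(X\cap\underline r)\otimes\mathbb R$. However, one intermediate claim is false: you assert that every $g\in DM(X\cap\underline r)\otimes\mathbb R$ is the restriction to $\Lambda\cap\underline r$ of a polynomial $\tilde g\in D(X\cap\underline r)$, ``this restriction being surjective.'' The restriction map $D(Y)\to DM(Y)\otimes\mathbb R$ is injective but \emph{not} surjective unless $Y$ is unimodular: by Theorem \ref{convver}, $DM(Y)_{\mathbb C}=\oplus_{h\in\mathcal V(Y)}\hat h\,D(Y^h)_{\mathbb C}$, and restrictions of polynomials in $D(Y)$ account only for the $h=1$ summand. (Already for $\Lambda=\mathbb Z$ and $Y=[2]$, $DM(Y)\otimes\mathbb R$ is the $2$-dimensional space of $2$-periodic functions while $D(Y)$ is the constants.) Since you deduce both the identity $g*B_{X\cap\underline r}=I(X\cap\underline r)\tilde g$ and the surjectivity of $i$ from this premise, that step does not stand as written.

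The conclusion is nevertheless correct, and the repair is short. For surjectivity you only need that the image of $g\mapsto g*B_{X\cap\underline r}$ \emph{contains} $D(X\cap\underline r)$: restrictions of polynomials of $D(X\cap\underline r)$ do lie in $DM(X\cap\underline r)\otimes\mathbb R$ (this is the true direction), and on such restrictions the semidiscrete convolution acts as the invertible operator $I(X\cap\underline r)$ by Theorem \ref{theo:act}; Lemma \ref{daDMaD} gives the reverse containment of the image in $D(X\cap\underline r)$. This is exactly the assertion the paper makes (``$DM(X)\otimes\mathbb R$ maps surjectively to $D(X)$''). In the non-unimodular case the map has a nontrivial kernel, namely the summands $\hat h\,D((X\cap\underline r)^h)$ with $h\neq 1$, which are annihilated by item (3) of Theorem \ref{convver}; this is precisely why $i$ fails to be injective in general. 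In the unimodular case $\mathcal V(X\cap\underline r)=\{1\}$, the restriction map is an isomorphism, and your argument is valid there --- provided you also note, as the paper implicitly does, that unimodularity of $X$ is inherited by $X\cap\underline r$ for every rational subspace $\underline r$.
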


\begin{proof}
We have
$${\mathcal G(X)}=\oplus_{\underline r\in S_X}
T_{X\setminus r}^{F_{\underline r}}*D(X\cap \underline r ),$$
while
 $$
{\mathcal F(X)}=\oplus_{\underline r\in S_X} {\mathcal
P}_{X\setminus\underline r}^{F_{\underline r}}*DM(X\cap \underline r ).$$

 Under the mapping $f\to f*B_X$,  we have that $DM(X)\otimes\mathbb R$ maps surjectively to $D(X)$. Furthermore, in the  unimodular case, it induces a linear isomorphism onto $D(X)$.

Then consider an  element   ${\mathcal
P}_{X\setminus\underline r}^{F_{\underline r}}*u,\ u\in DM(X\cap \underline r )$ we have
$$ {\mathcal
P}_{X\setminus\underline r}^{F_{\underline r}}*u*B_X={\mathcal
P}_{X\setminus\underline r}^{F_{\underline r}}*u*B_{X\setminus\underline r}*B_{X\cap\underline r}=T^{F_{\underline r}}_{X\setminus\underline r}*(B_{X\cap\underline r}*u)$$ and the claim follows.
\end{proof}

%\marginpar{I added some thing on continuity}

\begin{remark}\label{cont}
If $X$ is unimodular,  the inverse of $i$ is given by the deconvolution formula:
$$i^{-1}h=\lim_{\c} Todd(X)*_{pw}h$$
where $\c$ is an alcove contained in $Z(X)$.

Consider a function $h\in {\mathcal G(X)} $. Then the locally polynomial function
$Todd(X)*_{pw}h$ coincide on topes $\tau\cap \Lambda$ with the function $g^{\tau}$ with $g=i^{-1}(h)\in \mathcal F(X)$.
It follows from the continuity properties of elements of $\mathcal F(X)$ that the locally polynomial function $Todd(X)*_{pw}h$ extends continuously on the cone generated by $X$.
In particular, if $Z(X)$ contains $0$ in its interior,  $Todd(X)*_{pw}h$ extends continuously on $V$.

The Box spline $B_X$ is a combination of translates of elements $T_X^F$ which belongs to ${\mathcal G(X)} $. It follows that $Todd(X)_{pw}B_X$ extends  continuously on $V$.
 \end{remark}

\newpage

\part{Geometry}

\section{ Equivariant $K$-theory and equivariant cohomology}

\subsection{Preliminaries} Although the theory we are going to review exists for a general compact Lie group $G$, we restrict our treatment to the case in which $G$ is a compact torus and denote by $\Lambda$ its character group.

We want to apply the preceding  purely algebraic results in order to compare the infinitesimal index, and the index associated to the symbol of a transversally elliptic operator on a linear representation   of $G$.

Let $N$ be a $G$ manifold provided with a real $G$-invariant form
$\sigma$.  We assume that $N$ is oriented.
 If $v_x$ is the vector field on $N$ associated to $x\in \mathfrak g$,  the moment
map $\mu: N\to \mathfrak g^*$ is defined by
$\ll \mu(n),x\rr =-\langle\sigma, v_x\rangle$, ($ n\in N,x\in \g$  and
the sign convention is that $v_x=\frac{d}{d\epsilon} \exp(-\epsilon x)n|_{\epsilon=0}$).
Define $Z$ as the zero fiber of the moment map.

%In \cite{dpv3}, we have   defined an {\em infinitesimal index} called $infdex$ which a homomorphism of $S[\g^*]$ modules, from the $H^*_{G,c}(Z)$ (the equivariant cohomology with compact support) to the space $D(\g^*)$  of  distributions on $\g^*$.

              \begin{remark}\label{rem:liouville}

We will mainly apply the construction described below to the case  where $N=T^*M$ is the cotangent bundle to a $G$-manifold $M$,
and $\omega$ is the Liouville one form defined for $m\in M$, $\xi\in T_m^*M$ and $V$ a tangent vector at the point $(m,\xi)\in T^*M$  by

 \begin{equation}\label{eq:liouville}
\omega_{m,\xi}(V)=\ll \xi,p_*V\rr.
                                 \end{equation}

By definition, the symplectic form $\Omega=-d\omega$ is the symplectic  form of $T^*M$, and we will use the corresponding orientation of $T^*M$ to compute integrals of differential forms on $T^*M$.

If $v_x$ is the
vector field  on $M$ associated to $x\in \mathfrak g$, the moment
map $\mu: T^*M\to \mathfrak g^*$ is then
$\ll \mu(m,\xi),x\rr =-\langle\xi, v_x\rangle$,  and
the zero fiber $Z$ of the moment map is denoted by $T^*_GM$. In a point $m\in M$ the fiber of the projection $p:T_G^*M\to M$
is the space of covectors conormal to the $G$ orbit through $m$.

For reasons explained later, we will use the opposite form $-\omega$, the opposite moment map $-\mu$ and, as stated before, the orientation given by $-d\omega$.

\end{remark}
\subsection{Transversally elliptic symbols and their index}
Let $Z$ be a topological space provided with an action of $G$.
Let $\CE^+$ and $\CE^-$ be two
complex equivariant vector bundles over $Z$.
Let  $\Sigma:\CE^+\to \CE^-$ be a $G$-equivariant morphism, and for $z\in Z$, denote by
 $\Sigma_z: \CE^+_z\to \CE^-_z$ the corresponding linear map.
 Recall that  {\em the support of $\Sigma$} is the subset of $Z$ consisting of elements $z$ where $\Sigma_z$ is not invertible.

Let us now recall  the definition of the multiplicity index map.

Let $\CE^+$ and $\CE^-$ be two
complex equivariant vector bundles over $M$.
 If $\Sigma:p^*\CE^+\to p^*\CE^-$ is a $G$-equivariant morphism,
 we say that $\Sigma$ is a $G$-equivariant  symbol.
Thus, for $m\in M,\xi\in T^*_mM$,  $\Sigma_{m,\xi}$ is a linear map : $\CE^+_m\to \CE^-_m$.
If the support of $\Sigma$ is a compact set, we say that $\Sigma$ is an elliptic symbol, and $\Sigma$ determines  an element $[\Sigma]$  in $K_G^0(T^*M)$.
If the support of $\Sigma$ intersects  $T^*_G M$ in a compact set, we say that $\Sigma$ is a transversally elliptic symbol (it is elliptic in the directions transverse to the $G$-orbits). Then   $\Sigma$ determines an element $[\Sigma]$  in $K_G^0(T^*_GM)$ and all elements of this group are obtained in this way (cf. \cite{At}).

Recall that  Atiyah-Singer \cite{At} have associated to any  transversally elliptic  symbol a   virtual representation of   $G$ of trace class. The induced trace of operators associated to smooth functions on $G$ is its index, a generalized function on $G$. By taking Fourier coefficients, one then gets a  homomorphism of $R(G)=\mathbb Z[\Lambda]$ modules
$${\rm ind}_m:K_G^0(T_G^*M)\to  \mathcal C_{\mathbb Z}[\Lambda]$$ called the index multiplicity function.
When the symbol $\Sigma$ is elliptic, one gets that ${\rm ind}_m(\Sigma)$ has finite support and the index is a virtual character of $G$.

 In \cite{dpv2}  we have studied in particular the case   $M=M_X$ the linear representation associated to a list $X$ of characters and proved that ${\rm ind}_m:K^0_G(T^*_GM_X)\to \mathcal C_\Z[\Lambda]$ gives an isomorphism of $\mathbb Z[\Lambda] $-modules onto $\tilde{\mathcal F}(X)$ (see Definition \ref{graf}). Moreover, if $M_X^f$  denotes the open set of points with finite stabilizer we have that the map ${\rm ind}_m$ establishes    an isomorphism of $K^0_G(T^*_GM_X^f)$ with $DM(X)$.

 Since, as we have recalled in Theorem \ref{geneF}, $\tilde{\mathcal F}(X)$
   is spanned over $\Z[\Lambda]$ by the elements $\mathcal P_X^F$ as $F$ runs over all regular faces for $X$, in order to find generators of
$K^0_G(T^*_GM_X)$, we are going to construct certain symbols whose index multiplicity gives $\mathcal P_X^F$.

\smallskip

\subsection{Some explicit computations  for $K$-theory}\label{Ktheory}

Let $Y$ be a sequence of vectors in $\Lambda $
and $M_Y=\oplus_{a\in Y}L_a$  the corresponding complex $G$-representation space.
We write $z=\oplus z_a$ an element of $M_Y$, with $z_a\in L_a$.

We choose a
 $G$-invariant Hermitian structure $<,>$ on $M_Y$.

We first recall here the description of  the generator $\Bott(M_Y)$ of  $K_G^0(M_Y)$, a free module of rank $1$  over $R(G)$.
 Let $E:=\bigwedge M_Y$ with the Hermitian structure induced by that of $M_Y$, graded as $E^+\oplus E^-$ by even and odd degree.
Then, for $z\in M_Y$, consider the exterior multiplication  $m(z):E\to E$, $ m(z)(\omega):=z\wedge \omega,$ and the Clifford  action
\begin{equation} \label{cli}
c(z)=m(z)-m(z)^*,
\end{equation}
of $M_Y$ on $E$. One has
$c(z)^2=-\|z\|^2$
so that $c(z)$ is an isomorphism, if $z\neq 0$,  exchanging the summands $E^+$ and $ E^-$ .
Consider the complex $G$-equivariant vector bundles $\CE^{\pm}=M_Y\times E^{\pm}.$
The $G$-equivariant morphism  from $\CE^+$ to $\CE^-$ defined by
$\Sigma_z(\omega)=c(z)\omega$ is supported at $0$, thus defines an element $\Bott(M_Y)$ of $K_G^0(M_Y)$,   generator of $K_G^0(M_Y)$ over $R(G)=\mathbb Z[\Lambda]$.

Notice that, if $Y=Y_1\cup Y_2$,
$\bigwedge  M_Y=\bigwedge M_{Y_1}\otimes \bigwedge M_{Y_2}$  and  $\Bott(M_Y)$ is the external tensor product of the symbols $\Bott(M_{Y_1})$ and $\Bott(M_{Y_2})$.

\begin{definition}\label{def:sigmaY}
 Given a $G$-invariant Hermitian structure $<,>$ on $M_Y$,  we define the $G$-invariant real one form
$$\sigma_Y=-\frac{1}{2}\Im <z,dz>.$$

\end{definition}

Here $\Im:\C\to \R $ is the imaginary part.

For example if  $M=L_a\sim \R^2$ and $z=v_1+iv_2$, then  $\sigma=\frac{1}{2} (v_1dv_2-v_2dv_1)$.

We consider the moment map $\mu_Y$ for $\sigma_Y$.
Thus $$\mu_Y(z)=\frac{1}{2}\sum_{a\in Y} |z_a|^2 a.$$

\begin{definition}\label{def:Z}
We define $Z_Y$ to be the zero fiber of the moment map $\mu_Y$:

$$Z_Y:=\{z\in M_Y\,|\, \sum_{a\in Y} |z_a|^2 a=0\}.$$

\end{definition}

This set $Z_Y$ was also denoted $M_Y^0$ in \cite{dpv4}.
However, as when $Y=X\cup -X$, we will  use different moment maps on $M_Y$, we keep the notation $Z_Y$ for the set of zeroes of the moment map $\mu_Y$ defined above.

The  following construction of some elements of $K_G^0(Z_Y)$ is due to Boutet de Monvel.

Of course the Bott element $\Bott(M_Y)$ restricts to $Z_Y$ as an element of $K_G^0(Z_Y)$. Let us construct some genuine elements not coming from the space  $K_G^0(M_Y)$.

We start with a simple case.  Assume  first that there is an element   $\phi\in \g$   which is  strictly positive on all the characters $a\in Y$. It follows that    $Z_Y=\{0\}$  and the equivariant $K$-theory   of $Z_Y$ is the  $\mathbb Z[\Lambda]$-module  generated by the class of the trivial vector bundle $\C$ over the point $Z_Y$ (which is in fact $\Bott(\{0\})$.

We come now to the case of an arbitrary sequence  $Y$  of weights.
 Let $F$ be a regular face for $Y$, we take a linear form $\phi\in F$, which  is non--zero on each element of $Y$.
We write $Y=A\cup B$, $A$ being the subsequence of elements on which $\phi$ takes positive values. $B$ being the subsequence of elements on which $\phi$ takes negative  values (notice that $A$ and $B$ depend only on $F$ and not on the choice of $\phi$). Accordingly we write $M_Y=M_A\oplus M_B$. Thus every $z\in M_Y$ can be uniquely decomposed as $z=z_A\oplus z_B$ with $z_A\in M_A$, $z_B\in M_B$.

Let $E_A=\bigwedge M_A$ graded as $E_A^+\oplus E_A^-$ taking the odd and even degree parts.

\begin{definition} The morphism
    $\Sigma^F$ between  the trivial bundles $M_Y\times E_A^+$ and $M_Y\times E_A^-$ is defined  by $$\Sigma_z^F=c(z_A)\ \ \forall z\in M_Y.$$
\end{definition}
It is clear that the support  of $\Sigma^F$ is the subspace $M_B$. We deduce
\begin{lemma}\label{zeridimu}   The intersection of the support of $\Sigma^F$  with $Z_Y$
      reduces to the zero vector.
      \end{lemma}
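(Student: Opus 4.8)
The plan is to identify, for each point $z\in Z_Y$, the condition under which $\Sigma^F_z = c(z_A)$ fails to be invertible, and then combine this with the defining equation of $Z_Y$. As already observed, the support of $\Sigma^F$ inside $M_Y$ is exactly the subspace $M_B$: indeed $c(w)$ is invertible whenever $w\neq 0$ (since $c(w)^2 = -\|w\|^2$), so $\Sigma^F_z$ is not invertible precisely when $z_A = 0$, i.e. when $z\in M_B$. Hence the intersection of the support of $\Sigma^F$ with $Z_Y$ is $M_B\cap Z_Y$, and the task reduces to showing $M_B\cap Z_Y = \{0\}$.

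First I would take $z\in M_B\cap Z_Y$. Writing $z = z_B$ with $z_a\in L_a$ for $a\in B$, the condition $z\in Z_Y$ reads $\sum_{a\in B}|z_a|^2 a = 0$ in $\g^*$. Now pair this identity with the linear form $\phi\in F$: since $\phi$ is strictly negative on every $a\in B$, we get $\sum_{a\in B}|z_a|^2\langle\phi,a\rangle = 0$ with all coefficients $|z_a|^2\geq 0$ and all $\langle\phi,a\rangle < 0$. This forces $|z_a|^2 = 0$ for every $a\in B$, hence $z = 0$. Therefore $M_B\cap Z_Y = \{0\}$, which is the claim.

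This argument is essentially complete and involves no real obstacle; the only thing to be careful about is bookkeeping about which subsequence ($A$ or $B$) the symbol's support lives on and the sign of $\phi$ on it, together with the elementary fact that $c(w)$ is an isomorphism for $w\neq 0$, both of which are already recorded in the excerpt. The same kind of positivity-of-the-moment-map-against-a-separating-functional computation is exactly what was used to treat the simple case $Z_Y=\{0\}$ when $\phi$ is positive on all of $Y$, so this lemma is just the relative version of that observation.
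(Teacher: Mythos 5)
Your proof is correct and follows essentially the same route as the paper's: both reduce to showing $M_B\cap Z_Y=\{0\}$ and then pair the moment-map equation $\sum_{b\in B}|z_b|^2 b=0$ against $\phi\in F$, using that $\phi$ is strictly negative on $B$ to force each $z_b=0$. Your write-up merely makes explicit the (already recorded) fact that the support of $c(z_A)$ is exactly $M_B$.
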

\begin{proof}     Indeed, in $Z_Y$,  $\sum_{a\in A} |z_a|^2a= -\sum_{b\in B} |z_b|^2 b$. If we are in the support of $\Sigma^F$  each $z_a=0$ so the we deduce that $ -\sum_{b\in B} |z_b|^2 b=0$. Since    $\phi$ takes a negative value  on each $b\in B$, this implies that $z_b=0$ for all $b$.
\end{proof}
We deduce that the restriction  of $\Sigma^F$ to $Z_Y$
defines an element of $K_G^0(Z_Y)$ still denoted by $\Sigma^F$.
\smallskip
%In $K$-theory, we have the identity
%$$(\Lambda M_B) \Sigma^F=\Bott(M_Y).$$

%

%

%

%

%

Let us now consider the case   $Y=X\cup -X$. In this case   $M_Y=M_X\oplus M_{X}^*=T^*M_X$ so that $\Bott(T^*M_X)$ gives a class
 in $K^0(T^*M_X)$, whose index
 is the trivial representation of $G$ (see   \cite{Ats1}).

We may restrict $\Bott(T^*M_X)$ to
$T^*_GM_X$ getting a class whose index multiplicity is  the delta  function $\delta_0$ on $\Lambda$.

In order to get further elements of $K_G^0(T^*_GM_X)$ we follow the construction of   Boutet de Monvel as follows.
\smallskip

Consider the $\mathbb R$--linear $G$-- isomorphism $h$ of $M_X^*$ with $M_{X}$ given by
$$ \xi(z):=\Re(<z ,h(\xi)>).$$
Here $ \Re(<z_1,z_2>)$ is the real part of the Hermitian product on $M_X$, a positive definite inner product.
       The isomorphism $h$ induces a
       $\mathbb R$--linear $G$-- isomorphism, still denoted by $h$, of $T^*M_X$ with $M_{X}\oplus M_X$.

Let $F$ be a regular face of the arrangement $X$ (and hence also of $Y=X\cup -X$).
 Let $\phi\in F$ so that $X=A\cup B$, with $\phi$ positive in $A$ and negative on $B$. We denote by $J$ the standard complex structure on $M_X$ and by $J_F$ the complex structure on $M_X$ defined as $J_F$ is $J$ on $M_A$ and $-J$ on $M_B$.
Then the list of weights of $G$  for this new complex structure on $M_X$  is $A\cup -B$.

 We consider the associated one form  $\sigma^F$  on $M$ which has moment map $$\nu_F(z)=\frac{1}{2}(\sum_{a\in A} |z_a|^2 a -\sum_{b\in B} |z_b|^2 b).$$ Clearly the zero fiber is reduced to $\{0\}$.

\begin{lemma}\label{changmom}
Consider the isomorphism of $T^*M_X$ with $M_X\oplus M_X$ given by
$$(z,\xi)\to [h(\xi)+J_Fz,h(\xi)-J_Fz].$$

In this isomorphism
the moment map $\mu$ on $T^*M_X$ associated to the Liouville form
    becomes the moment map $[\frac{1}{2}\nu_F,-\frac{1}{2}\nu_F]$  for $[\frac{1}{2}\sigma_{F},-\frac{1}{2}\sigma_{F}]$.

    In particular under the above isomorphism, the space $T^*_GM_X$ is identified with the zeroes of the moment map   $[\frac{1}{2}\nu_F,-\frac{1}{2}\nu_F]$.

\end{lemma}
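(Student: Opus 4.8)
The plan is to verify the claimed identification by a direct computation in coordinates on $T^*M_X = M_X \oplus M_X^*$, comparing both sides as maps to $\g^*$. First I would recall that under the standard identification $T^*M_X \cong M_X \oplus M_X^*$ and the $\R$-linear isomorphism $h : M_X^* \to M_X$, the Liouville form $\omega$ on $T^*M_X$ pulls back to the one-form on $M_X \oplus M_X$ whose value at a point, paired with a tangent vector, is $\Re\langle h(\xi), dz\rangle$ (up to the sign conventions fixed in Remark \ref{rem:liouville}, where we use $-\omega$). The corresponding moment map is $\langle \mu(z,\xi), x\rangle = -\langle \xi, v_x\rangle$, and since $v_x$ acts on $L_a$ by multiplication by $i\langle x\mid a\rangle$, one computes $\mu(z,\xi) = \sum_{a \in X} \Re\langle \bar{i}\, \xi_a, z_a\rangle\, a$ — i.e. the familiar "norm-square" moment map once $\xi$ is transported to $M_X$ via $h$.

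Next I would substitute the change of variables $(z,\xi) \mapsto (u,w) := (h(\xi) + J_F z,\ h(\xi) - J_F z)$. This is an $\R$-linear $G$-isomorphism of $T^*M_X$ with $M_X \oplus M_X$ precisely because $h$ and $J_F$ are $G$-equivariant $\R$-linear isomorphisms (note $J_F$ commutes with the $G$-action since it is $\pm J$ on each isotypic block $M_A$, $M_B$, and the $G$-action is $\C$-linear for $J$ hence commutes with $-J$ too). The key point is to check that in the $(u,w)$-coordinates the moment map splits as a direct sum $[\tfrac12\nu_F, -\tfrac12\nu_F]$ where $\nu_F$ is the moment map for the twisted complex structure $J_F$, namely $\nu_F(v) = \tfrac12(\sum_{a\in A}|v_a|^2 a - \sum_{b\in B}|v_b|^2 b)$. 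Since $h(\xi) = \tfrac12(u+w)$ and $J_F z = \tfrac12(u-w)$, one has $z = -\tfrac12 J_F(u-w)$ (using $J_F^2 = -1$), and I would plug these into the bilinear expression for $\mu$ and expand: the cross terms between $u$ and $w$ cancel by the symmetry of $\Re\langle\cdot,\cdot\rangle$ and the fact that $J_F$ is skew for the real inner product, leaving a pure $|u|^2$ term minus a pure $|w|^2$ term, each of which reassembles into $\pm\tfrac12\nu_F$ once one tracks the sign of $J_F$ on the $A$- versus $B$-blocks — this is exactly where the weights $A \cup -B$ of the twisted structure enter. The final sentence of the lemma is then immediate: $T^*_GM_X = \mu^{-1}(0)$ corresponds under the isomorphism to $\{(u,w) : \nu_F(u) = 0,\ \nu_F(w) = 0\}$, i.e. the zero fiber of $[\tfrac12\nu_F, -\tfrac12\nu_F]$.

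The main obstacle I anticipate is purely bookkeeping: getting all the sign conventions consistent. There are several competing sign choices in play — the convention $v_x = \frac{d}{d\epsilon}\exp(-\epsilon x)n|_{\epsilon=0}$, the use of $-\omega$ rather than $\omega$ in Remark \ref{rem:liouville}, the factor $-\tfrac12\Im$ in the definition of $\sigma_Y$, and the asymmetric roles of $u$ and $w$ (one gets $+\tfrac12\nu_F$, the other $-\tfrac12\nu_F$). I would organize the computation by first doing the single case $M = L_a$ (so $z = v_1 + iv_2 \in \R^2$, $\xi \in (\R^2)^*$) where everything is $2\times 2$ matrices, pin down the signs there against the worked example $\sigma = \tfrac12(v_1 dv_2 - v_2 dv_1)$ given after Definition \ref{def:sigmaY}, and then assemble the general case as an orthogonal direct sum over $a \in A$ and $b \in B$, the $B$-summands carrying the extra sign from $J_F = -J$. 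No deep input is needed beyond linear symplectic algebra and the equivariance of $h$ and $J_F$.
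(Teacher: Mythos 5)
The paper states Lemma \ref{changmom} without proof, so there is no ``paper's argument'' to compare against; your direct coordinate computation is certainly the intended verification, and the core of it is correct. Writing $u=h(\xi)+J_Fz$, $w=h(\xi)-J_Fz$, the bilinear expression $\ll\mu(z,\xi),x\rr=-\sum_a\Im\ll z_a,h(\xi)_a\rr\,\ll a,x\rr$ does expand, exactly as you say, into $\frac14(|u_a|^2-|w_a|^2)$ on the $A$-blocks and $-\frac14(|u_b|^2-|w_b|^2)$ on the $B$-blocks, the cross terms cancelling; this reassembles into $\frac12\nu_F(u)-\frac12\nu_F(w)$, which is the moment map of the one-form $\frac12\sigma_F\oplus(-\frac12\sigma_F)$ on $M_X\oplus M_X$. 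Your plan of pinning down signs on a single $L_a$ against the worked example after Definition \ref{def:sigmaY} is sound.

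However, your last step contains a genuine error. The moment map of a direct sum of one-forms is the \emph{sum} of the component moment maps, so the zero fiber of $[\frac12\nu_F,-\frac12\nu_F]$ is $\{(u,w):\nu_F(u)=\nu_F(w)\}$, \emph{not} $\{(u,w):\nu_F(u)=0,\ \nu_F(w)=0\}$. Since $\nu_F^{-1}(0)=\{0\}$ (the cone spanned by $A\cup-B$ is pointed), the set you wrote down is just the origin of $M_X\oplus M_X$, which cannot be $T^*_GM_X$: already for $M_X=L_a$ the conormal variety $T^*_GM_X$ contains $(z,0)$ for every $z$ and has codimension one. The correct conclusion --- that $T^*_GM_X$ goes to $\{\nu_F(u)=\nu_F(w)\}$ --- is still immediate from the moment-map identification you proved, and it is this version that is needed afterwards: the support of $\Sigma^F$ is $\{u=0\}$, and intersecting $\{u=0\}$ with $\{\nu_F(u)=\nu_F(w)\}$ forces $\nu_F(w)=0$, hence $w=0$; the reduction of $T^*_GM_X$ to a point would make Lemma \ref{zeridimu} and the subsequent index computations vacuous. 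So the fix is one sentence, but as written the final claim is false.
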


We define the map \begin{equation}\label{mappaF}p_F: T^*M_X\to M_X\end{equation} by $p_F(z,\xi)=h(\xi)+J_F z$.
\begin{definition}\label{At} We set
$$\Sigma^F(z,\xi)=c(h(\xi)+J_F z).$$

In other words, $\Sigma^F=p_F^* \Bott(M_X)$ is the pull-back of the morphism $\Bott(M_X)$ by $p_F$.

\end{definition}

By Lemma \ref{changmom} and  Lemma \ref{zeridimu}, the intersection of the support of $\Sigma^F$ with $T_G^*M$ is reduced to the zero vector. Thus $\Sigma^F$
determines an element of $K_G^0(T^*_G M_X)$
 which  depends only of the connected component $F$  of $\phi$ in the set of regular elements.

%\marginpar{added more on definition of $\rho$}
Denote by $\rho$ the representation of $G$ in $M_X$, and  also by $\rho$ the infinitesimal action of $\g$ in $M_X$. If $\phi\in \g$, and $z=\sum_a z_a$ is in $M_X$, then $\rho(\phi)z=\sum_a i\langle a,\phi\rangle z_a.$
\begin{lemma} The symbol $\Sigma^F$ is equal in $K$-theory to the Atiyah symbol $$At^F(z,\xi)=c(h(\xi)+\rho(\phi) z)$$ (see \cite{dpv2}).\end{lemma}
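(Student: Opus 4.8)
The goal is to show that the two symbols $\Sigma^F(z,\xi)=c(h(\xi)+J_F z)$ and $At^F(z,\xi)=c(h(\xi)+\rho(\phi)z)$ define the same class in $K_G^0(T^*_GM_X)$. The strategy is the standard homotopy argument for $K$-theory classes of symbols: I want to produce a $G$-equivariant family of symbols $\Sigma_t$, $t\in[0,1]$, with $\Sigma_0=\Sigma^F$ and $\Sigma_1=At^F$, such that for every $t$ the intersection of the support of $\Sigma_t$ with $T^*_GM_X$ stays compact (in fact reduces to the zero section). Since both symbols are of the form $c(L_t(z,\xi))$ for an $\R$-linear $G$-equivariant bundle map $L_t\colon T^*M_X\to M_X$, and $c(w)$ is invertible exactly when $w\neq 0$, the support of $c(L_t(z,\xi))$ is the kernel of $L_t$; so the whole question reduces to checking that $\ker(L_t)\cap T^*_GM_X=\{0\}$ along the homotopy.

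\textbf{Key steps.} First, I would linearly interpolate the two maps: set
\[
L_t(z,\xi)=h(\xi)+\bigl((1-t)J_F+t\,\rho(\phi)\bigr)z .
\]
Both $J_F$ and $\rho(\phi)$ are $G$-equivariant $\R$-linear automorphisms of $M_X$ that act block-diagonally with respect to the decomposition $M_X=\bigoplus_{a\in X}L_a$: on $L_a$, $J_F$ is multiplication by $\pm i$ (sign $+$ for $a\in A$, $-$ for $a\in B$), while $\rho(\phi)$ is multiplication by $i\langle a,\phi\rangle$, with $\langle a,\phi\rangle$ of the same sign ($>0$ on $A$, $<0$ on $B$) since $\phi\in F$. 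Hence on each $L_a$ the operator $(1-t)J_F+t\,\rho(\phi)$ is multiplication by $i\bigl((1-t)\varepsilon_a+t\langle a,\phi\rangle\bigr)$ with $\varepsilon_a=\pm1$ matching the sign of $\langle a,\phi\rangle$; the real scalar $(1-t)\varepsilon_a+t\langle a,\phi\rangle$ never vanishes for $t\in[0,1]$, so $(1-t)J_F+t\,\rho(\phi)$ stays an invertible $G$-equivariant automorphism of $M_X$, with the same ``complex structure type'' (signs) as $J_F$. Second, I would identify the support of $c(L_t(z,\xi))$ on $T^*_GM_X$. Writing $w=(1-t)J_F+t\rho(\phi)$, invertibility of $w$ lets me rewrite $L_t(z,\xi)=0$ as $z=-w^{-1}h(\xi)$. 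Plugging this into the moment-map equation cutting out $T^*_GM_X$ and arguing as in Lemma~\ref{zeridimu}/Lemma~\ref{changmom} — i.e., the moment map for the form attached to the complex structure with weights of one sign on $A$ and the other on $B$ has zero fibre reduced to $\{0\}$ — forces $\xi=0$ and then $z=0$. Thus $\mathrm{supp}(\Sigma_t)\cap T^*_GM_X=\{0\}$ for all $t$, so $(\Sigma_t)_{t\in[0,1]}$ is an allowed homotopy of $G$-transversally elliptic symbols and $[\Sigma^F]=[At^F]$ in $K_G^0(T^*_GM_X)$.

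\textbf{Main obstacle.} The routine part is the interpolation and the sign bookkeeping on each line $L_a$; the only real point requiring care is the second step — verifying that the homotopy keeps the support transversally compact, i.e.\ that $\ker(L_t)\cap T^*_GM_X=\{0\}$ for every intermediate $t$, not just at the endpoints. This is exactly where one must re-run the moment-map computation of Lemma~\ref{changmom} with $J_F$ replaced by the one-parameter family $w=(1-t)J_F+t\rho(\phi)$: one needs that conjugating $T^*_GM_X$ by $[\tfrac12,-\tfrac12]$-type maps built from $w$ still lands in the zero fibre of a ``definite'' moment map (same signs on $A$ and $B$), so that Lemma~\ref{zeridimu} applies verbatim. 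Once that is in place the conclusion is immediate. (Alternatively, one could avoid the geometric argument and simply observe that $At^F$ and $\Sigma^F$ both restrict, via the identification of Lemma~\ref{changmom}, to the same external tensor product of Bott symbols up to an $R(G)$-equivariant homotopy of the underlying linear isomorphism of $M_X$ — but the homotopy argument above is the cleanest.)
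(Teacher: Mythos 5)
Your proposal is correct and follows essentially the same route as the paper: the paper's proof consists precisely of the linear homotopy $At^F_t(z,\xi)=c(h(\xi)+(t\rho(\phi)+(1-t)J_F)z)$, with the remark that each $At^F_t$ is transversally elliptic. You have merely spelled out the sign bookkeeping and the moment-map verification that the paper leaves as "it is easy to see."
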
\begin{proof}
 Indeed, for $t\in [0,1]$,  it is easy to see that the
  $$At^F_t(z,\xi)=c(h(\xi)+(t\rho(\phi)+(1-t)J_F) z)$$
is  transversally elliptic. Thus $At^F$ and $\Sigma^F$ being homotopic coincide in $K$-theory.\end{proof}

%The  support of  $At^F^t(z,\xi)$ is obtained by $h(\xi)=-(t\rho(\phi)+(1-t)J_F)z$.
%If $(z,\xi)$ is in $T_G^*M$, we thus have
%$<h(\xi),\rho(\phi)(z)>=0$.
%But this is   $< -(t\rho(\phi)+(1-t)J_F)z,\rho(\phi)z>$,
%the strictly negative quantity $\sum_{a\in A}  (-t \ll a,\phi\rr ^2+(1-t)\ll a,\phi\rr) [z_a|^2+\sum_{b\in B} (-t \ll a,\phi\rr ^2-(1-t)\ll b,\phi\rr)|z_b|^2.$
%
%
%
%

Let us consider $$\Theta_X^F(e^x)=\sum_{\lambda}\mathcal P_X^F(\lambda) e^{i\ll \lambda,x\rr }$$ as a generalized function on $G$.

Recall Atiyah theorem, \cite{At}, \S 6 (see also Appendice 2 \cite{BV1}).
\begin{theorem}\label{indexAt}
$$\index(At^F)(g)=(-1)^{|X|}g^{a_X}\Theta_X^F(g)=\Theta_{-X}^F(g).$$
\end{theorem}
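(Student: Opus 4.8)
The statement to prove is Atiyah's index formula for the symbol $At^F$, namely $\index(At^F)(g)=(-1)^{|X|}g^{a_X}\Theta_X^F(g)=\Theta_{-X}^F(g)$, where $a_X=\sum_{a\in X}a$. The plan is to reduce to the two elementary building blocks and then assemble by multiplicativity. First I would observe that $At^F$ is built from the Bott symbol $\Bott(M_X)$ by pulling back along $p_F(z,\xi)=h(\xi)+J_Fz$, and that under the decomposition $X=A\cup B$ induced by $\phi\in F$ the complex structure $J_F$ makes $M_X$ (with its new structure) the representation $M_{A\cup -B}$. Since $\Bott$ is multiplicative for direct sums ($\Bott(M_{Y_1\cup Y_2})$ is the external tensor product of $\Bott(M_{Y_1})$ and $\Bott(M_{Y_2})$, as noted after \eqref{cli}) and the index of an external tensor product of transversally elliptic symbols is the product of the indices, it suffices to treat a single weight, i.e. the case $X=[a]$ for $a\in\Lambda$, in each of the two sign cases $a\in A$ and $a\in B$.

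\textbf{The rank-one computation.} For $X=[a]$ with $a$ in the ``positive'' part $A$, $At^F$ restricted to $T^*_GM$ is (homotopic to) the Atiyah symbol on $\C$ with weight $a$ built from $c(h(\xi)+\rho(\phi)z)$; this is exactly the symbol whose index Atiyah computes in \cite{At}, \S6: its index multiplicity is the partition function $\mathcal P_{[a]}^{F}=\sum_{k\geq 0}\delta_{ka}$ (or rather the opposite-convention variant giving $\Theta^F_{[-a]}$, accounting for the sign $(-1)^{|X|}g^{a_X}$ and the orientation convention $-\omega$ fixed in Remark \ref{rem:liouville}). For $a$ in the ``negative'' part $B$ the complex structure is conjugated, so the single weight contributes $\sum_{k\geq 1}e^{-ka}$, matching the $B$-factor in $\Theta_X^F=(-1)^{|B|}e^{-\sum_{b\in B}b}\Theta_{A\cup-B}$. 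Concretely I would either invoke Atiyah's Lemma directly (it is literally the case he treats) or verify it by an explicit Dolbeault-type computation on $\C$: the relevant operator is $\bar\partial$ (resp. $\partial$) on $\C$ equivariantized by $e^{i\langle a,\phi\rangle}$, whose $G$-index is the geometric series. The key consistency check is that the orientation and moment-map sign conventions adopted here (the ``opposite form $-\omega$'' of Remark \ref{rem:liouville}) are precisely what converts Atiyah's formula into the stated form with $\Theta_{-X}^F$ rather than $\Theta_X^F$.

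\textbf{Assembly and the obstacle.} With the rank-one case in hand, write $\Sigma^F$ (equivalently $At^F$) as the external tensor product over $a\in X$ of the rank-one symbols on each $L_a$ (using Lemma \ref{changmom} to identify $T^*_GM_X$ with the zero fibre of $[\tfrac12\nu_F,-\tfrac12\nu_F]$ compatibly with this product decomposition, and the fact that for a product of transversally elliptic symbols the index is the convolution/product of the individual indices). Multiplying the geometric series factor by factor gives $\prod_{a\in A}\sum_{k\geq0}e^{ka}\cdot\prod_{b\in B}\sum_{k\geq1}e^{-kb}=\Theta_X^F$ up to the overall sign $(-1)^{|B|}$ and shift $e^{-\sum_{b\in B}b}$, which is exactly how $\Theta_X^F$ was defined; tracking the global normalization $(-1)^{|X|}g^{a_X}$ then yields the equality $\index(At^F)(g)=(-1)^{|X|}g^{a_X}\Theta_X^F(g)=\Theta_{-X}^F(g)$, the last equality being the identity $(-1)^{|X|}g^{a_X}\Theta_X^F=\Theta_{-X}^F$ obtained by substituting $a\mapsto-a$ in each factor. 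The main obstacle I anticipate is not the multiplicativity bookkeeping but pinning down all the signs and the duality/orientation conventions so that the reduction to Atiyah's rank-one lemma comes out with $-X$ and not $X$: one must carefully reconcile (i) the convention $v_x=\tfrac{d}{d\epsilon}\exp(-\epsilon x)n|_{\epsilon=0}$, (ii) the choice of $-\omega$ and the orientation from $-d\omega$, and (iii) the grading $E^+\oplus E^-$ by odd/even degree used in the definition of $\Sigma^F$ (note it is $E_A^+$ = odd part, which itself flips a sign); getting these straight is where essentially all the work lies, the rest being a direct appeal to \cite{At}, \S6 and Appendice 2 of \cite{BV1}.
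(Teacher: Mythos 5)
The paper does not actually prove this statement: Theorem \ref{indexAt} is introduced by ``Recall Atiyah theorem'' and is simply quoted from \cite{At}, \S 6 (see also \cite{BV1}, Appendice 2). So your proposal is being compared not with an argument in the paper but with the cited source --- and your outline (decompose $\Sigma^F=p_F^*\Bott(M_X)$ as an external tensor product of rank-one symbols over the weights $a\in X$, compute each rank-one index as a geometric series, multiply) is in substance Atiyah's own proof. Reconstructing it rather than citing it is a legitimate, genuinely more self-contained route; what the citation buys the authors is precisely not having to resolve the two points below.

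Two caveats on your execution. First, multiplicativity of the index for an external product of transversally elliptic symbols under the \emph{same} torus acting on both factors is not formal: $T^*_G(M_1\times M_2)$ is strictly larger than $T^*_GM_1\times T^*_GM_2$, so you must check that the product symbol is still transversally elliptic (Lemmas \ref{changmom} and \ref{zeridimu} do this here) and invoke Atiyah's multiplicativity theorem together with the fact that the product of the resulting generalized functions is well defined (all the geometric series have Fourier support in the pointed cone on which $\phi$ is positive). Second, the per-factor answers you write down do not multiply to the stated formula: $\prod_{a\in A}\sum_{k\geq0}e^{ka}\cdot\prod_{b\in B}\sum_{k\geq1}e^{-kb}$ equals $(-1)^{|B|}\Theta_X^F$, which differs from $(-1)^{|X|}g^{a_X}\Theta_X^F=\Theta_{-X}^F$ by the factor $(-1)^{|A|}g^{a_X}$. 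The correct rank-one contributions are $\Theta^F_{[-a]}=-\sum_{k\geq1}g^{ka}$ for $a\in A$ and $\Theta^F_{[-b]}=\sum_{k\geq0}g^{-kb}$ for $b\in B$, and one checks that $\prod_{a\in X}\Theta^F_{[-a]}=\Theta^F_{-X}$ exactly; in other words the sign and the shift $g^{a_X}$ must be produced by the one-dimensional computation itself, not appended afterwards as a ``global normalization''. This is consistent with Theorem \ref{ATin} ($\ind_m(\Sigma^F)=\mathcal P_{-X}^F$) and with the consistency check $\prod_{a\in X}(1-g^{-a}e^{-i\ll a,x\rr})\index(At^F)(e^x)=1$ in the Remark following the theorem. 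You correctly flag the sign conventions as ``where essentially all the work lies'', but as written the sketch pins the answer down only up to the factor $(-1)^{|A|}g^{a_X}$, which is exactly the content one is borrowing from Atiyah's rank-one lemma.
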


We translate immediately this theorem  as follows.

 \begin{theorem}\label{ATin}
 Let $F$ be a regular face.
  Let $$\Sigma^F= p_F^*\Bott(M_X).$$
 Then $\ind_m(\Sigma^F)=\mathcal P_{-X}^{F}$

 \end{theorem}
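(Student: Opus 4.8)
The plan is to reduce Theorem \ref{ATin} to Atiyah's index formula (Theorem \ref{indexAt}), which is already quoted, and then simply track the combinatorial bookkeeping through the Fourier transform. First I would recall the precise dictionary between generalized functions on $G$ and functions on $\Lambda$: if a generalized function $\Phi$ on $G$ is written $\Phi(g)=\sum_{\lambda\in\Lambda} c(\lambda)\, g^\lambda$, then by definition of $\ind_m$ the multiplicity function attached to a symbol $\Sigma$ is exactly the coefficient function $\lambda\mapsto c(\lambda)$ of its index $\index(\Sigma)(g)$. Since, by the previous lemma, $\Sigma^F=p_F^*\Bott(M_X)$ coincides in $K$-theory with the Atiyah symbol $At^F$, we have $\index(\Sigma^F)=\index(At^F)$, and hence $\ind_m(\Sigma^F)=\ind_m(At^F)$.

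Next I would invoke Theorem \ref{indexAt}, which gives
$$\index(At^F)(g)=\Theta_{-X}^F(g).$$
By the definition of the polarized theta series, $\Theta_{-X}^F(e^x)=\sum_{\lambda\in\Lambda}\mathcal P_{-X}^F(\lambda)\, e^{i\ll\lambda,x\rr}=\sum_\lambda \mathcal P_{-X}^F(\lambda)\, g^\lambda$. Therefore the Fourier coefficient of $\index(At^F)$ at $\lambda$ is precisely $\mathcal P_{-X}^F(\lambda)$, i.e. $\ind_m(At^F)=\mathcal P_{-X}^F$. Combining with the previous paragraph yields $\ind_m(\Sigma^F)=\mathcal P_{-X}^F$, which is the claim. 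One should double-check that the regular face $F$, which was defined as a regular face of the arrangement $X$, is used in $\Theta_{-X}^F$ in the consistent way: $F$ is simultaneously a regular face for $X$ and for $-X$ (a linear form is nonzero on $a$ iff it is nonzero on $-a$), and the sign/polarization conventions in the definition of $\Theta_Y^F$ are set up exactly so that reversing signs of the vectors in $B$ is compatible — this is the only place where a factor could go wrong.

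The genuine content is thus entirely in Theorem \ref{indexAt} (Atiyah's computation) and in the homotopy lemma identifying $\Sigma^F$ with $At^F$; both are quoted or proved above, so the present statement is a translation. The main obstacle, such as it is, is bookkeeping: making sure the sign $(-1)^{|X|}$ and the monomial $g^{a_X}=g^{\sum_{a\in X}a}$ in Atiyah's formula are correctly absorbed into the identity $\Theta_X^F=(-1)^{|X|}g^{-a_X}\Theta_{-X}^F$, equivalently $\Theta_{-X}^F(g)=(-1)^{|X|}g^{a_X}\Theta_X^F(g)$, which follows from the relation $\Theta_Y^F=(-1)^{|B|}e^{-\sum_{b\in B}b}\Theta_{A\cup-B}$ recalled just before Theorem \ref{indexAt}, applied once to $Y=X$ and once to $Y=-X$. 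Once that identity is in hand, reading off Fourier coefficients is immediate and the proof is complete.
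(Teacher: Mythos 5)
Your proposal is correct and is essentially the paper's own argument: the paper proves Theorem \ref{ATin} by noting it is an immediate translation of Atiyah's formula (Theorem \ref{indexAt}) via the homotopy identification of $\Sigma^F$ with $At^F$ and the definition $\Theta_{-X}^F=\sum_\lambda \mathcal P_{-X}^F(\lambda)e^{\lambda}$. Your extra check of the sign identity $\Theta_{-X}^F(g)=(-1)^{|X|}g^{a_X}\Theta_X^F(g)$ is accurate and merely makes explicit the bookkeeping the paper leaves implicit.
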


This identity agrees with some simple remark.
\begin{remark}
In the $K$-theory of $K_G^0(T^*_GM)$, we have the identity
$$(\Lambda M_{-X}) \Sigma^F=\Bott (M_X\oplus M_{-X}).$$
Applying the index, we obtain the identity
$$\prod_{a\in X}(1-e^{-i \ll a,x\rr })\index(At^F)(e^x)=1.$$
 \end{remark}

%\newpage
\subsection{The infinitesimal index}\label{indinfe}

 Consider $N$ an oriented  $G$-manifold, equipped  with a $G$-invariant $1$-form $\sigma$. Recall that $Z$ is the set of zeroes of the moment map $\mu: N\to \g^*$.

We denote by $\iota_x$ the contraction of a differential form on $N$ by the vector field $v_x$ associated to $x$ on $N$.
We have defined in \cite{dpv3} a Cartan model for the equivariant cohomology with compact supports $H_{G,c}^*(Z)$  of the subset $Z$ of $N$.  This is a $\Z$-graded space.  A representative of this group is an equivariant form $\alpha(x)$ with compact support: $\alpha:\g\to \mathcal A_c(N)$ such that $D(\alpha)$ is equal to $0$ in a neighborhood of $Z$.  The dependance of $\alpha$ in $x$ is polynomial.
Here   $\mathcal A_c(N)$ is the space of differential forms with compact supports on $N$, while  the equivariant differential $D$ is defined by $$D\alpha(x)=d\alpha(x) -\iota_x\alpha(x).$$

Clearly, an element $\alpha\in H_{G,c}^*(N)$ of the  equivariant cohomology with compact supports of $N$ defines a class in
$H_{G,c}^*(Z)$. Indeed $D\alpha=0$ on all $N$.

\begin{remark}\label{rem:almostcompact}

If     $\alpha$ is an equivariantly closed form  on $N$  such that the support of $\alpha$ intersected with
$Z$ is a compact set, we associate to $\alpha$ an element $[\alpha]_c$ in
the  equivariant cohomology with compact supports of $Z$ defined as follows.
Take a $G$-invariant function $\chi$ equal to $1$ in a neighborhood of $Z$ and supported sufficiently near $Z$, then $\chi\alpha$ is compactly supported on $N$ and $D\alpha=0$ in the neighborhood of $Z$ where $\chi=1$, thus
 defines a class $[\alpha]_c$ in $H_{G,c}^*(Z)$ independent of the choice of $\chi$.

\end{remark}

\bigskip

Let us now recall the definition of the infinitesimal index of
$[\alpha]\in H_{G,c}^*(Z).$
Let $\Omega=d\sigma$ and set
 $\Omega(x)=D \sigma(x)=\mu(x)+\Omega$. $\Omega(x)$ is
a closed (in fact exact) equivariant  form on $N$.
If $f$ is a smooth function on $\mathfrak
g^*$ with compact support, let
  $$\hat f(x):= \int_{\mathfrak g^*}e^{-i\langle \xi\,|\,x\rangle}f(\xi)d\xi$$
  be  the Fourier transform of $f$.
Choose  the measure  $dx$  on $\mathfrak g$  so that  the inverse Fourier
transform is $
f(\xi)=\int_{\mathfrak g}e^{i\langle \xi\,|\, x\rangle}\hat
f(x)dx$, thus $\hat f(x)dx$  is
independent of the choices.

 The double integral
$$\int_{N}\int_{\mathfrak g} e^{is\Omega(x)}\alpha(x) \hat f(x) dx$$
is independent of $s$ for $s$ sufficiently large.

We then defined:
\begin{equation}\label{definf}
 \langle \infdex_G^{\mu}([\alpha]),f\rangle :=\lim_{s\to \infty} \int_{N}
 \int_{\mathfrak g}
 e^{is \Omega(x)} \alpha(x) \hat f(x) dx.
 \end{equation}
This is a well defined map from $H_{G,c}^*(Z)$ to   distributions on $\mathfrak g^*$.
     It is a map of $S[\g^*]$ modules, where $\xi\in \g^*$  acts on forms by  multiplication by   $\ll \xi,x\rr$  and on distributions  by $i\partial_{\xi}$.

As the notation indicates, $\infdex_G^{\mu}$ depends only on $\mu$, and  not on the choice of the real invariant form $\sigma$ with moment map $\mu$.
We can obviously also change $\mu$ to a positive multiple of $\mu$ without changing the infinitesimal index. However, the change of $\mu$ to $-\mu$ usually changes radically the map $\infdex$.

 \begin{remark}\label{globalcompact}

Clearly, if $\alpha$ is  a closed equivariant form with compact support on $N$,    then    the infinitesimal index is just the Fourier transform   of the equivariant integral
 $ \int_{N} \alpha(x)$ of the equivariant form $\alpha$, as $e^{is\Omega(x)}=e^{isD\sigma(x)}$ is equivalent to $1$ for any $s$.

 \end{remark}

     We have extended,   using the same formula (\ref{definf}), the definition of $\infdex_G^{\mu}$ to a $\Z/2\Z$ graded cohomology space
      $\mathcal
H^{\infty,m}_{G,c}(Z)$ that we now define.

A representative of  a class $[\alpha]$ in
$\mathcal
H^{\infty,m}_{G,c}(Z)$ is a smooth map $\alpha:\g \to \mathcal A_c(N)$, such that the dependance of $\alpha(x)$  in $x$ is of at most polynomial growth, and  such that $D(\alpha)$ is equal to $0$ in a neighborhood of $Z$.
      The index $m$ indicates the moderate growth  on $\mathfrak g$ .

As we will recall in Subsection \ref{chern} , the equivariant  Chern character $\ch(\Sigma)$  of an element $\Sigma\in K_G^0(Z)$ belongs to the space $\mathcal
H^{\infty,m}_{G,c}(Z)$.

We note the following.
\begin{proposition} Let $b(x)=\int_{\g^*}e^{i\ll \xi,x\rr} m(\xi)d\xi$ be the Fourier transform of a    compactly supported distribution $m(\xi)$  on $\g^*$. Then
$b(x)$ is a   function on $\g$ of moderate growth  and
the space
$\mathcal
H^{\infty,m}_{G,c}(Z)$ is stable by multiplication by $b(x)$.

Furthermore

\begin{equation}\label{eq:convol}
\infdex^{\mu}_G(b \alpha)=m*\infdex_G^{\mu}(\alpha).
\end{equation}
\end{proposition}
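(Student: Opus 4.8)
The plan is to prove the two assertions in turn, both by reducing to the definition \eqref{definf} of the infinitesimal index and interchanging the order of operations.

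First I would establish that $b(x)$ has moderate growth on $\g$. Since $m(\xi)$ is a compactly supported distribution on $\g^*$, its Fourier transform $b(x) = \langle m(\xi), e^{i\ll\xi,x\rr}\rangle$ is a smooth function, and by the Paley--Wiener--Schwartz theorem it is of at most polynomial growth (the order of $m$ controls the polynomial degree). This is the easy part and I would dispatch it in a sentence. Next, the stability of $\mathcal H^{\infty,m}_{G,c}(Z)$ under multiplication by $b$: if $\alpha:\g\to\mathcal A_c(N)$ is a representative, i.e. $\alpha(x)$ has compact support in $N$ (uniformly, say), depends smoothly on $x$ with moderate growth, and $D\alpha=0$ near $Z$, then $b(x)\alpha(x)$ visibly has the same compact support, still depends smoothly on $x$, has moderate growth as a product of two moderate-growth objects, and satisfies $D(b(x)\alpha(x)) = b(x)D\alpha(x) = 0$ near $Z$ because $b(x)$ is a scalar (so $D(b\alpha) = db\wedge\alpha + b\,d\alpha - \iota_x(b\alpha)$, but $b$ is a function on $\g$ not on $N$, hence $db=0$ on $N$ and $\iota_x$ sees only the form part). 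So $b\alpha$ is again a legitimate representative.

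For the convolution formula \eqref{eq:convol}, I would plug $b\alpha$ into \eqref{definf}. For a test function $f$ on $\g^*$ with compact support,
\begin{equation*}
\langle \infdex^\mu_G(b\alpha), f\rangle = \lim_{s\to\infty}\int_N\int_\g e^{is\Omega(x)}\,b(x)\,\alpha(x)\,\hat f(x)\,dx.
\end{equation*}
The idea is that multiplying the integrand by $b(x)$ is, on the Fourier side, convolution by $m$. Concretely, write $b(x)\hat f(x)$ as the Fourier transform of $m * f$: since $b = \mathcal F(m)$ and $\hat f = \mathcal F^{-1}$-type transform of $f$ (with the normalization fixed in the text so that $\hat f(x)\,dx$ is canonical), one has $b(x)\hat f(x) = \widehat{(m*f)}(x)$, using that $m$ is compactly supported so the convolution $m*f$ is a well-defined compactly supported smooth function. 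Substituting this identity into the integral gives exactly $\langle \infdex^\mu_G(\alpha), m*f\rangle$, which by definition of convolution of a distribution with a distribution equals $\langle m*\infdex^\mu_G(\alpha), f\rangle$. Since this holds for all test $f$, we get \eqref{eq:convol}.

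The main obstacle I anticipate is bookkeeping around the Fourier normalization and the legitimacy of the manipulation $b(x)\hat f(x) = \widehat{m*f}(x)$ when $m$ is only a distribution: one must check that $m*f$ is smooth and compactly supported (true since $f\in C_c^\infty$ and $m\in\mathcal E'$), that its Fourier transform is again of the right class to pair against $\infdex_G^\mu(\alpha)$, and that the limit in $s$ commutes with everything — but this last point is automatic because, as recalled in the text, the $s$-integral stabilizes for $s$ large regardless of which admissible test function multiplies $\alpha$, and $m*f$ is such a test function. I would also remark (matching the style of Remark~\ref{globalcompact}) that when $\alpha$ is globally closed with compact support the formula reduces to the elementary statement that Fourier transform turns multiplication into convolution, which serves as a sanity check.
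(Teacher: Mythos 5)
Your proposal is correct, and in fact the paper states this proposition without any proof, so your argument supplies exactly the standard justification one would expect: Paley--Wiener--Schwartz for the moderate growth of $b$, the observation that $b$ is a scalar depending only on $x$ (hence commutes with $d$ and $\iota_x$, so $D(b\alpha)=bD\alpha$) for the stability of $\mathcal H^{\infty,m}_{G,c}(Z)$, and the multiplication-to-convolution identity under the Fourier transform for \eqref{eq:convol}. The one piece of bookkeeping you flagged does deserve care: with the paper's conventions, $\hat f(x)=\int e^{-i\ll\xi,x\rr}f(\xi)d\xi$ while $b(x)=\int e^{+i\ll\xi,x\rr}m(\xi)d\xi$, so the correct intermediate identity is $b(x)\hat f(x)=\widehat{(\check{m}*f)}(x)$ with $\check{m}(\xi)=m(-\xi)$, not $\widehat{(m*f)}(x)$; this then pairs as $\langle\infdex_G^{\mu}(\alpha),\check{m}*f\rangle=\langle m*\infdex_G^{\mu}(\alpha),f\rangle$, since the adjoint of convolution by $\check{m}$ on test functions is convolution by $m$ on distributions. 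Your intermediate formula and your final "by definition of convolution" step each carry a reflection that happens to cancel, so the conclusion \eqref{eq:convol} is exactly right; the remaining points (that $m*f\in C_c^\infty$, and that the $s$-stabilization applies verbatim to $b\alpha$ since it is again an admissible representative) are handled correctly.
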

 The character group $\Lambda$ acts on forms with moderate growth by multiplication by $e^{i\langle \lambda,x\rangle}$, for $\lambda\in \Lambda$, inducing an action on cohomology. It also acts on distributions on $\mathfrak g^*$ by translations:
$$t_{\lambda} D(f)=D(t_{-\lambda}f)=D(f(\xi+\lambda)).$$

%\marginpar{I think the sign was not correct}
\begin{proposition}\label{equiinfdex} The map $infdex$ is equivariant with the respect to the previous actions of $\Lambda$.
\end{proposition}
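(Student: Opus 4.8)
The plan is to verify that $\infdex$ intertwines the two $\Lambda$-actions by tracing through the integral definition \eqref{definf}. Fix $\lambda\in\Lambda$ and a class $[\alpha]\in\mathcal H^{\infty,m}_{G,c}(Z)$. The action of $\lambda$ on the cohomology side is $[\alpha]\mapsto [e^{i\langle\lambda,x\rangle}\alpha]$, and we must show
$$\infdex^{\mu}_G(e^{i\langle\lambda,x\rangle}\alpha)=t_{\lambda}\,\infdex^{\mu}_G(\alpha),$$
where $t_\lambda$ acts on distributions on $\g^*$ by $\langle t_\lambda D,f\rangle=\langle D,f(\xi+\lambda)\rangle$. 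The cleanest route is to observe that $e^{i\langle\lambda,x\rangle}$ is exactly the Fourier transform (in our normalization $b(x)=\int_{\g^*}e^{i\langle\xi,x\rangle}m(\xi)\,d\xi$) of the compactly supported distribution $m=\delta_{\lambda}$, the Dirac mass at $\lambda\in\g^*$. Then the preceding Proposition with \eqref{eq:convol} gives
$$\infdex^{\mu}_G(e^{i\langle\lambda,x\rangle}\alpha)=\delta_\lambda*\infdex^{\mu}_G(\alpha),$$
and $\delta_\lambda*D=t_\lambda D$ is the standard identity for convolution with a point mass. This reduces the whole statement to a one-line computation, so the main work is really just confirming that the normalization of the Fourier transform chosen in the paper (where $\hat f(x)\,dx$ is choice-independent) makes $b(x)=e^{i\langle\lambda,x\rangle}$ correspond to $m=\delta_\lambda$ rather than $\delta_{-\lambda}$ or some rescaling.

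Alternatively, if one prefers not to invoke the distributional convolution proposition with a singular $m$, I would argue directly. Pair both sides against a test function $f$ on $\g^*$ with compact support. By definition,
$$\langle\infdex^{\mu}_G(e^{i\langle\lambda,x\rangle}\alpha),f\rangle=\lim_{s\to\infty}\int_N\int_{\g}e^{is\Omega(x)}\alpha(x)\,e^{i\langle\lambda,x\rangle}\hat f(x)\,dx.$$
Now $e^{i\langle\lambda,x\rangle}\hat f(x)$ is, up to the normalization constant absorbed into $\hat f(x)\,dx$, the Fourier transform of the translated function $\xi\mapsto f(\xi-\lambda)$; indeed $\widehat{f(\cdot-\lambda)}(x)=\int e^{-i\langle\xi,x\rangle}f(\xi-\lambda)\,d\xi=e^{-i\langle\lambda,x\rangle}\hat f(x)$, so one must be careful with the sign—$e^{i\langle\lambda,x\rangle}\hat f(x)=\widehat{f(\cdot+\lambda)}(x)$. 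Substituting this into the integral and comparing with the definition of $\infdex^{\mu}_G(\alpha)$ applied to the test function $f(\cdot+\lambda)$ yields
$$\langle\infdex^{\mu}_G(e^{i\langle\lambda,x\rangle}\alpha),f\rangle=\langle\infdex^{\mu}_G(\alpha),f(\cdot+\lambda)\rangle=\langle t_\lambda\infdex^{\mu}_G(\alpha),f\rangle,$$
which is the claim.

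The only genuine subtlety—and the step I would double-check most carefully—is the sign and normalization bookkeeping in the Fourier transform: the paper defines $\hat f(x)=\int_{\g^*}e^{-i\langle\xi,x\rangle}f(\xi)\,d\xi$ with inversion $f(\xi)=\int_{\g}e^{i\langle\xi,x\rangle}\hat f(x)\,dx$, and the definition of the $\Lambda$-action on distributions is $t_\lambda D(f)=D(f(\xi+\lambda))$. One must confirm that multiplication by $e^{i\langle\lambda,x\rangle}$ on the form side pairs with $f\mapsto f(\xi+\lambda)$ and not $f\mapsto f(\xi-\lambda)$; a single sign slip would turn the asserted equivariance into anti-equivariance. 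Everything else (interchanging the limit in $s$ with the finite integrals, the fact that $e^{i\langle\lambda,x\rangle}$ has moderate growth so that $e^{i\langle\lambda,x\rangle}\alpha$ still represents a class in $\mathcal H^{\infty,m}_{G,c}(Z)$, and independence of $s$ for $s$ large) is either already established in the excerpt or immediate. So I expect the proof to be short, with the bookkeeping of conventions being the only place requiring attention.
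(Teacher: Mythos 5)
Your proposal is correct and your second, direct argument is exactly the paper's proof: the authors simply observe that $\widehat{t_{-\lambda}f}=e^{i\langle \lambda,x\rangle}\hat f$ for Schwartz $f$ and conclude from the definition of the infinitesimal index, which is precisely your test-function computation (and your sign bookkeeping matches the paper's conventions $(t_af)(\xi)=f(\xi-a)$ and $t_\lambda D(f)=D(f(\xi+\lambda))$). Your first route via $m=\delta_\lambda$ and Formula \eqref{eq:convol} is a harmless repackaging of the same observation.
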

\begin{proof} The proposition follows from the definition of $infdex$ once we notice that
$$\widehat{t_{-\lambda}f}=e^{i\langle \lambda,x\rangle}\hat f$$
for any function $f$  on $\mathfrak g^*$ lying in the Schwartz space.
\end{proof}

\subsection{Some explicit computations in
 cohomology}\label{coho}

Recall that if $N$ is a vector space provided with an action of $G$,   $H_{G,c}^*(N)$ is a free $S[\g^*]$ module with a generator $\Thom(N)$ with equivariant integral $\int_N\Thom(N)(x)$ identically equal to $1$ (see for example \cite{par-ver2}).

Notice that in particular, by Remark \ref{globalcompact},  if $N$ is a   vector space with a  (any) real one form $\sigma$, then $\Thom(N)$ defines  an element of $H_{G,c}^*(Z)$ with infinitesimal index equal to the $\delta$-function of $\g^*$.
The form $\Thom(N)$ depends of the choice of an orientation of $N$.

Let $Y$ be a sequence of vectors in $\Lambda$ and
  $M:=M_Y$ be the corresponding complex $G$-representation space.
We give to $M$ the orientation given by its complex structure.

We want to describe $\Thom(M_Y)$.
For this it is sufficient to give the formula of the Thom form for $M=L_a$, a complex line.
The formula for $\Thom(M_Y)$ is then obtained by taking the exterior product of the corresponding equivariant differential forms.

If $\alpha(x)$ is an equivariant form on  a $G$-manifold $M$, fixing $m\in M$, we may write
$\alpha[m](x)$ for the element $\alpha(x)_m\in \bigwedge T_m^*M$
defined by the differential form $\alpha(x)$ at the point $m$.

 Let  $L_a=\C$ with coordinate $z$.
 The infinitesimal action of $x\in \g$  is given by $i\ll a,x\rr $.
Choose a function $\chi$  on $\R$ with compact support and identically $1$   near $0$.
Then
\begin{equation}\label{exa:Vdim2}\Thom(L_a)[z](x)= -\frac{1}{2\pi}\Big(\chi(|z|^2) \ll a,x\rr +
  \chi'(|z|^2) i(dz\wedge d\overline z)\Big)\end{equation}
  is the required closed equivariant differential form on $L_a$ with equivariant integral identically equal to $1$.

As in section \ref{Ktheory} we fix an $G$-invariant Hermitian structure on $M_Y$ and take the $G$-invariant real one form
 $\sigma_Y=- 1/2\Im <z,dz>$ with
 moment map  $\mu_Y(z)=\frac{1}{2}\sum_{a\in Y} |z_a|^2 a $  and zero fiber
 $Z_Y$.

We define some elements of $H_{G,c}^*(Z_Y)$ by an analogous procedure to the $K$-theory case. Of course, the restriction of $\Thom(M_Y)$ to $Z_Y$ defines an element in $H_{G,c}^{*}(Z_Y)$.

 Let us construct some further elements of $H_{G,c}^*(Z_Y)$, not coming by restriction to $Z_Y$ of a compactly supported class on $M_Y$.

As before we start with the basic case in which there is  $\phi\in \g$    strictly positive on all the characters $a\in Y$, then  $Z=\{0\}$  and its equivariant cohomology with compact supports is the algebra $S[\g^*]$ generated by $1$.

Do not assume any more that the weights $Y$ generate a pointed cone.
 Let $F$ be a regular face for $Y$.
  Let $A$ be the subsequence of $Y$ where $\phi$ takes positive values
and  $B$ be the subsequence where $\phi$ takes negative values.
 Write $M_Y=M_A\oplus M_B$.
  Then the pull back of   $\Thom(M_A)$  by the projection  $M_Y\to M_A$ is supported near $M_B$. Thus the support of the pull back of  $\Thom(M_A)$ intersected with $Z_Y$ is
  compact, and therefore,  as explained in Remark \ref{rem:almostcompact}, $\Thom(M_A)$ defines a class in $H_{G,c}^*(Z_Y)$. We can write an explicit    representative of this class by choosing a $G$ invariant function $\chi$    on $M_Y$ identically  equal to $1$  in a neighborhood of $Z_Y$ and supported near $Z_Y$. Then our representative will be given by
 $$t^F[z](x):=\chi(z) \Thom(M_A)[z_A](x).$$
\smallskip

Consider the inclusion $i_B:M_B\to M_Y$.
In \cite{dpv4} we  have defined a map $(i_B)_!:H_{G,c}^*(Z_B)\to H_{G,c}^*(Z_Y)$
 preserving the infinitesimal index.
 In this setting we see that the class $t^F\in H^*_{G,c}(Z_Y)$ is  by definition a representative of $(i_B)_!(1)$.

We now compute the infinitesimal index
$\infdex^{\nu}_G([\alpha])$
of the elements $t^F$.

The equivariant Thom form has equivariant integral equal to $1$, so that  by Fourier transform
$$\infdex^{\nu}_G(\Thom(M_Y))= \delta_0$$
                                      where $\delta_0$ is the $\delta$-distribution on $\g^*$.

Consider now the element $t^F$ associated to $F$.
The subsequence $B$  spans a pointed cone.
  We can then define the partial multispline distribution $T_B$ on $\g^*$.

\begin{theorem}\label{musp}  $\infdex^{\nu}_G(t^F)=(2i\pi)^{|B|}T_B$, where
 $T_B$ is the multivariate spline.
\end{theorem}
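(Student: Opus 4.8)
The plan is to reduce the statement to the already-computed case where the moment map has zero fiber equal to $\{0\}$, by exploiting the product structure of $M_Y = M_A \oplus M_B$ together with the compatibility of $\infdex$ with the pushforward $(i_B)_!$. First I would invoke the identification of $t^F$ as a representative of $(i_B)_!(1)$, where $1 \in H^*_{G,c}(Z_B)$ is the class of the Thom form $\Thom(M_B)$ restricted to $Z_B$ (note that on $M_B$ the linear form $-\phi$ is strictly positive on all weights, so $Z_B = \{0\}$ and $H^*_{G,c}(Z_B) = S[\g^*]$ with generator $1$). Since $(i_B)_!$ preserves the infinitesimal index, it suffices to compute $\infdex^{\nu_B}_G$ of the class $1$ on $Z_B$, where $\nu_B$ is the appropriate moment map on $M_B$ — but here the subtlety enters: the moment map used on $Z_Y$ differs by a sign on the $B$-directions from the "naive" one, and this is exactly what produces the multispline $T_B$ rather than a delta function.

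The key computational step is therefore the one-dimensional model. For $M_B = L_{-b}$ (a single weight $b$, with $\phi$ negative on $b$, so the relevant weight for the zero-fiber structure is effectively $-b$ with positive moment component), I would write out the definition \eqref{definf} explicitly: take $\sigma = \sigma_B$, $\Omega(x) = \mu_B(x) + \Omega$, and compute $\lim_{s\to\infty}\int_{L_b}\int_{\g} e^{is\Omega(x)} \Thom(L_b)[z](x)\,\hat f(x)\,dx$. Using the explicit formula \eqref{exa:Vdim2} for the Thom form and carrying out the Gaussian-type integral over $z \in \C$, the $z$-integration produces (in the limit) a Heaviside-type factor supported on the ray $\R^{\geq 0} b$, and the resulting distribution on $\g^*$ is precisely the ray measure $T_{[b]}$ up to the constant $(2i\pi)$. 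Taking exterior products over all $b \in B$ (using that $\Thom$ and hence the whole construction is multiplicative over direct sums, and that convolution of ray measures gives the multivariate spline, $T_B = T_{[b_1]} *_c \cdots *_c T_{[b_k]}$) yields $\infdex^{\nu}_G(t^F) = (2i\pi)^{|B|} T_B$.

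An alternative, perhaps cleaner route avoiding the direct computation: combine the formula $\infdex^{\nu}_G(\Thom(M_Y)) = \delta_0$ with the relation between $\Thom(M_B)$ as a globally compactly supported form and its class in $H^*_{G,c}(Z_B)$, and use equation \eqref{eq:convol} together with the factorization $\Thom(M_Y) = \Thom(M_A) \wedge \Thom(M_B)$. One then needs only to identify $\infdex$ of $\Thom(M_B)$ viewed in $H^*_{G,c}(Z_B)$ with the appropriate sign of the moment map: since $B$ spans a pointed cone (on which $-\phi$ is positive) the zero fiber of $\mu_B$ is $\{0\}$, but the zero fiber of the moment map $\nu$ induced on $Z_Y$ restricted to the $B$-factor uses $-\mu_B$, and $\infdex$ with the opposite moment map on a cone is well known to produce the multispline $T_B$ rather than $\delta_0$ — this is the content of the passage "the change of $\mu$ to $-\mu$ usually changes radically the map $\infdex$".

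I expect the main obstacle to be bookkeeping the signs and orientations correctly: tracking precisely which moment map ($\mu_Y$, $\nu$, $\nu_F$, or their negatives) governs the $B$-directions, and making sure the orientation from the complex structure on $M_Y$ matches the orientation conventions used in defining $\Thom(M_A)$ and $\Thom(M_B)$ separately, so that the constant comes out as exactly $(2i\pi)^{|B|}$ with the right sign. The analytic core — the stationary/limit evaluation of the oscillatory integral $\lim_{s\to\infty}\int e^{is\Omega(x)}\Thom\,\hat f\,dx$ producing a Heaviside factor — is essentially the one-variable computation already implicit in the paper's treatment of $\infdex$, so the real work is organizing the multiplicative reduction and the sign conventions rather than any new hard estimate.
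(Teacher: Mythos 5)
Your reduction skeleton is the paper's: identify $t^F$ with $(i_B)_!(1)$, use that $(i_B)_!$ preserves the infinitesimal index to reduce to $M_A=0$, then reduce to one dimension by multiplicativity and evaluate an oscillatory integral. But the key computational step is wrong, and the error also infects your ``alternative route''. The class $1\in H^*_{G,c}(Z_B)$ is \emph{not} the restriction of $\Thom(M_B)$ to $Z_B$: it is the unit of $H^*_{G,c}(\{0\})=S[\g^*]$, represented on $L_b$ by a degree-zero cutoff function $\chi(\|v\|^2/2)$ which is closed only \emph{near} $Z_B$, not on all of $L_b$. The restricted Thom class is a different element of $S[\g^*]$ (the two differ by the equivariant Euler class of $M_B$), and they have different infinitesimal indices. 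If you integrate $e^{is\Omega(x)}\Thom(L_b)(x)$ as you propose, then since $\Thom(L_b)$ is globally closed with compact support and $e^{is\Omega(x)}=e^{isD\sigma(x)}$ is cohomologous to $1$, the integral equals $\int_{L_b}\Thom(L_b)(x)=1$ for every $s$; by Remark \ref{globalcompact} the infinitesimal index is then exactly $\delta_0$, and no Heaviside factor ever appears. The correct computation replaces $\Thom(L_b)$ by $\chi(\|v\|^2/2)$: the top-degree term $is\,dv_1\wedge dv_2$ of $e^{is\Omega(x)}$ supplies the volume form, and polar coordinates, the substitution $t\mapsto t/s$ and Fourier inversion yield $2i\pi\int_0^\infty f(tb)\,dt$, i.e. $2i\pi T_{[b]}$, the pushforward of the Liouville measure under the moment map.

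Relatedly, your explanation via ``a sign flip of the moment map on the $B$-directions'' is not what is happening in this theorem. The moment map here is simply $\mu_Y(z)=\frac12\sum_{a\in Y}|z_a|^2a$ restricted to $M_B$, whose image is the cone generated by $B$; the multispline $T_B$ appears because the infinitesimal index of the \emph{unit} class is the Duistermaat--Heckman pushforward, not because of any replacement of $\mu$ by $-\mu$ (that issue is relevant to the later statement on $T^*_GM_X$ via Lemma \ref{changmom}, not to Theorem \ref{musp}). Once the identification of $1$ and the integrand are corrected, the rest of your outline --- multiplicativity over $b\in B$ and convolution of ray measures giving $T_B$ --- is exactly the paper's argument.
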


\begin{proof}

 Since the class $t^F\in H^*_{G,c}(Z_Y)$ is  a representative of $(i_B)_!(1)$and $i_!$ preseserves $infdex$, we are reduced to prove our theorem when $M_A=0$. The computation is reduced to the one dimensional case by the product axiom, \cite{dpv4}.  Let us make the computation  in this case, that is when  $M_Y=L_b$.

 The action form $\sigma$ in coordinates $z=v_1+iv_2$ is
$\frac{1}{2} (v_1 dv_2-v_2 dv_1)$, so $D\sigma(x) =dv_1\wedge
dv_2+\frac{1}{2}\ll b,x\rr \|v\|^2$.

Let $\chi(t)$  be a function on $\R$ with compact
support and  identically equal to $1$ in a neighborhood of $t=0$.
Then by definition,
 we get
$$\langle \infdex^{\nu}_G(1),f\rangle =
\lim_{s\to \infty} \int_{\R^2} \int_{-\infty}^\infty\chi(\frac{\|v\|^2}{2}) e^{i s \ll b,x\rr
\frac{\|v\|^2}{2}}e^{i s dv_1 dv_2}\hat f(x) dx $$
$$=\lim_{s\to \infty} is \int_{\R^2} \int_{-\infty}^\infty\chi(\frac{\|v\|^2}{2}) e^{i s \ll b,x\rr
\frac{\|v\|^2}{2}} dv_1 dv_2\hat f(x) dx. $$

We  pass in polar coordinates on $\R^2$, and  take $t=\frac{\|v\|^2}{2}$ as new variable. We obtain
$$\lim_{s\to \infty} 2i\pi s\int_{t=0}^{\infty} \int_{x=-\infty}^\infty\chi(t) e^{i \ll st b,x\rr }\hat f(x) dx dt.$$

Change $t$ in $t/s$, and use the Fourier inversion formula, we obtain

$$\langle \infdex^{\nu}_G(1),f\rangle =2i\pi\lim_{s\to \infty}\int_0^{\infty}\chi(t/s) f(tb) dt.$$

Passing to the limit, as $\chi$ is identically $1$ in a neighborhood of $0$ and $f$ is compactly supported, we obtain our formula
$$\langle \infdex^{\nu}_G(1),f\rangle =2i\pi \int_0^{\infty}  f(tb) dt.$$

\end{proof}

Exactly as in the $K$-theory case, when $Y=X\cup-X$, that is $(T^*M_X)^0=T^*_GM_X$, we can use this construction  to get classes in
$t^F\in H^*_{G,c}(T^*_GM_X)$ and compute their $\infdex$.
Let $F$ be a regular face for $X$. Consider the map $p_F: T^*M_X\to M_X$ defined by  $p_F(z,\xi)=h(\xi)+J_F z$ (cf. Section \ref{Ktheory}). We have:
%\marginpar{I think there is a sign}
\begin{theorem}

Let $\chi$ be a $G$-invariant function on $T^*M_X$ identically equal to $1$ in a neighborhood of $T^*_G M_X$ and supported near $T^*_GM_X$.
Then
$$t^F=\chi\, p_F^*\Thom(M_X)$$
defines a class in $H^*_{G,c}(T^*_GM_X)$
 such that $\infdex^{-\mu}_G(t^F)=(-1)^{|X|} (2i\pi)^{|X|} T_{X}^F$.

\end{theorem}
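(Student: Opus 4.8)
The plan is to reduce the statement to the earlier computation of $\infdex^{\nu}_G$ in Theorem \ref{musp}, using Lemma \ref{changmom} to transport everything onto $M_X\oplus M_X$ equipped with the split moment map. First I would observe that, by Lemma \ref{changmom}, the isomorphism $(z,\xi)\mapsto [h(\xi)+J_Fz,\,h(\xi)-J_Fz]$ identifies $T^*M_X$ with $M_X\oplus M_X$ in such a way that the Liouville moment map $\mu$ becomes $[\tfrac12\nu_F,-\tfrac12\nu_F]$ and $T^*_GM_X$ becomes the zero fiber of that moment map; in particular $-\mu$ becomes $[-\tfrac12\nu_F,\tfrac12\nu_F]$. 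Under this identification the map $p_F(z,\xi)=h(\xi)+J_Fz$ is just the projection onto the first $M_X$ factor, so $p_F^*\Thom(M_X)$ is the pullback of the Thom form of the first factor, supported near the second factor $\{0\}\oplus M_X$. This is exactly the situation of the construction preceding Theorem \ref{musp}, with the ambient space $M_{X\cup-X}$, the positive part $A$ replaced by the first factor $M_X$ (with complex structure $J_F$, hence weights $A\cup -B$) and the negative part $B$ replaced by the second factor (weights $-A\cup B$). Thus $\chi\,p_F^*\Thom(M_X)=t^{F'}$ for the appropriate regular face $F'$, and it defines a class in $H^*_{G,c}(T^*_GM_X)$ as claimed via Remark \ref{rem:almostcompact}.

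Next I would apply Theorem \ref{musp} to compute $\infdex^{\nu}_G(t^{F'})$ for the moment map $\nu=[\tfrac12\nu_F,-\tfrac12\nu_F]$. Since $\infdex$ is unchanged under scaling the moment map by a positive constant, we may drop the factor $\tfrac12$. With this choice the ``negative'' subsequence $B'$ consists of the weights of the second $M_X$ factor with the $J_F$-complex structure, namely $-A\cup B$, and $|B'|=|X|$. Theorem \ref{musp} then gives $\infdex^{\nu}_G(t^{F'})=(2i\pi)^{|X|}T_{B'}=(2i\pi)^{|X|}T_{-A\cup B}$. To finish I need the sign bookkeeping: the statement asks for $\infdex^{-\mu}_G$, i.e. the moment map $[-\tfrac12\nu_F,\tfrac12\nu_F]$, which reverses the roles of the two factors, so now the ``negative'' part is the first factor with weights $A\cup -B$. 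Theorem \ref{musp} then yields $(2i\pi)^{|X|}T_{A\cup -B}=(2i\pi)^{|X|}(-1)^{|B|}T^F_{\{A,-B\}}$ in the notation of the polarized multispline $T_X^F=(-1)^{|B|}T_{A,-B}$; combining, $\infdex^{-\mu}_G(t^F)=(2i\pi)^{|X|}T_X^F$. The extra $(-1)^{|X|}$ in the claimed formula comes from the orientation: we use the orientation of $T^*M_X$ given by $-d\omega$ (Remark \ref{rem:liouville}), whereas $\Thom(M_X)$ is normalized against the complex orientation of $M_X$; passing from the complex structure $J_F$ back to $J$ on the $B$-factors, or equivalently comparing $-d\omega$ with $d\sigma^F$, introduces a sign $(-1)^{|X|}$ (one sign per complex coordinate where the two orientations disagree, and an overall reversal). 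Tracking these orientation signs carefully gives the factor $(-1)^{|X|}(2i\pi)^{|X|}$.

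I expect the main obstacle to be precisely this orientation and sign accounting: there are three separate sources of signs — the choice $-\mu$ versus $\mu$ (which swaps the two $M_X$ factors), the sign $(-1)^{|B|}$ built into the definition of the polarized multispline $T_X^F$, and the discrepancy between the symplectic orientation $-d\omega$ on $T^*M_X$ and the complex orientation used to normalize $\Thom(M_X)$. Each is individually routine, but reconciling them into the single clean factor $(-1)^{|X|}(2i\pi)^{|X|}$ requires care; the cleanest way is to do it first in the one-dimensional case $M_X=L_a$ using the explicit formula \eqref{exa:Vdim2} for $\Thom(L_a)$ together with the explicit computation in the proof of Theorem \ref{musp}, and then invoke the product axiom of \cite{dpv4} to multiply. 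Everything else — that the class is well defined, that $p_F^*\Thom(M_X)$ pulls back correctly, that $\infdex$ is invariant under positive scaling of $\mu$ and under $(i_B)_!$ — is already available from the results quoted above.
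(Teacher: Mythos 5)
Your proposal follows essentially the same route as the paper's proof: transport everything to $M_X\oplus M_X$ via Lemma \ref{changmom}, recognize $\chi\,p_F^*\Thom(M_X)$ as one of the classes of the construction preceding Theorem \ref{musp}, apply that theorem, and attribute the sign to the comparison of orientations on $T^*M_X$ and $M_X\oplus M_X$ --- which is exactly what the paper does, in equally terse form. The one wobble is in your explicit sign chain: under $-\mu$ the first factor carries the effective weights $-A\cup B$ (not $A\cup -B$), so the negative part $B'$ is the second factor with weights $A\cup -B$, and since $T_{B'}=T_{A,-B}=(-1)^{|B|}T_X^F$ the orientation/Thom-normalization comparison must supply the residual $(-1)^{|A|}$ rather than $(-1)^{|X|}$; but as you explicitly defer the reconciliation to the one-dimensional computation plus the product axiom, the argument is sound and lands on the correct constant.
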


\begin{proof}

This is obtained from the preceding calculations.
Indeed the infinitesimal index depends only on the moment map. So, using Lemma \ref {changmom},   we are reduced to the calculation performed in Theorem \ref{musp}. The sign comes from taking in account  the orientations on $T^*M_X$. Indeed in the isomorphism of $T^*M_X$ with $M_X\oplus M_X$, the orientation of $T^*M_X$ is $(-1)^{|X|}$ the orientation given by the complex structure on $M_X\oplus M_X$.

\end{proof}

\section{The equivariant Chern character and the index theorem}
In this section, we compare the equivariant $K$-theory and the equivariant cohomology via the Chern character.

\subsection{Motivations}

Our goal is to compute the multiplicity index of a symbol $\Sigma\in K_G^0(T^*_GM)$  in terms of the infinitesimal index of the equivariant Chern character of  $\Sigma$ for a general $G$-manifold $M$.
We are going  to provide a direct formula at least in the case where $M=M_X$. This construction is motivated  by  taking the Fourier transform of the formula of Berline-Vergne for the equivariant index of a transversally elliptic operator (\cite{BV1}, \cite{BV2}, \cite{par-ver4} where one can also find the various notations and definitions).  We  first recall  this formula  in the simple case of elliptic symbols.

In this case the equivariant  Chern character $\ch(\Sigma)$  of an element $\Sigma\in K_G^0(T^*M)$ is an element in $\mathcal H_{G,c}^{\infty}(T^*M)$ and the index of $\Sigma$ is a regular function on $G$. For $x\in \g$ small enough,
\begin{equation}\label{delocalized}
\index(\Sigma)(e^x)=(2i\pi)^{-\dim M}\int_{\mathbf{T}^*M} \ch_{c}(\Sigma)(x) \hat A(T^*M)(x),
\end{equation}
where  $\hat A(T^*M)(x)$ is the equivariant $\hat A$ genus of $T^*M$,  $ \ch_{c}(\Sigma)(x)$  is the Chern character with compact support and   $\hat A(T^*M)(x)$  is  defined for $x$ small enough.
For any element $g\in G$, similar "descent formulae" are given for
$\index(\Sigma)(ge^x)$
where the integral is over $T^*M^g$, $M^g$ being the fixed point set of the action of $g\in G$ on $M$.

Let $\omega$ be the canonical (Liouville) $1$-form on $T^*M$:
$\omega_{m,\xi}(V)=\ll \xi,p_*V\rr$.
           Let $\Sigma$ be a transversally elliptic symbol.
In   \cite{BV1}, \cite{BV2},  it is  shown that, although $\ch(\Sigma)$ is not compactly supported, the formula

\begin{equation}\label{delocalizedote}
\index(\Sigma)(e^x)=(2i\pi)^{-\dim M}\int_{\mathbf{T}^*M} e^{-iD\omega(x)}\ch(\Sigma)(x) \hat A(T^*M)(x)
\end{equation}
still holds as a generalized function of $x$, for a sufficiently large class of transversally elliptic symbols.
   The factor  $e^{-iD\omega(x)}$ is congruent to $1$ in cohomology, but is crucial in defining a convergent oscillatory integral when $\ch(\Sigma)(x)$ is not compactly supported.

Let us  write  more explicitly Formula
(\ref{delocalizedote}) in the case where $M$ is a manifold such that $T^*M$ is stably equivalent to  $M\times R$, a trivial vector bundle: here $R$ is a real representation space of $G$.
  Consider the sequence $X_R$ of weights of $G$ in $R\otimes \C$.
  Thus  if $a\in X_R$, then $-a\in X_R$.
Consider the function $$j_{R}(x)=\prod_{a\in X_R}\frac{1-e^{-i\ll a,x\rr }}{i\ll a,x\rr }$$ on
$\g$.
Then the equivariant class   $\hat A(T^*M)(x)$  is just the function $j_R(x)^{-1}$ and is defined only for $x$ small enough.
In this "trivial tangent bundle case",  Formula (\ref{delocalizedote}) implies that
$$j_R(x)\index(\Sigma)(e^x)=(2i\pi)^{-\dim M}\int_{\mathbf{T}^*M} e^{-i D\omega(x)}\ch(\Sigma)(x).$$

Recalling Formula (\ref{eq:FB})  and the definition of $\ind_m(\Sigma)$, we obtain
$$\hat B_{X_R}(x)\sum_{\lambda} \ind_m(\Sigma)(\lambda) e^{i\ll \lambda,x\rr }=
(2i\pi)^{-\dim M}\int_{\mathbf{T}^*M}  e^{-iD\omega(x)}\ch(\Sigma)(x).$$

Let $\mu$ be the moment map on $T^*M$ associated to the one form $\omega$.
Thus, by Fourier transform, this equality suggests that   the following  formula should hold:

\begin{equation}\label{eq:boxindex}
B_{X_R}*_d \ind_m(\Sigma)=(2i\pi)^{-\dim M}\infdex_G^{-\mu}( \ch(\Sigma)).
\end{equation}

All the terms of this formula make sense, since we will show in Subsection \ref{chern} that
$\ch(\Sigma)$ belongs to the cohomology  $\mathcal H_{G,c}^{\infty,m}(T^*_GM)$ of classes with compact support on
$T^*_GM$ where the infinitesimal index is defined.

Consider now $M=M_X$, our representation space for $G$. We always will consider $M_X$ as a real $G$-manifold, except if specified differently.
 Let $T^*M_X=M_X\times M_X^*$ is a trivial vector bundle. Here $M_X^*$ is thus considered as the real vector space dual to $M_X$.
  The sequence of weights of $G$ in $M^*_X\otimes \C$
is the sequence $ X\cup (-X).$ Note that the real dimension of $M_X$ is $2|X|$.
Our aim in the next subsections is to  see   that Formula (\ref{eq:boxindex})  indeed  holds for any   $\Sigma\in K_G^0(T^*_G M_X)$.

Finally using "descent formulae" for $g$ running over the finite set of toric vertices of the system $X$, we will use Theorem \ref{theo:main} to give a formula for the multiplicity function $\ind_m(\Sigma)$ on $\Lambda$.

Let us point out  that we could use the formulae of \cite{BV1},\cite {BV2} or \cite{par-ver4}. However we found instructive to prove Formula (\ref{eq:boxindex}) directly in the case of $M_X$ is a vector space  using the explicit description of the generators of $K_G^0(T^*_GM_X)$ given in\cite{dpv2}.
 Using the functoriality principle, we hope that  it will be  possible to describe directly  $\ind_m(\Sigma)$, a function on $\Lambda $, in function of $\infdex^{\mu}_G(\ch(\Sigma))$, a function on $\g^*$, for any transversally elliptic operator on a general $G$-manifold $M$.

\subsection{The equivariant Chern character}\label{chern}

We  recall the construction of $\ch(\Sigma)$ for $\Sigma$   a  morphism of vector bundles on   a general $G$-manifold $N$.
We refer to \cite{par-ver3} for the comparison between the  different constructions of the Chern character.

Let $\mathcal E$  be  a $G$-equivariant complex vector bundle on $N$.  We choose a $G$-invariant Hermitian structure and  a
$G$-invariant Hermitian connection $\nabla$ on $\mathcal E$. For any $x\in \mathfrak g$, let
 $\CL_x$ be the action of $x$ on the space $\Gamma(N,\bigwedge T^*N\otimes \CE)$ of  $\CE$ valued forms on $N$.
The operator $j(x):=\CL_x -\nabla_x$ is a bundle map called the {\em moment  }  of the connection $\nabla$.
At each point $n\in N$, $j(x)$   is an anti-hermitian endomorphisms of ${\mathcal E}_n$.
Let $F$ be the curvature of the connection $\nabla$, thus $F$ is a two-form on $N$  with values in  the bundle of  anti-hermitian linear operators on $\CE$.

The equivariant curvature of $\mathcal E$  at the point $n$ is by definition $j(x)+F$.
Then the equivariant Chern   character $\ch({\mathcal E}, \nabla)$ is the equivariant differential form
$$\ch({\mathcal E}, \nabla)(x)=\Tr( e^{j(x)+F}).$$ This is a closed equivariant differential form on $N$ with $C^{\infty}$ coefficients (see \cite{ber-get-ver}, chapter 7).

\begin{lemma}
Over any compact subset   of $N$,
the  Fourier transform of the equivariant Chern character
$x\to \ch({\mathcal E}, \nabla)(x)$  is  a compactly supported distribution on $\g^*$.
\end{lemma}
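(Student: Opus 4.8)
The plan is to combine Duhamel's expansion of the exponential with the Paley--Wiener--Schwartz theorem, the point being that the $x$-dependence of $\ch(\mathcal E,\nabla)(x)=\Tr\bigl(e^{j(x)+F}\bigr)$ sits entirely in the anti-hermitian piece $j(x)$, while the curvature piece $F$ is a fixed nilpotent form. Fix a compact set $K\subset N$. Choosing a Riemannian metric on $N$ together with the given Hermitian structure on $\mathcal E$, we obtain for each $n\in N$ a submultiplicative norm $\|\cdot\|$ on the finite-dimensional algebra $\bigwedge T_n^*N\otimes\End(\mathcal E_n)$, restricting on the scalar part $\End(\mathcal E_n)$ to the operator norm, with all constants below uniform over $n\in K$. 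Recall that $x\mapsto j(x)_n=\mathcal L_x-\nabla_{v_x}$ is $\R$-linear with $j(x)_n$ anti-hermitian, and that $F_n$ is a fixed element of degree $2$, hence nilpotent. By compactness set $C_K:=\sup_{n\in K}\|F_n\|<\infty$ and $R_K:=\sup_{n\in K}\|j(\cdot)_n\|<\infty$, the latter being the norm of the linear map $\g\to\End(\mathcal E_n)$.

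The first step is the elementary estimate $\|e^{s\,j(z)_n}\|\le e^{sR_K|\Im z|}$ for $z\in\g\otimes\C$ and $s\ge 0$. Indeed, extending $j$ complex-linearly one has $\tfrac12\bigl(j(z)_n+j(z)_n^{*}\bigr)=i\,j(\Im z)_n$, which is hermitian of operator norm $\le R_K|\Im z|$, so $e^{s\,j(z)_n}$ has operator norm $\le e^{sR_K|\Im z|}$; for real $x$ this gives $\|e^{s\,j(x)_n}\|\le 1$. Next, iterating Duhamel's formula (the sum being finite since $F_n$ is nilpotent) yields
\[
e^{\,j(z)_n+F_n}=\sum_{k\ge 0}\int_{\Delta_k}e^{s_0 j(z)_n}\,F_n\,e^{s_1 j(z)_n}\cdots F_n\,e^{s_k j(z)_n}\,ds ,
\]
with $\Delta_k$ the standard $k$-simplex of volume $1/k!$ on which $\sum_i s_i=1$. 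Submultiplicativity and the previous estimate bound the $k$-th term by $\tfrac{1}{k!}C_K^{\,k}e^{R_K|\Im z|}$, so $\bigl\|e^{\,j(z)_n+F_n}\bigr\|\le e^{C_K}e^{R_K|\Im z|}$ uniformly in $n\in K$. Applying $\Tr$ costs only a dimensional constant, so $z\mapsto\ch(\mathcal E,\nabla)(z)_n$ extends to an entire $\bigwedge T_n^*N$-valued function satisfying $\|\ch(\mathcal E,\nabla)(z)_n\|\le C_K'\,e^{R_K|\Im z|}$ for all $z$ and all $n\in K$.

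The final step is Paley--Wiener--Schwartz. For a test form $\beta$ on $N$ with $\mathrm{supp}\,\beta\subset K$, the function $g(x):=\int_N\ch(\mathcal E,\nabla)(x)\wedge\beta$ is bounded on $\g$ by the real-$x$ case of the above bound, hence a tempered distribution on $\g$ whose Fourier transform $\widehat g$ is a tempered distribution on $\g^{*}$; moreover $g$ extends holomorphically with $|g(\zeta)|\le C''\,e^{R_K|\Im\zeta|}$. By Paley--Wiener--Schwartz, $\widehat g$ is supported in the closed ball of radius $R_K$ in $\g^{*}$. Since $R_K$ depends only on the suprema over $K$ of the curvature and of the connection moment, this shows that, over $K$, the Fourier transform of the equivariant Chern character is a compactly supported distribution on $\g^{*}$, with support bound uniform in $K$.

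The only genuine obstacle is the need to avoid the crude bound $\|e^{\,j(x)_n+F_n}\|\le e^{\|j(x)_n+F_n\|}$, which grows exponentially in $|x|$ for real $x$ and gives nothing. Separating $j(x)$ from $F$ by Duhamel's formula and exploiting the anti-hermiticity of $j(x)$ is exactly what converts this into a uniform bound for real $x$ and an exponential-type bound of controlled rate $R_K$ for complex $x$, which is precisely the input Paley--Wiener--Schwartz requires.
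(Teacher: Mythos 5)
Your proof is correct, but it takes a genuinely different route from the paper's. The paper proves the lemma via Nelson's formula (established in its appendix by the Berline--Vergne localization theorem): it writes $\Tr(e^{j(x)+F})=\int_{P(E)}e^{i\ll j^*\mu^P(p),x\rr}D(p,n)$ and then reads off the Fourier transform explicitly as the pushforward of the form $D(p,n)$ under the map $j^*\mu^P:P(E)\to\g^*$, a distribution supported on the compact image $j^*\mu^P(P(E))$. You instead argue softly: the Duhamel/Volterra expansion (finite because $F$ is nilpotent of positive form degree) isolates the $x$-dependence in factors $e^{s_i j(z)}$, whose anti-hermiticity for real $x$ gives uniform boundedness on $\g$ and exponential type $R_K|\Im z|$ on $\g\otimes\C$, and Paley--Wiener--Schwartz then places the Fourier transform in the ball of radius $R_K$. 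Both arguments are complete; the relevant estimates ($\|e^{sA}\|\le e^{s\|\frac12(A+A^*)\|}$, the $1/k!$ simplex volumes, uniformity of $C_K,R_K$ over the compact set) are all in order, and your pairing with test forms supported in $K$ is an acceptable substitute for the paper's pointwise formulation with uniform estimates. What the paper's route buys is an explicit, constructive description of the distribution $\gamma(\xi)$ on $\g^*$ (as a fiber integral over projective space) and a sharper support set, namely the image under $j^*$ of the numerical range of the moment, at the price of invoking Nelson's operator calculus. Your route is more elementary and self-contained --- it bypasses the appendix entirely --- and still yields a uniform, if slightly larger, support bound (the ball of radius $\|j\|$, which contains the paper's support set), which is all the lemma and its later use (moderate growth of $\ch(\Sigma)(x)$ and applicability of the infinitesimal index) require.
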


\begin{proof}
Let us fix $n\in N$. Set $E=\mathcal E_n$ the fiber of the vector bundle $\mathcal E$ at $n$ and $A=\bigwedge^{2*} T_n^*N $ the even part of the exterior of the cotangent space at $n$. Then  $(j(x)+F)(n)\in A\otimes \mathfrak u$ where $\mathfrak u$ is the Lie algebra of antihermitian linear operators on $E$.
The map $x\to j(x)$ defines a map $\g\to \mathfrak u$, with dual map $j^*: \mathfrak u^*\to \g^*.$

If  $P(E)$ denotes the projective space of $E$, we define
 $\mu^P: P(E)\to \mathfrak u^*$ by
 $$\mu^P(p)(iX)=\frac{\ll X v,v\rr}{\ll v,v\rr }.$$
Here $p$ is the point of $P(E)$ associated to $v\in  E-\{0\}$ and $iX\in \mathfrak u$.

By Corollary \ref{finalg} to Nelson theorem (for completeness, in Theorem \ref{nelthe}  we give a proof of this fact    based on localization formula in equivariant cohomology),

$$  \Tr(e^{(j(x)+F)})=\int_{P(E)} e^{i \ll j^*\mu^P(p), x\rr} D(p,n)$$
where $D(p,n)=e^{\mu^P(p)(F)}\Tr(\beta(p,u))$ is  a differential form on $P(E)$ with values in $\bigwedge T_n^*N$
(if $F=\sum_k F_k u_k$ with $F_k\in \bigwedge^2 T_n^*N$ and $u_k\in \mathfrak u$,   $e^{\mu^P(p)(F)}= e^{\sum _k F_k \mu^P(p)(u_k)}$ is a smooth function on $P(E)$ with values in $\bigwedge T^*_nN$).

Integrating  over the fiber of the map $j^*\mu^P: P(E)\to \g^*$
we obtain

   $$\Tr(e^{ j(x)+F})=\int_{\g^*} e^{i\ll x,\xi\rr }  \gamma(\xi)$$
   where $\gamma(\xi)$  is a distribution supported on the compact set $j^*\mu^P (P(E))$.
   Thus we see thus that at each point $n$ of $N$, the function  $x\to \Tr(e^{j(x)+F})$ is the Fourier transform  of a compactly supported distribution on $\g^*$ (with values in $\bigwedge T^*_n N$). It is clear that our estimates are uniform on any compact neighborhood of the point $n\in N$. Thus we obtain  our lemma.

   \end{proof}

In particular the closed equivariant differential form $\ch(\Sigma)(x)$ has moderate growth with respect to $x\in \g$ over any compact subset of $N$.

\bigskip

Let ${\mathcal E}={\mathcal E}^+\oplus {\mathcal E}^-$ be  a Hermitian $G$-equivariant super-vector bundle over $N$.
Let ${\Sigma}:{\mathcal E}^+\to {\mathcal E}^-$ be a $G$-equivariant  morphism.
Outside the support of ${\Sigma}$, the complex vector bundles ${\mathcal E}^+$ and
${\mathcal E}^-$ are ``the same", so that it is natural to construct
representatives of $\ch({\mathcal E}):=\ch({\mathcal E}^+)-\ch({\mathcal E}^-)$ which are
zero ``outside'' the support of ${\Sigma}$ by the following
identifications of bundle with connections.

Let $U$ be a neighborhood of the support of  ${\Sigma}$.
We first may choose the Hermitian structures on $\CE^+,\CE^-$ so that $\Sigma$  is an isomorphism of hermitian vector bundles outside $U$.

A pair of connections $\nabla^+, \nabla^-$ is said ``adapted'' to
the morphism ${\Sigma}$ on $U$ when the following holds
\begin{equation}\label{eq:adapted}
    \nabla^- \circ {\Sigma} =  {\Sigma} \circ \nabla^+
\end{equation}
outside the  neighborhood $U$ of the support of ${\Sigma}$. A pair
of adapted
connections   is
easy to construct.

\begin{proposition}

    Let $\nabla^+, \nabla^-$ be a pair of  $G$-invariant Hermitian connections {\em adapted} to
    ${\Sigma}:{\mathcal E}^+\to {\mathcal E}^-$.
Then
    the differential form
    $\ch({\mathcal E}^+,\nabla^+)-\ch({\mathcal E}^-,\nabla^-)$ is a closed equivariant differential form on $M$
    supported near the support of ${\Sigma}$.
We note it as $\ch_s({\Sigma}).$

 \end{proposition}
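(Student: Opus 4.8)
The plan is to verify the two assertions of the proposition separately. For the closedness statement, I would start from the standard fact that for any $G$-invariant Hermitian connection $\nabla$ on a $G$-equivariant bundle $\mathcal{E}$, the equivariant Chern character $\ch(\mathcal{E},\nabla)(x)=\Tr(e^{j(x)+F})$ is an equivariantly closed form, i.e. $D\ch(\mathcal{E},\nabla)=0$ (this is the content of the cited \cite{ber-get-ver}, Chapter 7). Since $D$ is linear, the difference $\ch(\mathcal{E}^+,\nabla^+)-\ch(\mathcal{E}^-,\nabla^-)$ is automatically $D$-closed on all of $M$; this part requires no use of the adaptedness hypothesis at all.

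The real content is the support statement: that $\ch(\mathcal{E}^+,\nabla^+)-\ch(\mathcal{E}^-,\nabla^-)$ vanishes outside the neighborhood $U$ of the support of $\Sigma$. Here I would argue as follows. Outside $U$, the morphism $\Sigma:\mathcal{E}^+\to\mathcal{E}^-$ is an isomorphism of Hermitian bundles (by the choice of Hermitian structures), and the adaptedness relation \eqref{eq:adapted}, namely $\nabla^-\circ\Sigma=\Sigma\circ\nabla^+$, says precisely that $\Sigma$ intertwines the two connections there. Consequently $\Sigma$ also intertwines the curvatures, $F^-\circ\Sigma=\Sigma\circ F^+$, and likewise the moments, $j^-(x)\circ\Sigma=\Sigma\circ j^+(x)$ for all $x\in\mathfrak g$ (the moment is $\mathcal{L}_x-\nabla_x$, and both $\mathcal{L}_x$ and $\nabla_x$ are intertwined by the $G$-equivariant, connection-adapted $\Sigma$). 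Therefore the equivariant curvatures $j^\pm(x)+F^\pm$ are conjugate via $\Sigma$, so $e^{j^+(x)+F^+}$ and $e^{j^-(x)+F^-}$ are conjugate as $\mathrm{End}(\mathcal{E}^\pm)$-valued forms, and taking supertrace (fiberwise trace) gives $\Tr(e^{j^+(x)+F^+})=\Tr(e^{j^-(x)+F^-})$ pointwise outside $U$. Hence the difference vanishes outside $U$, so $\ch_s(\Sigma):=\ch(\mathcal{E}^+,\nabla^+)-\ch(\mathcal{E}^-,\nabla^-)$ is supported near the support of $\Sigma$, as claimed. Equivariance under $G$ of the whole construction is immediate since all ingredients ($\nabla^\pm$, the Hermitian structures, $\Sigma$) were chosen $G$-invariant.

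The only genuinely delicate point — and the one I would spell out most carefully — is the bookkeeping of conjugation in the presence of the form-degree part: $F^\pm$ are $2$-forms with values in anti-Hermitian endomorphisms, so "conjugate via $\Sigma$" must be understood in the algebra $\mathcal{A}(M)\otimes\mathrm{End}(\mathcal{E})$, and one uses that trace is invariant under conjugation in this graded-commutative-coefficient matrix algebra. Once one checks that $\Sigma$ (a degree-zero bundle map) conjugates $j^+(x)+F^+$ to $j^-(x)+F^-$ in this algebra — which follows by applying $\Sigma(\cdot)\Sigma^{-1}$ to each homogeneous piece and invoking \eqref{eq:adapted} together with its differential consequence for the curvature — the exponential and the trace pass through formally. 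I do not expect any serious obstacle beyond this; the statement is essentially a routine consequence of the adaptedness condition, and the proof is short.
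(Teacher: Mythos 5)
Your proof is correct and is exactly the standard argument the paper has in mind (the paper states this proposition without proof): closedness of each Chern character separately, plus the observation that outside $U$ the adapted relation $\nabla^-\circ\Sigma=\Sigma\circ\nabla^+$ forces $\Sigma$ to conjugate $j^+(x)+F^+$ into $j^-(x)+F^-$, so the two traces of exponentials coincide there. The bookkeeping you flag is harmless since the equivariant curvature is an even-degree element of $\mathcal A(M)\otimes\End(\mathcal E)$, so conjugation by the degree-zero map $\Sigma$ commutes with the exponential and the fiberwise trace is conjugation-invariant.
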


The index $s$ means with support condition.

In particular if  the  support  of $\Sigma$ is compact, we can also choose the neighborhood $U$ so that its closure is compact. We deduce that the cohomology class of
$\ch_s({\Sigma})$ lies in  $\mathcal H^{\infty,m}_{G,c}(N)$ is a compactly supported class on $N$  with moderate growth in $x$ and is denoted simply by $\ch(\Sigma)$.

\bigskip

 Let $g\in G$ and let $N^g$ be the fixed point submanifold of $g$. Then $g$ acts by a fiberwise transformation on $\mathcal E\to N$ still denoted $g$.
 We still denote by $F$ the curvature of the bundle $\CE$ restricted to $N^g$.
The equivariant twisted Chern character $\ch^g({\mathcal E}, \nabla)$ is the equivariant differential form
$$\ch^g({\mathcal E}, \nabla)(x)=\Tr(g e^{(j(x)+F)}).$$ This is a closed equivariant differential form on $N^g$.
Similarly we have the following proposition.
\begin{proposition}\
    Let $(\nabla^+,\nabla^-)$ be a pair of  $G$-invariant Hermitian connections {\em adapted} to
    ${\Sigma}:{\mathcal E}^+\to {\mathcal E}^-$. Then the differential form
    $$\ch^g({\Sigma})(x)= \ch^g({\mathcal E^+}, \nabla^+)(x)-
    \ch^g({\mathcal E^-}, \nabla^-)(x)$$
    is a closed $G$-equivariant form on $N^g$ supported near the support of $\Sigma|_{N^g}$.

Over any compact subset of $N^g$, it has moderate growth with respect to $x\in \g$.
  \end{proposition}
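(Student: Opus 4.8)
The plan is to follow, essentially verbatim, the argument already used for the untwisted form $\ch_s(\Sigma)$, now inserting the operator $g$ and restricting all the data to the fixed submanifold $N^g$ (on which the torus $G$ still acts, since $G$ is abelian). Closedness is immediate: for a single $G$-invariant Hermitian connection $\nabla^{\pm}$ with moment $j^{\pm}(x)$ and curvature $F^{\pm}$, the form $\ch^g(\mathcal{E}^{\pm},\nabla^{\pm})(x)=\Tr\bigl(g\,e^{j^{\pm}(x)+F^{\pm}}\bigr)$ is a closed equivariant form on $N^g$ by the standard equivariant Chern--Weil/Bismut computation (cf.\ \cite{ber-get-ver}, Ch.~7); here one uses that $g$, lying in the abelian group $G$, commutes with the $G$-action and hence with each $j^{\pm}(x)$. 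Subtracting the two contributions shows $\ch^g(\Sigma)(x)$ is a closed equivariant form on $N^g$.

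Next I would establish the support statement, which is the core of the argument. Off the neighbourhood $U$ of $\mathrm{supp}(\Sigma)$ we have arranged that $\Sigma$ is an isometry of Hermitian bundles and, by the adaptedness relation \eqref{eq:adapted}, that $\nabla^{-}\circ\Sigma=\Sigma\circ\nabla^{+}$; since $\Sigma$ is $G$-equivariant it moreover commutes with $g$. Restricting to $N^g$, the bundle map $\Sigma$ therefore conjugates $(\mathcal{E}^{+},\nabla^{+},j^{+},g)$ onto $(\mathcal{E}^{-},\nabla^{-},j^{-},g)$ over $N^g\setminus U$; in particular it intertwines the equivariant curvatures $j^{+}(x)+F^{+}$ and $j^{-}(x)+F^{-}$ while commuting with $g$, so $\Tr\bigl(g\,e^{j^{+}(x)+F^{+}}\bigr)=\Tr\bigl(g\,e^{j^{-}(x)+F^{-}}\bigr)$ pointwise on $N^g\setminus U$. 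Hence $\ch^g(\Sigma)$ vanishes on $N^g\setminus U$, i.e.\ it is supported in $U\cap N^g$, a neighbourhood of $\mathrm{supp}(\Sigma)\cap N^g=\mathrm{supp}(\Sigma|_{N^g})$.

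Finally I would treat moderate growth, fixing $n\in N^g$ and setting $E=\mathcal{E}_n$. Because $G$ is a torus, $g$ commutes with $j(x)$ for every $x\in\g$, and because $n$ is a fixed point and $\nabla$ is $G$-invariant, $g$ also commutes with the pullback of the curvature $F$ to $N^g$. Decomposing $E=\bigoplus_\zeta E[\zeta]$ into $g$-eigenspaces, each $E[\zeta]$ is preserved by $j(x)+F$ and
$$\Tr\bigl(g\,e^{j(x)+F}\bigr)=\sum_\zeta \zeta\,\Tr_{E[\zeta]}\bigl(e^{j(x)+F}\bigr),$$
so applying to each summand the argument of the preceding (untwisted) Lemma --- via Corollary \ref{finalg} to Nelson's theorem (Theorem \ref{nelthe}), writing the trace as an integral over a projective space --- shows each $\Tr_{E[\zeta]}\bigl(e^{j(x)+F}\bigr)$ is the Fourier transform in $x$ of a distribution on $\g^*$ supported in a fixed compact set, with estimates uniform on a compact neighbourhood of $n$ in $N^g$. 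This gives moderate growth of $x\mapsto\ch^g(\Sigma)(x)$ uniformly on compact subsets of $N^g$, so that together with the previous steps $\ch^g(\Sigma)$ determines a class in $\mathcal H^{\infty,m}_{G,c}(N^g)$ with the asserted support.

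The step I expect to be the main obstacle is the support claim: one must check carefully that ``adapted'' together with the $G$-equivariance of $\Sigma$ really forces the \emph{pointwise} equality of the two twisted Chern forms off $U$ --- this is exactly where the compatible choice of Hermitian structures and the commutation of $\Sigma$ with $g$ on $N^g$ enter. Closedness and moderate growth are then routine, once the untwisted case and Nelson's theorem are available.
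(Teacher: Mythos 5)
Your proposal is correct and is exactly the argument the paper intends: the paper itself gives no separate proof here, simply asserting the result "similarly" to the untwisted case, and your three steps (Chern--Weil closedness, the adapted-connections intertwining argument off $U$ with $g$ inserted into the trace, and the $g$-eigenbundle decomposition reducing moderate growth to the Nelson-formula lemma) are the natural and correct way to fill that in.
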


If we change the choice of connections, we can see using the usual transgression formulae for Chern characters (see for example \cite{par-ver3}) that the class  $\ch^g(\Sigma)$ stays the same in the cohomology with moderate growth: that is, the boundary $\nu(x)$ expressing the change of  $\ch^g(\Sigma)$ with respect to the connection remains with moderate growth with respect to $x$, over any compact subset of $N^g$.

We return to the situation where $N=T^*M$ is the conormal bundle to a $G$-manifold $M$ and  $\Sigma$ is a transversally elliptic symbol. Let $\chi$ be a function identically equal to $1$  near the set $T^*_GM$ and supported in a neighborhood of $T^*_GM$ whose closure has compact intersection with the support of $\Sigma$. We have

\begin{proposition}

 The equivariant  form $\alpha^g(x)=\chi \ch^g({\Sigma})(x)$ on $T^*M^g$
is compactly supported and $D\alpha^g(x)$ is equal to $0$  in a neighborhood of $T^*_GM^g$. Over any compact subset of $T^*_G M^g$, it has moderate growth with respect to $x\in \g$.

\end{proposition}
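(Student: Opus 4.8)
The plan is to obtain all three assertions directly from the preceding proposition, which already tells us that $\ch^g(\Sigma)(x)$ is a closed $G$-equivariant form on $N^g=T^*M^g$, supported near the support of $\Sigma|_{N^g}$, and of moderate growth in $x$ over every compact subset of $N^g$. The multiplication by $\chi$ only serves to make the support compact without disturbing a neighborhood of $T^*_GM^g$, so essentially nothing new has to be proved; one just has to assemble these facts carefully.

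For the compact support I would first pin down the open neighborhood $U$ of the support of $\Sigma$ that enters the construction of the adapted connections $(\nabla^+,\nabla^-)$. Since $\mathrm{supp}\,\chi$ meets $\mathrm{supp}\,\Sigma$ in a compact set $K$, pick a relatively compact open $V\supseteq K$; then $\mathrm{supp}\,\Sigma\setminus V$ is a closed subset of the open set $N\setminus\mathrm{supp}\,\chi$, so, the manifold being normal, there is an open $U'$ with $\mathrm{supp}\,\Sigma\setminus V\subseteq U'\subseteq\overline{U'}\subseteq N\setminus\mathrm{supp}\,\chi$, and $U:=V\cup U'$ is a neighborhood of $\mathrm{supp}\,\Sigma$ with $\overline U\cap\mathrm{supp}\,\chi=\overline V\cap\mathrm{supp}\,\chi$ compact. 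With this choice $\ch^g(\Sigma)$ vanishes on $N^g\setminus U$ — there the adapted, unitary, $G$-equivariant isomorphism $\Sigma$ carries $(\CE^+,\nabla^+)$ to $(\CE^-,\nabla^-)$ together with their curvatures, moments, and the $g$-action, so the two twisted Chern characters coincide — hence it is supported in $\overline U\cap N^g$. Therefore $\alpha^g=\chi\,\ch^g(\Sigma)$ is supported in $\overline U\cap\mathrm{supp}\,\chi\cap N^g$, a compact set.

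For the closedness near $T^*_GM^g$ I would use the Leibniz rule for the equivariant differential $D\beta(x)=d\beta(x)-\iota_x\beta(x)$: since $\chi$ is a $G$-invariant function, $D\alpha^g(x)=d\chi\wedge\ch^g(\Sigma)(x)+\chi\,D\ch^g(\Sigma)(x)=d\chi\wedge\ch^g(\Sigma)(x)$, because $\ch^g(\Sigma)$ is equivariantly closed on all of $N^g$. As $\chi\equiv1$ in a neighborhood of $T^*_GM$ in $N$, its restriction to $N^g$ is $\equiv1$ near $T^*_GM^g=T^*_GM\cap N^g$, so $d\chi=0$ there and $D\alpha^g=0$ in that neighborhood. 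The moderate growth is then immediate: $\chi$ does not depend on $x$, so the $x$-growth of $\alpha^g(x)$ is controlled by that of $\ch^g(\Sigma)(x)$, which is moderate over compact subsets of $N^g$ by the preceding proposition; together with the compact support of $\alpha^g$ this gives moderate growth over all of $N^g$, a fortiori over every compact subset of $T^*_GM^g$.

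The only point that is not purely formal is the compact support statement, i.e.\ reconciling the support of the globally defined form $\ch^g(\Sigma)$ — which is compactly supported only near $\mathrm{supp}\,\Sigma$, not on $N^g$ — with the cut-off $\chi$. This is exactly what the hypothesis that $\overline{\mathrm{supp}\,\chi}$ has compact intersection with $\mathrm{supp}\,\Sigma$ is designed to allow, exploited through the freedom to choose the adapted neighborhood $U$ tightly around $\mathrm{supp}\,\Sigma$; everything else is a routine consequence of the Leibniz rule and of the properties of $\ch^g(\Sigma)$ already recorded.
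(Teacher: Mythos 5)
Your proposal is correct and follows exactly the route the paper intends (the paper states this proposition without proof, as an immediate assembly of the preceding facts about $\ch^g(\Sigma)$): choose the adapted connections relative to a neighborhood $U$ of $\mathrm{supp}\,\Sigma$ whose closure meets $\mathrm{supp}\,\chi$ compactly so that the representative $\ch^g(\Sigma)$ vanishes off $U$, then apply the Leibniz rule $D(\chi\beta)=d\chi\wedge\beta+\chi D\beta$ and the moderate-growth estimate over compact sets. Your careful construction of $U$ via normality is a correct filling-in of a detail the paper leaves implicit.
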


It follows that  the Chern character  gives a morphism

$$\ch:K_G^0(T^*_GM)\to \mathcal H^{\infty,m}_{G,c}(T^*_GM).$$

Similarly for $g\in G$, the twisted Chern character is a morphism

$$\ch^g:K_G^0(T^*_GM)\to \mathcal H^{\infty,m}_{G,c}(T^*_GM^g).$$

As we have seen in Section \ref{indinfe}, the character group $\Lambda$ acts on forms with moderate growth by multiplication by $e^{i\langle \lambda,x\rangle}$, for $\lambda\in \Lambda$, inducing an action on cohomology. Of course $\Lambda$ also     acts on   $G$ equivariant vector bundles by tensoring with $L_a$  and inducing an action in $K$-theory. We have
%\marginpar{I wrote $L_a$ meaning the trivial bungle with fiber $L_a$ maybe this should be said here or before}
\begin{proposition}\label{equichern} For any $g\in G$ the map $ch^g$ is equivariant with the respect to the previous actions of $\Lambda$.
\end{proposition}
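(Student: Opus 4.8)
The plan is to deduce Proposition~\ref{equichern} from the multiplicativity of the (twisted) equivariant Chern character under tensoring with a line bundle, since the $\Lambda$-action on $K_G^0(T^*_GM)$ is precisely $\Sigma\mapsto L_\lambda\otimes\Sigma$. So, fixing $\lambda\in\Lambda$ and a transversally elliptic symbol $\Sigma:\CE^+\to\CE^-$ on $N=T^*M$, I would compute $\ch^g(L_\lambda\otimes\Sigma)$ by making one convenient choice of Hermitian structures and connections, using the fact already recorded in the excerpt that the class of $\ch^g$ in $\mathcal H^{\infty,m}_{G,c}(T^*_GM^g)$ is independent of such choices up to a boundary of moderate growth.

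Concretely, I would first equip the trivial line bundle $L_\lambda=N\times\C$ (with fibrewise $G$-action $g^\lambda$) with the trivial $G$-invariant connection $d$; then its curvature vanishes, its moment is the scalar $j^{L_\lambda}(x)=i\langle\lambda,x\rangle$, and $\ch^g(L_\lambda,d)(x)=g^\lambda e^{i\langle\lambda,x\rangle}$. Starting from a pair $(\nabla^+,\nabla^-)$ of $G$-invariant Hermitian connections adapted to $\Sigma$, I would check that the product connections $d\otimes 1+1\otimes\nabla^\pm$ on $L_\lambda\otimes\CE^\pm$ are again adapted to $L_\lambda\otimes\Sigma$ (outside the common support the relation $\nabla^-\Sigma=\Sigma\nabla^+$ tensors up and the $d\otimes 1$ summands intertwine trivially), hence are an admissible choice. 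The equivariant curvature of $L_\lambda\otimes\CE^\pm$ is then $i\langle\lambda,x\rangle\cdot(1\otimes 1)+1\otimes(j^\pm(x)+F^\pm)$, and $g$ acts on it as $g^\lambda\cdot(1\otimes g|_{\CE^\pm})$; since $i\langle\lambda,x\rangle$ and $g^\lambda$ are scalars and $L_\lambda$ is a line, forming $\Tr(g\,e^{\,\cdot\,})$ pulls out the factor $g^\lambda e^{i\langle\lambda,x\rangle}$, giving $\ch^g(L_\lambda\otimes\CE^\pm,\cdot)(x)=g^\lambda e^{i\langle\lambda,x\rangle}\ch^g(\CE^\pm,\nabla^\pm)(x)$. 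Subtracting the even and odd parts (and using the same cut-off $\chi$ for $\Sigma$ and $L_\lambda\otimes\Sigma$, whose supports coincide) yields the identity of equivariant forms $\ch^g(L_\lambda\otimes\Sigma)(x)=g^\lambda e^{i\langle\lambda,x\rangle}\ch^g(\Sigma)(x)$.

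Finally I would note that $x\mapsto g^\lambda e^{i\langle\lambda,x\rangle}$ has moderate growth, so multiplication by it preserves $\mathcal H^{\infty,m}_{G,c}(T^*_GM^g)$ and the identity descends to cohomology classes; for $g=1$ the scalar $g^\lambda$ is $1$ and one recovers exactly the equivariance with respect to the actions recalled in \S\ref{indinfe}, while in general $\ch^g$ intertwines the tensor action of $\Lambda$ on $K$-theory with the twisted action $\alpha(x)\mapsto g^\lambda e^{i\langle\lambda,x\rangle}\alpha(x)$ on cohomology. I do not anticipate a genuine obstacle: the only delicate points are bookkeeping the equivariant curvature and $g$-action of the trivial line bundle so that no spurious sign or factor creeps in, and checking that support and moderate-growth conditions survive the tensor construction, both of which are routine given the framework of \S\ref{chern}.
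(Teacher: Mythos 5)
Your proof is correct and follows essentially the same route as the paper: tensor with $L_\lambda$ keeping the (trivial) connection on the line factor, so that the equivariant curvature shifts by the scalar $i\langle\lambda,x\rangle$ and the resulting factor pulls out of the trace. Your extra care in tracking the scalar $g^\lambda$ coming from the fiberwise action of $g$ on $L_\lambda$ is a worthwhile precision that the paper's one-line proof glosses over; this twisted factor $g^\lambda e^{i\langle\lambda,x\rangle}$ is exactly what is needed later when the operators $\hat g$ are commuted past translations in the proof of the final index theorem.
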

\begin{proof} Let $\mathcal E$  be  a $G$-equivariant complex vector bundle on $N$.  We choose a $G$-invariant Hermitian structure and  a
$G$-invariant Hermitian connection $\nabla$ with  moment $j(x)$ and curvature.
$F$.

Then for the vector bundle $L_\lambda\otimes \CE$, with endomorphism bundle   canonically isomorphic to the one of  $\CE$, we can take   the same connection $\nabla$.  By the definition of the moment, we see that the equivariant curvature   of $L_\lambda\otimes \mathcal E$  equals $i\ll \lambda,x\rr+j(x)+F$ giving our claim.
 \end{proof}

\subsection{ Explicit computations of the Chern character}

 We consider our $G$-manifold  $M=M_X$.
Choose an Hermitian structure on $M_X$.
Let $\Sigma_z=c(z)$ be the Clifford multiplication acting on the complex vector bundle $\bigwedge M_X$. The support of $\Sigma$ is $\{0\}$ and
$\Sigma$ determines the class $\Bott(M_X)\in K_G^0(M_X)$.

The following  result is well known.
$$\ch(\Bott(M_X))(x)= (2i\pi)^{|X|}\prod_{a\in X} \frac{e^{i\ll a,x\rr }-1}{i\ll a,x\rr }
        \Thom(M_X)(x)$$
in the cohomology group of smooth equivariant differential forms, without moderate growth conditions.

In fact, this equality  holds also in $\mathcal  H_{G,c}^{\infty,m}(M_X)$.

 \begin{proposition}\label{theo:chth}

 We have the equality
 $$\ch(\Bott(M_X))(x)= (2i\pi)^{|X|}\prod_{a\in X} \frac{e^{i\ll a,x\rr }-1}{i\ll a,x\rr }
        \Thom(M_X)(x)$$
in
$\mathcal H_{G,c}^{\infty,m}(M_X)$.

\end{proposition}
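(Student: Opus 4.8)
The plan is to upgrade the classical identity
\[
\ch(\Bott(M_X))(x)= (2i\pi)^{|X|}\prod_{a\in X} \frac{e^{i\ll a,x\rr }-1}{i\ll a,x\rr }\Thom(M_X)(x),
\]
which is known to hold in the cohomology of smooth equivariant forms (without growth conditions), to an equality inside $\mathcal H_{G,c}^{\infty,m}(M_X)$. Since $\Bott(M_X)$ has support $\{0\}$ and the listed representative of $\Thom(M_X)$ (see Equation \eqref{exa:Vdim2} and the product axiom) is genuinely compactly supported, both sides are already compactly supported closed equivariant forms; what must be checked is that (i) both sides lie in the moderate-growth complex, and (ii) the transgression form realizing their equality can be chosen with moderate growth in $x$ over compact subsets of $M_X$. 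For (i): the right-hand side is a product of the explicit polynomial-in-$x$ Thom form with the entire function $\prod_a\frac{e^{i\ll a,x\rr}-1}{i\ll a,x\rr}$, which is bounded on $\g$, so it has moderate growth trivially; the left-hand side has moderate growth by the Lemma just proved (the Fourier transform of $x\mapsto\ch(\mathcal E,\nabla)(x)$ is a compactly supported distribution over any compact set of $N$), applied to $\CE^{\pm}=M_X\times E^{\pm}$ with adapted connections as in the preceding subsection.

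First I would reduce to the one-dimensional case $M_X=L_a$ by the product axiom for the Thom form and for the Chern character of an external tensor product: $\Bott(M_Y)=\Bott(M_{Y_1})\boxtimes\Bott(M_{Y_2})$ and $\Thom(M_Y)=\Thom(M_{Y_1})\wedge\Thom(M_{Y_2})$, while both $\ch$ and the factor $\prod_a\frac{e^{i\ll a,x\rr}-1}{i\ll a,x\rr}$ are multiplicative. One only has to check that the transgressions multiply correctly, i.e. that a moderate-growth primitive for a wedge product of two classes can be built from moderate-growth primitives of the factors by the usual Leibniz formula; this is straightforward since products of moderate-growth forms are of moderate growth. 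Then for $L_a=\C$ one writes down an explicit $G$-invariant Hermitian connection on the trivial bundle $\bigwedge\C = \C\oplus\C$ adapted to $c(z)$ (with $c(z)$ unitary outside a small ball), computes the equivariant curvature $j(x)+F$ — here $j(x)=\mathrm{diag}(0, i\ll a,x\rr)$ up to the chosen Hermitian metric deformation — and takes the trace $\Tr(\mathrm{diag}(\pm 1)e^{j(x)+F})$. Matching this against $(2i\pi)\frac{e^{i\ll a,x\rr}-1}{i\ll a,x\rr}\Thom(L_a)(x)$ with $\Thom(L_a)$ as in \eqref{exa:Vdim2} is a finite computation; the discrepancy between the two representatives is $D\beta(x)$ for an explicit equivariant form $\beta(x)$ depending smoothly on $z$ and polynomially (hence with moderate growth) on $x$, supported in the chosen ball.

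The main obstacle is bookkeeping rather than conceptual: one must be careful that, because $\ch(\Bott(M_X))$ is defined via the representative $\ch_s(\Sigma)$ built from a pair of adapted connections (compactly supported near $0$), and $\Thom(M_X)$ is built from the cutoff function $\chi$ in \eqref{exa:Vdim2}, the transgression $\beta$ interpolating between the two ends up with support contained in a fixed compact set and with $x$-dependence of at most polynomial growth; only then is the identity an equality of classes in $\mathcal H_{G,c}^{\infty,m}(M_X)$ and not merely in the larger complex of smooth equivariant forms. I would handle this by fixing all cutoffs at the start, performing the $L_a$ computation with those cutoffs, and checking growth of $\beta$ directly from its explicit formula; the product axiom then propagates the statement to general $X$, with the constant $(2i\pi)^{|X|}$ accumulating one factor of $2i\pi$ per weight exactly as in Theorem \ref{musp}.
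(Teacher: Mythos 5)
Your proposal follows essentially the same route as the paper: reduce to the one--dimensional case $M_X=L_a$ via the external product structure of $\Bott$ and $\Thom$, compute the Chern character explicitly from a pair of adapted connections, and exhibit an explicit transgression form with moderate growth. One point needs correction, and it happens to be the only genuinely non--bookkeeping step. You assert that the transgression $\beta(x)$ depends \emph{polynomially} on $x$, "hence" has moderate growth. It does not: the explicit primitive one obtains for $L_a$ is of the form
$$\nu(x)=\Big(\frac{e^{i\chi(|z|^2)\ll a,x\rr}-1}{i\ll a,x\rr}-\frac{e^{i\ll a,x\rr}-1}{i\ll a,x\rr}\,\chi(|z|^2)\Big)\frac{dz}{z},$$
which is an entire, non--polynomial function of $x$ (its numerator involves $e^{i\chi(|z|^2)\ll a,x\rr}$ with the cutoff sitting in the exponent, precisely because the moment of the adapted connection $\nabla^-$ is $i\ll a,x\rr\chi(|z|^2)$ rather than $i\ll a,x\rr$). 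The moderate growth therefore requires a separate argument; the paper supplies it by observing that the $x$--Fourier transform of $(e^{i\chi(|z|^2)\ll a,x\rr}-1)/(i\ll a,x\rr)$ is supported on the compact interval $[0,-\chi(|z|^2)a]$, uniformly over compact sets of $z$ (alternatively, one can check directly that this function of $x$ is bounded). With that substitution your argument goes through; the product reduction and the accumulation of one factor of $2i\pi$ per weight are as you describe.
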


 \begin{proof} Since $M_X=\oplus_{a\in X}L_a,$ and both $\Bott(M_X)$ and $ \Thom(M_X)$ are the external product of the various $\Bott(L_a)$ and $ \Thom(L_a)$ for $a\in X$, it suffices to prove our claim when   $M_X=L_a$.

In this case
$\CE^+$ is the trivial bundle $L_a\times \C$,
$\CE^-=L_a\times L_a$, and the morphism is $\Sigma_z=z$.
We choose $\nabla^+=d$.

Let $\chi(t)$ be a function on $\R$ with compact support  contained in $|t|<1$ and identically equal to $1$ near $0$.
 Let $\beta=(\chi(|z|^2)-1)\frac{dz}{z}$, a well defined $G$ invariant $1$-form.
 We consider
$$\nabla^-=d+\beta$$
Outside $|z|^2<1$, the connections $\nabla^+=d, \nabla^-=d-\frac{dz}{z}$  verify $\nabla^-z=z\nabla^+$ so that the pair $(\nabla^+,\nabla^-)$ is adapted for the morphism $z$.

We compute the corresponding difference of Chern characters.

The moment $j(x)$ of the connection $\nabla^+$ is $0$ and the equivariant curvature $F^+(x)=0$.
So $\ch(\CE^+,\nabla^+)=1$.

 The moment of the connection $\nabla^-$ is $i\ll a,x\rr +i \ll a,x\rr (\chi(|z|^2)-1)=i\ll a,x\rr \chi(|z|^2)$. Thus
the equivariant curvature of $\nabla^-$ is
$$F^{-}(x)=i\ll a,x\rr \chi(|z|^2)-\chi'(|z|^2) dz\wedge d{\overline z}.$$

Remark that $F^{-}(x)=0$, if $|z|^2>1$, so that
$\ch(\CE^+,\nabla^+)-\ch(\CE^-,\nabla^-)$ is supported on $|z|^2<1$.
Thus $\ch(\Bott(L_a)):=\ch(\CE^+,\nabla^+)-\ch(\CE^-,\nabla^-)$
is a closed equivariant form with compact support.

We have explicitly
$$\ch(\Bott(L_a))[z](x)=(1-e^{i \ll a,x\rr \chi(|z|^2)})+e^{i\ll a,x\rr \chi(|z|^2)} \chi'(|z|^2) dz\wedge d{\overline z}.$$

Let us see that      $\ch(\Bott(L_a))[z](x)$ is  equal to
$$(2i\pi)\frac{e^{i\ll a,x\rr  }-1}{i\ll a,x\rr }\Thom(L_a)[z](x)=
\frac{e^{i\ll a,x\rr  }-1}{i\ll a,x\rr } (i \ll a,x\rr \chi(|z|^2)-\chi'(|z|^2) dz\wedge d{\overline z})$$
modulo a boundary with moderate growth.

We consider the one form $$\nu(x)=
\Big((\frac{e^{i\chi(|z|^2) \ll a,x\rr }- 1}{i\ll a,x\rr })-(\frac{e^{i\ll a,x\rr }- 1}{i\ll a,x\rr }) \chi(|z|^2)\Big)\frac{dz}{z}.$$

We see that $\nu(x)$ is  well defined and compactly supported on $L_a$. Indeed
$$(\frac{e^{i\chi(|z|^2) \ll a,x\rr }- 1}{i\ll a,x\rr })-(\frac{e^{i\ll a,x\rr }- 1}{i\ll a,x\rr }) \chi(|z|^2)$$ is equal to $0$ if $z$ is near $0$ where $\chi(|z|^2)$ is equal to $1$,  and is  also equal to $0$ when $[z|>1$ where $\chi(|z|^2)$ is equal to $0$.

Furthermore
the Fourier transform of $ {(e^{i\chi(|z|^2) \ll a,x\rr }- 1)}/{i\ll a,x\rr }$ is  supported, at the point $z\in L_a$,   on  the interval $[0,-\chi(|z|^2)a].$
Thus we see that $\nu$ has moderate growth.

Since it is easily verified that  $$D\nu(x)=2i\pi \frac{e^{i\ll a,x\rr  }-1}{i\ll a,x\rr }\Thom(L_a)(x)-\ch(\Bott(L_a))(x),$$
Proposition \ref{theo:chth} follows.

%To compute its class in $\mathcal H_{G,c}^{\infty}(L_a)$, it is sufficient to compute its integral. The integral of its   top degree term is
%$$\int_{\C} e^{ia(x)\chi(|z|^2)}\chi'(|z|^2) d\overline z dz.$$
%
%We pass in polar coordinates, so we need to compute
%
%$$2i\pi  \int_0^{\infty} e^{ia(x)\chi(t)}\chi'(t)dt=
%2i\pi  \int_0^{\infty} \frac{d}{dt} (\frac{e^{ia(x)\chi(t)}}{ia(x)})dt.$$
%
%As $\chi(0)=1$,
%we obtain that the integral is $$2i\pi\frac{e^{i a(x)}-1}{ia(x)}$$
%So
%$$\ch(\Bott(L_a))(x)= 2i\pi \frac{e^{i a(x)}-1}{ia(x)} \Thom(L_a)(x).$$
%
%
%
%GIVE THE COBOUNDARY...

\end{proof}

Let $g\in G$.
 The sub-manifold $M^g$ for the action of $g$ on $M$ is $M_{X^g}$ where
 $X^g:=[a\in X \,|\, g^a=1]$ a subsequence  of $X$.
Then the restriction of the symbol $\Bott(L_a)$ to $M^g$ is equal to
$$\Bott(M^g)\otimes \bigwedge(M_{X\setminus X^g}).$$

Thus we obtain

\begin{proposition}\label{chg}
We have
$$\ch^g(\Bott(M_X))(x)= (2i\pi)^{|X^g|}
\prod_{a\in X^g} \frac{e^{i\ll a,x\rr }-1}{i\ll a,x\rr }
\prod_{b\notin X^g}(1-g^be^{i\ll b,x\rr })\Thom(M_{X^g})(x).$$

\end{proposition}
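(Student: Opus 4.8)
The plan is to reduce the statement, via the product structure of the Bott symbol, to the already proved Proposition \ref{theo:chth} together with an elementary equivariant super-trace computation over a point. Since $X^g=[a\in X\,|\,g^a=1]$, we have $M^g=M_{X^g}$ and $M_X=M_{X^g}\oplus M_{X\setminus X^g}$. First I would use, as recorded just above, that $\Bott(M_X)$ is the external tensor product of $\Bott(M_{X^g})$ and $\Bott(M_{X\setminus X^g})$, and that upon restriction to $M^g$ the second factor becomes the pull-back from the point $0\in M_{X\setminus X^g}$ of $\Bott(M_{X\setminus X^g})$; the fiber of the latter at $0$ is the $\Z/2$-graded $G$-module $\bigwedge M_{X\setminus X^g}$ with zero differential. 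Thus the restriction of $\Bott(M_X)$ to $M^g$ is the external tensor product of $\Bott(M_{X^g})$ with the complex $\bigwedge M_{X\setminus X^g}$ sitting over the point.

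Next I would invoke the multiplicativity of the (twisted) Chern character under external tensor products (the product axiom of \cite{dpv4}, already used repeatedly above): restricted to $M^g$,
$$\ch^g(\Bott(M_X))(x)=\ch^g(\Bott(M_{X^g}))(x)\cdot \ch^g\big(\textstyle\bigwedge M_{X\setminus X^g}\big)(x),$$
where the second factor is a scalar-valued function of $x$. By the definition of $X^g$ the element $g$ acts trivially on $M_{X^g}$, so the twisting is vacuous there and $\ch^g(\Bott(M_{X^g}))=\ch(\Bott(M_{X^g}))$; Proposition \ref{theo:chth} then gives
$$\ch(\Bott(M_{X^g}))(x)=(2i\pi)^{|X^g|}\prod_{a\in X^g}\frac{e^{i\ll a,x\rr}-1}{i\ll a,x\rr}\,\Thom(M_{X^g})(x)$$
in $\mathcal H^{\infty,m}_{G,c}(M_{X^g})$.

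For the second factor I would compute directly over the point: the curvature vanishes and the moment of the connection acts on $L_b$ by $i\ll b,x\rr$, so writing $\bigwedge M_{X\setminus X^g}=\bigotimes_{b}(\C\oplus L_b)$ as a graded tensor product the equivariant twisted Chern character is the super-trace $\Tr\big(g\,e^{j(x)}\big)=\prod_{b\notin X^g}(1-g^b e^{i\ll b,x\rr})$, the sign $-1$ attached to each $L_b$ reflecting its odd degree (recall $E^+=\C$, $E^-=L_b$ for $\Bott(L_b)$). Multiplying the two factors produces the asserted identity.

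The main obstacle is bookkeeping rather than substance: one must check that the product axiom for $\ch^g$ applies in exactly this shape --- an external tensor product with a complex concentrated over a point in the complementary directions, inside the moderate-growth cohomology $\mathcal H^{\infty,m}_{G,c}$ --- and that the grading and sign conventions (the convention $\ch(\CE)=\ch(\CE^+)-\ch(\CE^-)$ together with $E^+=\bigwedge^{\even}$, $E^-=\bigwedge^{\odd}$) are applied consistently throughout. Both are routine given the framework already in place.
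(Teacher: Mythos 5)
Your proof is correct and follows exactly the route the paper takes: the paper simply records that the restriction of $\Bott(M_X)$ to $M^g=M_{X^g}$ equals $\Bott(M_{X^g})\otimes\bigwedge M_{X\setminus X^g}$ and then deduces the formula by multiplicativity of $\ch^g$, combining Proposition \ref{theo:chth} on the fixed directions with the super-trace $\prod_{b\notin X^g}(1-g^b e^{i\ll b,x\rr})$ on the complementary factor. Your write-up just makes explicit the bookkeeping the paper leaves implicit.
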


We now compute the Chern character of the symbol $\Sigma^F$.
Recall the map $p_F(x,\xi)=h(\xi)+J_Fx$ from $T^*M$ to $M$. The morphism $\Sigma^F$
is the pull-back  of $\Bott(M_X)$ via this map.
It defines a class with compact support. Comparing with the element $t^F\in H_{G,c}^*(T^*_GM)$ which is  obtained as a pull-back of a Thom class, from Proposition \ref{theo:chth} we deduce

\begin{proposition}

We have the equality in $\mathcal H_{G,c}^{\infty,m}(T^*_GM)$

$$\ch(\Sigma^F)(x)= (2i\pi)^{|X|}\prod_{a\in X} \frac{e^{i\ll a,x\rr }-1}{i\ll a,x\rr } t^F(x).$$

  \end{proposition}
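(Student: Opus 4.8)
The plan is to reduce the statement to the already-established Proposition~\ref{theo:chth} on the Bott class, using the functoriality of the Chern character under pull-back. Recall that by Definition~\ref{At}, the symbol $\Sigma^F$ is $p_F^*\Bott(M_X)$, where $p_F\colon T^*M_X\to M_X$ is the $G$-map $p_F(z,\xi)=h(\xi)+J_F z$, and that by construction the intersection of the support of $\Sigma^F$ with $T^*_GM_X$ is the single point $0$ (Lemma~\ref{changmom} together with Lemma~\ref{zeridimu}). Similarly the element $t^F\in H^*_{G,c}(T^*_GM_X)$ was defined as $\chi\, p_F^*\Thom(M_X)$ for a suitable $G$-invariant cutoff $\chi$ equal to $1$ near $T^*_GM_X$. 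So both sides of the claimed identity are, up to the cutoff handled in Remark~\ref{rem:almostcompact}, pull-backs by $p_F$ of the two sides of Proposition~\ref{theo:chth}.

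Concretely, I would argue as follows. The equivariant Chern character commutes with pull-back of equivariant morphisms of super vector bundles by a $G$-map: if $\nabla^\pm$ is a pair of $G$-invariant Hermitian connections on $\mathcal E^\pm\to M_X$ adapted to $\Sigma_z=c(z)$, then $p_F^*\nabla^\pm$ is a pair adapted to $p_F^*\Sigma=\Sigma^F$ on a neighborhood of its support, and $\ch_s(\Sigma^F)(x)=p_F^*\bigl(\ch_s(\Bott(M_X))(x)\bigr)$ as equivariant forms on $T^*M_X$. The same naturality gives $p_F^*\Thom(M_X)$ as a representative for the pull-back Thom class. Now apply $p_F^*$ to the identity of Proposition~\ref{theo:chth},
\[
\ch(\Bott(M_X))(x)=(2i\pi)^{|X|}\prod_{a\in X}\frac{e^{i\ll a,x\rr}-1}{i\ll a,x\rr}\,\Thom(M_X)(x)
\quad\text{in }\mathcal H_{G,c}^{\infty,m}(M_X),
\]
noting that the scalar function $\prod_{a\in X}(e^{i\ll a,x\rr}-1)/(i\ll a,x\rr)$ is unaffected by $p_F^*$ since it depends only on $x\in\g$, not on the point of the manifold. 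One obtains
\[
\ch(\Sigma^F)(x)=(2i\pi)^{|X|}\prod_{a\in X}\frac{e^{i\ll a,x\rr}-1}{i\ll a,x\rr}\,p_F^*\Thom(M_X)(x).
\]
Multiplying by the cutoff $\chi$ (which commutes with everything in sight because it is just a function) and recalling $t^F(x)=\chi\,p_F^*\Thom(M_X)(x)$ yields the asserted equality in $\mathcal H_{G,c}^{\infty,m}(T^*_GM)$.

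The one point requiring care — and the main obstacle — is the transfer of the support and moderate-growth bookkeeping from $M_X$ to $T^*M_X$ along $p_F$. The form $\ch_s(\Bott(M_X))$ is compactly supported on $M_X$; its pull-back $p_F^*\ch_s(\Bott(M_X))$ is supported in $p_F^{-1}(\text{small ball})$, which is \emph{not} compact in $T^*M_X$, only proper over the base in the transverse directions. This is exactly the reason the cutoff $\chi$ near $T^*_GM_X$ is introduced: by Lemma~\ref{changmom} the moment map $\mu$ on $T^*M_X$ corresponds under the isomorphism with $M_X\oplus M_X$ to $[\tfrac12\nu_F,-\tfrac12\nu_F]$, whose zero fiber meets $p_F^{-1}(0)$ only at the origin, so $\chi\,p_F^*(\cdot)$ \emph{is} compactly supported and $D$ of it vanishes near $T^*_GM_X$; this is the content of the proposition preceding, asserting $\alpha^g=\chi\,\ch^g(\Sigma)$ lands in $\mathcal H_{G,c}^{\infty,m}$. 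The moderate-growth-in-$x$ property and the independence of the class from the choices of $\chi$ and of adapted connections then follow verbatim from the discussion in Subsection~\ref{chern} (the transgression one-form controlling the change of connection, pulled back by $p_F$ and cut off by $\chi$, still has moderate growth over compact subsets of $T^*_GM_X$). Once this is observed, no further computation is needed; the identity is literally $p_F^*$ applied to Proposition~\ref{theo:chth} followed by multiplication by $\chi$.
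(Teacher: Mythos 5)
Your proposal is correct and follows exactly the route the paper takes: the paper's entire argument is the one-line observation that $\Sigma^F=p_F^*\Bott(M_X)$ and $t^F=\chi\,p_F^*\Thom(M_X)$, so the identity is Proposition~\ref{theo:chth} pulled back along $p_F$ and cut off near $T^*_GM_X$ via Remark~\ref{rem:almostcompact}. Your additional care about the non-compactness of $p_F^{-1}(\text{ball})$ and the moderate-growth bookkeeping fills in details the paper leaves implicit, but does not change the argument.
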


The computation of the infinitesimal index follows from this formula.
As $$
\prod_{a\in X} \frac{e^{i\ll a,x\rr }-1}{i\ll a,x\rr }=\int_{\g^*} e^{i\ll \xi,x\rr}B_X(\xi),$$
using  Formula (\ref{eq:convol}), we obtain:

%\marginpar{I think the constants were wrong due to orientations problems}

  \begin{theorem}\label{infdexchern}
$$\infdex^{-\mu}_G(\ch(\Sigma^F))= (2\pi)^{2|X|} B_X*_c T_X^F.$$

Similarly for any $g\in G$,  we have
$$\infdex^{-\mu}_G(\ch^g (\Sigma^F))=(2\pi)^{2|X^g|}
 \prod_{b\notin X^g} (1-g^bt_b)(B_{X^g}*_c T_{X^g}^F).$$

                    \end{theorem}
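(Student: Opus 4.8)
The plan is to establish the first identity by combining three facts already in place: (a) the formula $\ch(\Sigma^F)(x)=(2i\pi)^{|X|}\prod_{a\in X}\frac{e^{i\ll a,x\rr}-1}{i\ll a,x\rr}\,t^F(x)$ in $\mathcal H^{\infty,m}_{G,c}(T^*_GM)$; (b) the already‑computed value $\infdex^{-\mu}_G(t^F)=(-1)^{|X|}(2i\pi)^{|X|}T^F_X$; and (c) the multiplicativity property \eqref{eq:convol}. First I would observe that, by \eqref{eq:FB}, the function $b(x)=\prod_{a\in X}\frac{e^{i\ll a,x\rr}-1}{i\ll a,x\rr}$ equals $\int_{\g^*}e^{i\ll\xi,x\rr}B_X(\xi)$, i.e.\ it is the Fourier transform of the compactly supported measure $B_X$, so that \eqref{eq:convol} applies to $\ch(\Sigma^F)(x)=(2i\pi)^{|X|}b(x)\,t^F(x)$; the convolution $B_X*_cT^F_X$ on the right‑hand side is legitimate since $T^F_X$ is supported in a pointed cone. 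This yields
\[
\infdex^{-\mu}_G(\ch(\Sigma^F))=(2i\pi)^{|X|}\,B_X*_c\infdex^{-\mu}_G(t^F)=(-1)^{|X|}(2i\pi)^{2|X|}\,B_X*_cT^F_X ,
\]
and since $(-1)^{|X|}(2i\pi)^{2|X|}=(2\pi)^{2|X|}$ the first formula follows.

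For the twisted formula I would run the identical argument over the fixed point submanifold $M^g=M_{X^g}$. From Proposition \ref{chg}, together with the description $\Sigma^F=p_F^*\Bott(M_X)$ restricted to $T^*M^g$, one obtains the twisted analogue
\[
\ch^g(\Sigma^F)(x)=(2i\pi)^{|X^g|}\prod_{a\in X^g}\frac{e^{i\ll a,x\rr}-1}{i\ll a,x\rr}\;\prod_{b\notin X^g}(1-g^be^{i\ll b,x\rr})\;t^F(x)
\]
in $\mathcal H^{\infty,m}_{G,c}(T^*_GM_{X^g})$, where $t^F$ now denotes the analogous pull‑back‑of‑Thom class for the list $X^g$, for which $\infdex^{-\mu}_G(t^F)=(-1)^{|X^g|}(2i\pi)^{|X^g|}T^F_{X^g}$ is known. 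Here $\prod_{a\in X^g}\frac{e^{i\ll a,x\rr}-1}{i\ll a,x\rr}$ is the Fourier transform of $B_{X^g}$ by \eqref{eq:FB}, while each factor $1-g^be^{i\ll b,x\rr}$ is the Fourier transform of $\delta_0-g^b\delta_b$; hence, applying \eqref{eq:convol} (equivalently, using the $\Lambda$‑equivariance of $\infdex$, Proposition \ref{equiinfdex}, to replace multiplication by $e^{i\ll b,x\rr}$ with the lattice translation $t_b$), the whole prefactor turns into the operator $\prod_{b\notin X^g}(1-g^bt_b)$ acting on $B_{X^g}*_c(-)$. Collecting constants exactly as above, with $(-1)^{|X^g|}(2i\pi)^{2|X^g|}=(2\pi)^{2|X^g|}$, gives
\[
\infdex^{-\mu}_G(\ch^g(\Sigma^F))=(2\pi)^{2|X^g|}\prod_{b\notin X^g}(1-g^bt_b)\,(B_{X^g}*_cT^F_{X^g}).
\]

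I do not expect a serious obstacle: the substance of the theorem is entirely carried by the earlier computations of $\ch(\Sigma^F)$ and $\ch^g(\Sigma^F)$ in terms of $t^F$, by the value of $\infdex^{-\mu}_G(t^F)$, and by property \eqref{eq:convol}. The one step that deserves care is the derivation of the twisted Chern character formula for $\Sigma^F$ over the fixed point locus — checking that restricting $p_F$ to $T^*M^g$ and invoking Proposition \ref{chg} produces precisely the displayed product, with $t^F$ correctly interpreted as a class on $T^*_GM_{X^g}$ — and, secondarily, the routine bookkeeping of the powers of $2i\pi$ and of $-1$ and the identification of multiplication by $e^{i\ll b,x\rr}$ with $t_b$ under $\infdex$.
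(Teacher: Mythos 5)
Your proposal is correct and follows essentially the same route as the paper: it combines the computed identity $\ch(\Sigma^F)(x)=(2i\pi)^{|X|}\prod_{a\in X}\frac{e^{i\ll a,x\rr}-1}{i\ll a,x\rr}\,t^F(x)$ (and its twisted analogue from Proposition \ref{chg}), the value $\infdex^{-\mu}_G(t^F)=(-1)^{|X|}(2i\pi)^{|X|}T_X^F$, and the convolution property \eqref{eq:convol}, with the same sign bookkeeping $(-1)^{|X|}(2i\pi)^{2|X|}=(2\pi)^{2|X|}$. You actually supply more detail than the paper does, in particular on the twisted case, where the paper only says ``similarly.''
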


  In particular, when $g$ is a vertex on $X$ (Definition \ref{def:toricv}), the system $X^g$ spans $\g^*$ and  the infinitesimal index of   $\ch^g(\Sigma^F)$ is a piecewise polynomial function on $\g^*$ with respect to $(X,\Lambda)$.  In fact we even see that this functions are continuous on $\g^*$.

\subsection{The  index theorem }

We are now ready to compare   the morphism index and the morphism infdex on $T^*_GM_X$ and to prove Formula (\ref{eq:boxindex}).

We denote by $X_R$ the sequence $X\cup -X$ of characters. Remark that the zonotope associated to $X_R$ contains $0$ in its closure.

%\marginpar{I think the constants were skightly wrong}

\begin{proposition}\label{theo:boxindex}
Let $X\subset \Lambda$ be a  system of characters of $G$.
Let $$X_R=X\cup -X.$$
Let $\Sigma$ be a $G$-invariant transversally elliptic symbol on $M$.
Let $\ind_m(\Sigma)\in \mathcal C_\Z[\Lambda ]$  be its multiplicity index.
Let $\infdex^{-\mu}_G(\ch(\Sigma))$ be the infinitesimal index of its Chern character.
Then
$$B_{X_R}*_d \ind_m(\Sigma)=(2i\pi)^{-2|X|}\infdex_G^{-\mu}\ch(\Sigma).$$
\end{proposition}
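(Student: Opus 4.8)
The plan is to observe that both sides of the asserted identity, viewed as maps $\Sigma\mapsto(\,\cdot\,)$ on $K_G^0(T^*_GM_X)$, are morphisms of $R(G)=\Z[\Lambda]$-modules, and then to check the identity only on a generating set. On the left, $\ind_m$ is an $R(G)$-module map by construction, and $f\mapsto B_{X_R}*_d f$ is additive and commutes with the translations $t_\lambda$, $\lambda\in\Lambda$. On the right, $\ch$ is additive, $\infdex_G^{-\mu}$ is linear, and by Propositions \ref{equichern} and \ref{equiinfdex} together with the convolution formula \eqref{eq:convol} the composite $\Sigma\mapsto\infdex_G^{-\mu}(\ch(\Sigma))$ intertwines tensoring by $L_\lambda$ with multiplication by $e^{i\ll\lambda,x\rr}$ on forms, hence with translation by $\lambda$ on $\mathcal D'(\g^*)$. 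Since an additive map that is $\Lambda$-equivariant in this sense is automatically $R(G)$-linear, it suffices to prove the formula on $R(G)$-module generators of $K_G^0(T^*_GM_X)$.

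For the generators I would take the Atiyah symbols $\Sigma^F$. Indeed $\ind_m$ is an $R(G)$-isomorphism onto $\tilde{\mathcal F}(X)$ (recalled from \cite{dpv2}), $\tilde{\mathcal F}(X)$ is generated over $R(G)$ by the polarized partition functions (Theorem \ref{geneF}), and $\ind_m(\Sigma^F)=\mathcal P_{-X}^F$ (Theorem \ref{ATin}), whence the classes $\Sigma^F$, with $F$ running over the regular faces, generate $K_G^0(T^*_GM_X)$ as an $R(G)$-module. So the proof reduces to the single identity
$$B_{X_R}*_d \mathcal P_{-X}^F=(2i\pi)^{-2|X|}\,\infdex_G^{-\mu}\bigl(\ch(\Sigma^F)\bigr).$$

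To finish I would evaluate the two sides separately. For the left-hand side, \eqref{eq:FB} gives the factorization $B_{X_R}=B_X*_cB_{-X}$; moving $B_X$ past the discrete convolution (legitimate here because $\mathcal P_{-X}^F$ is supported in a translate of a pointed cone) and applying Proposition \ref{lem:BZ} to the list $-X$ yields $B_{X_R}*_d\mathcal P_{-X}^F=B_X*_cT_{-X}^F$. Writing $X=A\cup B$ according to the sign of $\phi\in F$, one reads off from the definition of the polarized multispline that $T_X^F=(-1)^{|B|}T_{A,-B}$ and $T_{-X}^F=(-1)^{|A|}T_{A,-B}$, so $T_{-X}^F=(-1)^{|X|}T_X^F$. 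For the right-hand side, Theorem \ref{infdexchern} gives $\infdex_G^{-\mu}(\ch(\Sigma^F))=(2\pi)^{2|X|}B_X*_cT_X^F$, and since $2|X|$ is even the scalar collapses: $(2i\pi)^{-2|X|}(2\pi)^{2|X|}=i^{-2|X|}=(-1)^{|X|}$. Both sides therefore equal $(-1)^{|X|}B_X*_cT_X^F$, which proves the proposition.

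I do not expect a serious obstacle here: the substantive input is entirely contained in the already-proved Theorems \ref{ATin} and \ref{infdexchern}, so what remains is bookkeeping. The one point deserving care is that $\infdex_G^{-\mu}\circ\ch$ must be seen to be genuinely $R(G)$-linear and not merely $\Lambda$-equivariant — this is why I would route the reduction through additivity plus $\Lambda$-equivariance as above — together with the small sign reconciliation $T_{-X}^F=(-1)^{|X|}T_X^F$ matching the constant $i^{-2|X|}$ coming out of $(2i\pi)^{-2|X|}(2\pi)^{2|X|}$.
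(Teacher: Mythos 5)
Your proposal is correct and follows essentially the same route as the paper: reduce to the generators $\Sigma^F$ via the $\Lambda$-equivariance of both sides (Propositions \ref{equichern} and \ref{equiinfdex}), then combine $\ind_m(\Sigma^F)=\mathcal P_{-X}^F$, the factorization $B_{X_R}=B_X*_cB_{-X}$, the identity $B_{-X}*_d\mathcal P_{-X}^F=T_{-X}^F=(-1)^{|X|}T_X^F$, and Theorem \ref{infdexchern}. Your sign bookkeeping $(2i\pi)^{-2|X|}(2\pi)^{2|X|}=(-1)^{|X|}$ matching $T_{-X}^F=(-1)^{|X|}T_X^F$ is exactly the cancellation the paper's (more telegraphic) proof relies on.
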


\begin{proof} Using Proposition \ref{equiinfdex}  and Proposition \ref{equichern} ,
we are reduced to prove our  equality on generators of $K_G^0(T^*_G M_X)$.
We thus consider the symbol $\Sigma^F$  which, by Theorem \ref{ATin}, has  infinitesimal index equal to the polarized partition function
$\mathcal P_{-X}^F$.
 Recall that  $$B_{-X}*_d \mathcal P_{-X}^F=T_{-X}^F=(1)^{|X|} T_X^F.$$
As $B_{X_R}=B_X*B_{-X}$, the theorem follows from Theorem \ref{infdexchern}.
\end{proof}

Using the deconvolution theorem in the unimodular case, Proposition \ref{theo:boxindex} leads to the following theorem, which is strongly reminiscent of the Riemann-Roch theorem.
Remark that as $X_R$ contains $0$ in its interior, we may use any alcove containing $0$ in its closure in the limiting procedure.

We denote by $Todd(X_R)$ the Todd operator associated to $X_R$. It acts on the space of piecewise polynomial functions for the system $(X,\Lambda)$.

\begin{theorem}

Let $X\subset \Lambda$ be a unimodular system of characters of $G$.
Let $$Todd(X_R)=\prod_{a\in X\cup -X}\frac{\partial_a}{1-e^{-\partial_a}}$$
be the Todd operator.

Let $\Sigma$ be a $G$-invariant transversally elliptic symbol on $M$,
   $\ind_m(\Sigma)\in \mathcal C_\Z[\Lambda ]$   be  its multiplicity index and
 $\infdex^{-\mu}_G(\ch(\Sigma))$ be  the infinitesimal index of its Chern character.
Then

\begin{itemize}

\item $\infdex^{-\mu}_G(\ch\Sigma)$ is a piecewise polynomial measure on $\mathfrak g^*$.

\item

Let $\c$ be   an alcove   having 0 in its closure. We have

$$\ind_m(\Sigma)=(2i\pi)^{-2|X|}\lim_{\c} Todd(X_R)_{pw} \infdex^{-\mu}_G(\ch(\Sigma)).$$

\end{itemize}

\end{theorem}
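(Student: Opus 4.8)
The plan is to deduce this theorem from Proposition \ref{theo:boxindex} together with the unimodular deconvolution formula of Theorem \ref{theo:inversionuni}. First I would recall that by Proposition \ref{theo:boxindex} we have the identity of distributions
$$
B_{X_R}*_d \ind_m(\Sigma)=(2i\pi)^{-2|X|}\infdex_G^{-\mu}(\ch(\Sigma)),
$$
and that $\infdex_G^{-\mu}(\ch^g(\Sigma^F))$ was computed in Theorem \ref{infdexchern} to be a (continuous) piecewise polynomial function for $(X,\Lambda)$. Since the Atiyah symbols $\Sigma^F$ generate the $R(G)$-module $K^0_G(T^*_GM_X)$ and $\infdex$, $\ch$ are $\mathbb{Z}[\Lambda]$-equivariant (Propositions \ref{equiinfdex} and \ref{equichern}), linearity and translation-equivariance give the first bullet: $\infdex^{-\mu}_G(\ch\Sigma)$ is a piecewise polynomial measure on $\mathfrak g^*$. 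Equivalently, one can observe directly from $B_{X_R}*_d\ind_m(\Sigma)$ that the right-hand side is a semi-discrete convolution of a Box spline with a discrete measure, hence piecewise polynomial by the discussion in Part 1.

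For the second bullet, I would apply the operator $\lim_{\c}\,Todd(X_R)_{pw}$ to both sides of the displayed identity. Since $X\subset\Lambda$ is unimodular, the list $X_R=X\cup-X$ need not be unimodular in general — but this is precisely the case where $0$ lies in the interior of the zonotope $Z(X_R)$, so one wants the version of the inversion formula valid for $X_R$. Here the cleanest route is to note that $\ind_m(\Sigma)\in\mathcal C_\Z[\Lambda]$ and that $B_{X_R}*_d$ followed by $\lim_{\c}Todd(X_R)_{pw}$ is, by Theorem \ref{theo:main} (or its unimodular specialization Theorem \ref{theo:inversionuni} if $X_R$ is unimodular), the identity operator on $\mathcal C[\Lambda]$, up to the contributions of the toric vertices $g\neq 1$ of $\mathcal V(X_R)$. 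So the main point is to check that these extra vertex contributions vanish, i.e. that
$$
\ind_m(\Sigma)=\sum_{g\in\mathcal V(X_R)}\hat g\,\lim_{\c}\bigl(D(g,X_R\setminus X_R^g)^{-1}Todd(X_R^g)\bigr)_{pw}\,\omega_g\bigl(\ind_m(\Sigma)\bigr)
$$
collapses to the $g=1$ term applied to $B_{X_R}*_d\ind_m(\Sigma)$. If $X$ is unimodular then so is $X_R$ exactly when every vertex of $X_R$ is trivial; in that case $\mathcal V(X_R)=\{1\}$, $Todd(X_R^1)=Todd(X_R)$, and Theorem \ref{theo:inversionuni}(ii) applied to $K=\ind_m(\Sigma)$ gives directly
$$
\ind_m(\Sigma)=\lim_{\c}Todd(X_R)_{pw}\bigl(B_{X_R}*_d\ind_m(\Sigma)\bigr)=(2i\pi)^{-2|X|}\lim_{\c}Todd(X_R)_{pw}\,\infdex^{-\mu}_G(\ch(\Sigma)),
$$
which is the claim. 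Since $0$ is interior to $Z(X_R)$, Remark \ref{cont} ensures $Todd(X_R)_{pw}$ of the relevant spline extends continuously, so the limit $\lim_{\c}$ is independent of the choice of alcove $\c$ with $0$ in its closure, as stated.

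The main obstacle I anticipate is the interplay between unimodularity of $X$ and that of $X_R=X\cup-X$: one must be careful that the deconvolution used is the correct one for the list $X_R$, and in particular identify the set $\mathcal V(X_R)$ of toric vertices. For the theorem as stated (with $X$ unimodular and — implicitly — so that $Todd(X_R)_{pw}$ inverts $B_{X_R}*_d$ after restriction), the cleanest argument simply invokes Theorem \ref{theo:inversionuni} with $X$ there replaced by $X_R$; if instead $X_R$ fails to be unimodular one would invoke the general Theorem \ref{theo:main} and argue, using the explicit vertex computation $\infdex^{-\mu}_G(\ch^g(\Sigma^F))=(2\pi)^{2|X^g|}\prod_{b\notin X^g}(1-g^bt_b)(B_{X^g}*_cT_{X^g}^F)$ of Theorem \ref{infdexchern}, that the vertex terms reorganize correctly. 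In either case the remaining steps — pulling the scalar $(2i\pi)^{-2|X|}$ through, using $\mathbb{Z}[\Lambda]$-linearity to reduce to the generators $\Sigma^F$, and invoking $B_{X_R}=B_X*B_{-X}$ with $B_{-X}*_d\mathcal P_{-X}^F=T_{-X}^F$ — are routine and already assembled in the proof of Proposition \ref{theo:boxindex}.
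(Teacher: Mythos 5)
Your proposal is correct and is essentially the paper's own argument: the theorem is presented there as a direct consequence of Proposition \ref{theo:boxindex} combined with the unimodular deconvolution formula (Theorem \ref{theo:inversionuni}) applied to the list $X_R$, exactly as you do, with the choice of alcove justified because $0$ is interior to $Z(X_R)$. The one hesitation you need not have had concerns unimodularity of $X_R$: a basis of $V$ extracted from $X_R=X\cup -X$ cannot contain both $a$ and $-a$, hence is a sign-change of a basis extracted from $X$, and sign changes preserve being a basis of $\Lambda$; so $X$ unimodular forces $X_R$ unimodular, $\mathcal V(X_R)=\{1\}$, and the contemplated detour through the general Theorem \ref{theo:main} is unnecessary.
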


\begin{remark} It is possible to show in this unimodular  case that the piecewise polynomial function   $(2i\pi)^{-2|X|} Todd(X_R)_{pw} \infdex^{-\mu}_G(\ch(\Sigma))$ extends to a continuous function on $\g^*$. Thus its restriction to $\Lambda$ gives the index multiplicity.

\end{remark}

We formulate now the general index theorem.
We denote by $\mathcal V(X)\subset G$ the set of toric vertices of the sequence of characters $X$ of $G$ (see Definition \ref{def:toricv}).

\begin{theorem}\label{final}
Let $X$ be a sequence of elements in $\Lambda$ and let $M:=M_X$. Let
$$X_R= X\cup -X.$$

Let $\Sigma$ be a $G$-invariant transversally elliptic symbol on $M$
and $\ind_m(\Sigma)\in \mathcal C_{\mathbb Z}[\Lambda ]$  be its multiplicity index.

For any $g\in \CV(X)$, let
 $\infdex^{-\mu}_G(\ch^g(\Sigma))$ be the distribution on $\g^*$ associated to  the cohomology class  $\ch^g(\Sigma)\in \mathcal H_{G,c}^{\infty,m}(T^*_G M^g)$ by the infinitesimal index.
 Then

\begin{itemize}

\item  $\infdex^{-\mu}_G(\ch^g(\Sigma))$  is a piecewise polynomial measure on $\mathfrak g^*$.

\item

Let $\c$ be   an alcove   having 0 in its closure. We have
% \marginpar{I split the formula in 2 lines before it was too long now not very nice}

\begin{equation}
\label{lamax}\ind_m(\Sigma)
=\sum_{g\in \mathcal V(X)}
 ( 2i\pi )^{-2|  X^g|}\hat g \lim_{\c} D(X_R\setminus X_R^g,g)^{-1}\hspace{3cm}\end{equation}
 $$\hspace{5cm} Todd(X_R^g)*_{pw}    \infdex^{-\mu}_G(\ch^{g^{-1}}(\Sigma)).$$

\end{itemize}
\end{theorem}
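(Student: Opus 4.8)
The plan is to combine the two main ingredients already assembled in the paper: the ``Fourier side'' index identity of Proposition \ref{theo:boxindex} (and its twisted versions coming from Theorem \ref{infdexchern}), and the purely algebraic deconvolution formula of Theorem \ref{theo:main}. First I would reduce to the case of generators. By Theorem \ref{geneF}, the $\mathbb Z[\Lambda]$-module $\tilde{\mathcal F}(X)$ is spanned by the polarized partition functions $\mathcal P_X^F$, and under ${\rm ind}_m$ these are realized by the Atiyah symbols $\Sigma^F$ (Theorem \ref{ATin}); since both sides of \eqref{lamax} are $\mathbb Z[\Lambda]$-equivariant (using Proposition \ref{equiinfdex} and Proposition \ref{equichern}, exactly as in the proof of Proposition \ref{theo:boxindex}), it suffices to verify \eqref{lamax} for $\Sigma=\Sigma^F$.

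For $\Sigma^F$ the first bullet is immediate: Theorem \ref{infdexchern} gives
$\infdex^{-\mu}_G(\ch^g(\Sigma^F))=(2\pi)^{2|X^g|}\prod_{b\notin X^g}(1-g^b t_b)(B_{X^g}*_c T_{X^g}^F)$, which is a piecewise polynomial measure (continuous, in fact, when $g$ is a vertex, since then $X^g$ spans). For the second bullet, I would substitute this twisted-Chern-character formula (evaluated at $g^{-1}$ in place of $g$, as in the statement) into the right-hand side of \eqref{lamax}. The operator $D(X_R\setminus X_R^g,g)^{-1}$ is designed to invert exactly the finite product $\prod_{b}(1-g^{-b}t_b)$ appearing there, while $Todd(X_R^g)_{pw}$ composed with $\lim_{\c}$ inverts the semi-discrete convolution with $B_{X_R^g}=B_{X^g}*B_{-X^g}$ by Theorem \ref{theo:inversionuni} (applied to the unimodular-looking local model $X^g$; more precisely one uses that $X_R^g$ contains $0$ in the interior of its zonotope so any alcove with $0$ in its closure works). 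After these cancellations the $g$-th summand of the right-hand side becomes $\hat g\,\widehat{g^{-1}}$ applied to the partition-function data supported at the vertex, i.e. precisely the $g$-th piece $\hat g\, k_g$ of $\ind_m(\Sigma^F)=\mathcal P_{-X}^F$ in the decomposition $DM(X)_{\mathbb C}=\oplus_{g}\hat g D(X^g)_{\mathbb C}$ of Theorem \ref{convver}(2), combined with Proposition \ref{fondi}. Summing over $g\in\mathcal V(X)$ and invoking Theorem \ref{theo:main} then yields $\ind_m(\Sigma^F)$, completing the verification on generators.

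The main obstacle I expect is bookkeeping the constants $(2i\pi)^{-2|X^g|}$ against the $(2\pi)^{2|X^g|}$ of Theorem \ref{infdexchern} together with the sign $(-1)^{|X|}$ relating $T_{-X}^F$ and $T_X^F$, and — more seriously — checking that the three operators $\hat g$, $D(X_R\setminus X_R^g,g)^{-1}$, $Todd(X_R^g)_{pw}$, $\lim_{\c}$ really commute/compose the way the algebraic identity in Proposition \ref{fondi} requires, given that $D(g,\cdot)_{pw}$ respects supports while $\nabla(g,\cdot)$ moves them (the ``Be careful'' warning). The cleanest route is to phrase the substitution so that one is literally applying Proposition \ref{fondi} to $K=\mathcal P_{-X}^F\in DM(X)$, with $\omega_g(K)$ matched to $\infdex^{-\mu}_G(\ch^{g^{-1}}(\Sigma^F))$ up to the explicit constant, and then quote Theorem \ref{theo:main} verbatim; the final step — passing from the identity for $\delta_0$ (finite support) to arbitrary $\Sigma$ — is handled by the translation-invariance argument already given at the end of the proof of Theorem \ref{theo:main}, so no new difficulty arises there.
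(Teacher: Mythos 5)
Your plan is essentially the paper's own proof: reduce to the generators $\Sigma^F$ by $\mathbb Z[\Lambda]$-equivariance (Propositions \ref{equiinfdex} and \ref{equichern}), match the $g$-th ingredient $\omega_g(K)=B_{X_R^g}*_d\bigl(\hat g^{-1}\nabla_{X_R\setminus X_R^g}K\bigr)$ of Theorem \ref{theo:main}, for $K=\ind_m(\Sigma^F)$ and the system $X_R$, with $(2\pi)^{-2|X^g|}$ times (a sign times) $\infdex_G^{-\mu}(\ch^{g^{-1}}(\Sigma^F))$ via Theorem \ref{infdexchern}, and then quote Theorem \ref{theo:main}. One inaccuracy to correct: $\mathcal P_{-X}^F$ does \emph{not} lie in $DM(X)$ (nor in $DM(X_R)$) --- it only lies in $\mathcal F(X)$ --- so Proposition \ref{fondi} and the decomposition of Theorem \ref{convver}(2) cannot be applied to it as you suggest; this costs nothing, however, because Theorem \ref{theo:main}(ii) holds for arbitrary $K\in\mathcal C[\Lambda]$, and the required identification of $\omega_g(K)$ is obtained directly from $\nabla_{-X\setminus(-X^g)}t_{a_X}\mathcal P_X^F=\pm\, t_{a_{X^g}}\mathcal P_{X^g}^F$ and $B_{-X^g}*t_{a_{X^g}}\mathcal P_{X^g}^F=T_{X^g}^F$, which is exactly the computation the paper carries out.
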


\begin{proof}

Again, using Proposition \ref{equiinfdex}  and Proposition \ref{equichern} ,
we are reduced to prove our  equality on generators of $K_G^0(T^*_G M_X)$.

Let $K=\ind_m(\Sigma^F)=(-1)^{|X|}t_{a_X}\mathcal P_X^F$.
From the general  inversion formula obtained in the first part of this paper (Theorem \ref{theo:main}),
we have  only to show that, for every $g\in \CV(X)$,

 $$B_{X_R^g}*_d \left(\hat g^{-1}\nabla_{X_R\setminus X_R^g}K\right)= (-1)^{|X\setminus X^g|}( 2\pi )^{-2|  X^g|}\infdex^{-\mu}_G(\ch^{g^{-1}}(\Sigma)).$$

%\marginpar{I think a sign was wrong}

Now observe that $$(-1)^{|X/X_g|}\nabla_{-X\setminus  (-X^g)}t_{a_X}\mathcal P_X^F=t_{a_{X^g}}\mathcal P_{X^g}^F$$

and
$$B_{-X^g}*t_{a_{X^g}}\mathcal P_{X^g}^F=T_{X^g}^F.$$
So substituing, we get
$$B_{X_R^g}*_d \left(\hat g^{-1}\nabla_{X_R\setminus X_R^g}K\right)=(-1)^{|X^g|} \prod_{b\notin X^g} (1-g^{-b}t_b)(B_{X^g}* T_{X^g}^F).$$
But  by Theorem
\ref{infdexchern}
$$\infdex^{-\mu}_G(\ch^{g^{-1}} (\Sigma^F))=(2\pi)^{2|X^g|}
 \prod_{b\notin X^g} (1-g^{-b}t_b)(B_{X^g}*_c T_{X^g}^F).$$
so our claim follows.

 \end{proof}

\subsubsection{The Box spline again}
It is quite amusing to verify this theorem on elliptic symbols.
The infinitesimal index of $\Bott(T^*M_X)$, after multiplying by $1/(2i\pi)^{2|X|}$  is the double box spline $B_{X\cup -X}.$
In the unimodular case, the "mother formula"
$$\lim_{\c}Todd(X\cup -X)*_{pw} B_{X\cup -X}=\delta_0$$
expresses just the fact that the index of the elliptic operator with symbol $\Bott(T^*M_X)$ is the trivial representation of $G$.

The infinitesimal index of the elliptic symbols are thus obtained by finite number of translations of the double box spline.

\appendix
\section{ Nelson formula}
We here sketch a short and explicit proof of Nelson formula, as suggested to us by Michel Duflo.

Let $E$ be a Hermitian vector space.
The projective space $P(E)$ is a Hamiltonian space for the action of the unitary group $U(E)$.
Let $\mathfrak u$ be the space of anti-hermitian matrices: $\mathfrak u$ is the Lie algebra of $U(E)$.

Let $\omega$ be the Kahler form on $P(E)$. We denote by $\omega(u)=\mu^P(u)+\omega$  the equivariant symplectic form.
Here $u\in \mathfrak u$, and $\mu^P(p)(u)=\frac{<uv,v>}{<v,v>}$, and $p\in P(E)$ is the image of $v$.
The form $\omega(u)$ is a closed equivariant form on $P(E)$.
We have
$$\frac{1}{(2i\pi)^{\dim P(E)}}
\int_{P(E)}e^{i\omega(u)}=1.$$

Consider the  $End(E)$-valued polynomial function of $(u,z)$
$$Q(u,z)=\frac{\det(u-z)}{u-z}.$$
 Here $u\in \mathfrak u$ and $z$ is a variable.
 We can substitute $\omega(u)$ to $z$, so that
 $\beta(p,u):=Q(u,\omega(u))$ is an $End(E)$ valued differential form on $P(E)$ depending polynomially of $u$.
 \begin{theorem}\label{nelthe}

 For any $u\in \mathfrak u$, we have
 \begin{equation}\label{Nelfor}\int_{P(E)} e^{i\omega(u)} \beta(p,u)=e^{iu}.\end{equation}

 \end{theorem}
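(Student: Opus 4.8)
The plan is to prove the Nelson formula \eqref{Nelfor} by reducing it to the scalar (one–dimensional) case via the localization theorem in equivariant cohomology. First I would diagonalize: since both sides of \eqref{Nelfor} are $\mathrm{ad}$-invariant in $u\in\mathfrak u$ (the left side because $U(E)$ acts by symplectomorphisms of $P(E)$ fixing $\omega(u)$ up to the adjoint action, the right side obviously), it suffices to treat $u$ in a maximal torus, i.e. $u=\mathrm{diag}(i\theta_1,\dots,i\theta_n)$ with $\theta_j\in\R$. For such $u$ the fixed points of the induced vector field on $P(E)$ are exactly the coordinate lines $p_1,\dots,p_n$, $p_j=[e_j]$.

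Next I would apply the Atiyah--Bott--Berline--Vergne localization formula to the equivariantly closed $\End(E)$-valued form $e^{i\omega(u)}\beta(p,u)$ on $P(E)$. Each contribution is $\frac{(\text{value of }e^{i\omega(u)}\beta(p,u)\text{ at }p_j)}{e_{p_j}(u)}$ where $e_{p_j}(u)$ is the equivariant Euler class of the tangent space $T_{p_j}P(E)$. At the fixed point $p_j$ one has $\mu^P(p_j)(u)=i\theta_j$, so $\omega(u)$ evaluated there (the purely polynomial, degree-zero part) is $i\theta_j$, hence $\beta(p_j,u)=Q(u,i\theta_j)=\frac{\det(u-i\theta_j)}{u-i\theta_j}$; in the diagonal basis this $\End(E)$-valued quantity is the diagonal matrix whose $k$-th entry is $\prod_{l\neq k}(i\theta_l-i\theta_j)$ for $k\ne j$ and $\prod_{l\neq j}(i\theta_l-i\theta_j)$ for $k=j$ — but crucially only the $j$-th entry survives the fixed-point contribution because of a compensating vanishing/the shape of $Q$, and after dividing by $e_{p_j}(u)=\prod_{l\neq j} i(\theta_l-\theta_j)$ (the weights of $T_{p_j}P(E)$) one gets exactly $e^{i\theta_j}$ times the rank-one projector $E_{jj}$. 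Summing over $j$ yields $\sum_j e^{i\theta_j}E_{jj}=e^{iu}$, which is the claim.

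The main obstacle — and the step I would spend the most care on — is the bookkeeping at the fixed points: showing that $Q(u,z)=\det(u-z)/(u-z)$, once $z$ is replaced by $\omega(u)$ and localized at $p_j$, combines with the Euler class denominator to kill the off-diagonal entries and leave precisely the $j$-th elementary projector, with the scalar $e^{i\theta_j}$ coming from the degree-zero part $e^{i\mu^P(p_j)(u)}$ of $e^{i\omega(u)}$. This is essentially a residue/partial-fraction identity: $\frac{\det(u-z)}{(u-z)}$ evaluated on the eigenvalue $z=i\theta_j$ has only its $j$-th diagonal entry nonzero (each other entry contains the factor $(i\theta_j - i\theta_j)=0$), and that entry equals $\prod_{l\neq j}(i\theta_l-i\theta_j)=e_{p_j}(u)$, so the quotient is the projector $E_{jj}$. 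One should also note the higher-form parts of $\omega(u)$ at $p_j$ contribute nothing since $P(E)$ is a point there as far as the degree-zero piece is concerned, or more precisely the localization formula already accounts for them through $e_{p_j}(u)$.

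Finally, to make the argument self-contained I would recall the normalization $\frac{1}{(2i\pi)^{\dim P(E)}}\int_{P(E)}e^{i\omega(u)}=1$ (the abelian localization applied to the trivial coefficient, giving $\sum_j \frac{1}{\prod_{l\neq j} i(\theta_l-\theta_j)}\cdot 1$, which is $1$ by the standard partial-fraction identity — consistent with taking the trace of \eqref{Nelfor} and comparing with $\Tr e^{iu}=\sum_j e^{i\theta_j}$ when $\beta$ is replaced by its scalar analogue). Assembling these pieces gives \eqref{Nelfor} for diagonal $u$, and invariance extends it to all of $\mathfrak u$.
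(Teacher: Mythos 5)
Your proof is correct and follows essentially the same route as the paper's: reduce to (generic) diagonal $u$ by conjugation-invariance and analyticity, then apply Berline--Vergne localization, observing that $Q(u,i\theta_j)$ annihilates every diagonal entry except the $j$-th, which exactly cancels the Euler class at $p_j$. The only quibble is notational: the fixed-point scalar $e^{i\mu^P(p_j)(u)}$ equals $e^{i(i\theta_j)}=e^{-\theta_j}$, the $j$-th eigenvalue of $e^{iu}$, not $e^{i\theta_j}$ as written.
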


 \begin{proof}
Since the formula \eqref{Nelfor} is clearly equivariant under conjugation and analytic in $u$,  it is sufficient to prove it  when $u$ is a generic diagonal matrix. In this case, the formula follows right away from the localization formula of Berline-Vergne applied to the action of the torus $\exp(tu)$.
 Let us for example do the calculation for $u$ a $3\times 3$ matrix (the general case is identical)

 $$u=\left(
       \begin{array}{ccc}
         i\theta_1 & 0 & 0 \\
         0 & i\theta_2 & 0 \\
         0 & 0 & i\theta_3 \\
       \end{array}
     \right)$$

     Then $$Q(u,z)=\left(
       \begin{array}{ccc}
        (i\theta_2-z) (i\theta_3-z) & 0 & 0 \\
         0 &(i\theta_1-z) (i\theta_3-z)  & 0 \\
         0 & 0 & (i\theta_1-z) (i\theta_2-z) \\
       \end{array}
     \right)$$

To compute the integral of
 $e^{i\omega(u)} \beta(p,u)$ over $P(E)$, we can apply the localization theorem. At the point $p_k=\C e_k$, $\omega(u)$ restricts to $i\theta_k$, thus the first  diagonal entry of $Q(u,\omega(u))(p_k)$ are zero except for $p_1=\C e_1$,  which is $(i\theta_2-i\theta_1) (i\theta_3-i\theta_1)$.
  Thus by the localization formula, the first diagonal entry of
  the matrix $\int_{P(E)}e^{i\omega(u)} \beta(p,u)$ is just $e^{-\theta_1}$. The calculation is similar for all diagonal entries.

  \end{proof}

Using the fact that formula \ref{Nelfor} is analytic in $u$ we immediately deduce
\begin{corollary}\label{finalg}
Let $A$ be a finite dimensional commutative algebra over $\mathbb R$ and $u\in A\otimes_{\mathbb R}\mathfrak u$. Then
 \begin{equation}\label{NelforA}\int_{P(E)} e^{i\omega(u)} Q(u,\omega(u))=e^{iu}.\end{equation}
\end{corollary}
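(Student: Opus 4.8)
The plan is to observe that \eqref{Nelfor} is an identity of everywhere-convergent power series in $u$ with real coefficients, and that such an identity extends automatically to any finite-dimensional commutative $\R$-algebra.

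First I would fix a basis $X_1,\dots ,X_N$ of $\mathfrak u$ and write $u=\sum_j t_jX_j$, so that both sides of \eqref{Nelfor} become functions of $(t_1,\dots ,t_N)\in\R^N$ valued in $\End(E)$. The right-hand side $e^{iu}$ is visibly entire in $t$. For the left-hand side, at a point $p\in P(E)$ the equivariant form $\omega(u)=\mu^P(p)(u)+\omega$ is the sum of the scalar $\mu^P(p)(u)$, linear in $t$, and the $2$-form $\omega$, which is nilpotent; since a $0$-form is central in the algebra of differential forms, $e^{i\omega(u)}=e^{i\mu^P(p)(u)}e^{i\omega}$, a product of an entire function of $t$ by a polynomial of bounded degree in $\omega$. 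As recalled just before Theorem \ref{nelthe}, $Q(u,\omega(u))$ depends polynomially on $u$, hence on $t$. Multiplying, extracting the top-degree component and integrating over the compact manifold $P(E)$ (on which $\mu^P$ is bounded, so the termwise estimates are uniform) yields an $\End(E)$-valued entire function $\Phi(t)=\int_{P(E)}e^{i\omega(u)}Q(u,\omega(u))$ of $t\in\R^N$. Theorem \ref{nelthe} asserts $\Phi(t)=e^{i\sum_jt_jX_j}$ for all $t\in\R^N$; differentiating termwise at the origin shows that the two everywhere-convergent power series representing these functions have identical coefficients, say $\sum_\alpha c_\alpha t^\alpha$.

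Now let $A$ be a finite-dimensional commutative $\R$-algebra, equipped with a submultiplicative norm, and take $u=\sum_j a_jX_j\in A\otimes_\R\mathfrak u$ with $a_j\in A$. Substituting $t_j=a_j$, the bound $\sum_\alpha\|c_\alpha\|R^{|\alpha|}<\infty$, valid for every $R>0$ by entirety, shows that $\sum_\alpha c_\alpha a^\alpha$ converges absolutely in the finite-dimensional Banach algebra $A\otimes_\R\End(E)$. By absolute convergence the series may be rearranged freely: collecting the contributions coming from $e^{i\sum_jt_jX_j}$ one gets the exponential series of $iu$, that is $e^{iu}$; collecting the contributions coming from $\Phi$ and interchanging the (now absolutely convergent) sum with the integral, one gets $\int_{P(E)}e^{i\omega(u)}Q(u,\omega(u))$, where $\omega(u)=\mu^P(u)+\omega$ is interpreted with $\mu^P(u)$ taking values in $A$. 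Equating the two sides gives \eqref{NelforA}.

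The only step that needs genuine care is this last one — the absolute convergence in $A\otimes_\R\End(E)$ and the interchange of summation with integration over $P(E)$; everything preceding it is formal. This level of generality is exactly what is used in Subsection \ref{chern}, where $A=\bigwedge^{\mathrm{even}}T_n^*N$ is the (commutative, finite-dimensional) algebra of even-degree forms at a point and $u=j(x)+F$ is the equivariant curvature.
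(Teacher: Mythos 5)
Your proof is correct and follows the same route as the paper: expand $u=\sum_j t_jX_j$ in a basis of $\mathfrak u$, observe that both sides of \eqref{Nelfor} are everywhere-convergent power series in $t$ which agree for real $t$, hence have identical coefficients, and then substitute commuting elements of $A$. You simply make explicit the convergence and sum–integral interchange that the paper leaves implicit in the phrase ``coincide formally, so we can substitute any commuting values.''
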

\begin{proof}
We write $u=\sum_jx_jf_j$ where $f_j$ is a chosen basis for $\mathfrak u$. The two sides of the formula are power series in the variables $x_i$ which coincide for real values of the $x_i$ hence coincide formally and so we can substitute to the $x_i$ any commuting values.

 \end{proof}

\end{document}